\documentclass[12pt]{amsart}

\usepackage{amsfonts,amssymb,amsmath, graphicx,color,epic}
\usepackage{amsthm}

%%% o pacote para desenhos: 
\usepackage{pgf, tikz}

%%% O pacote pros postit
\usepackage[textsize=tiny]{todo}

%%% o pacote para ele deixar as figuras no lugar onde
%%% elas foram chamadas (elas nao ficam boiando)

\usepackage{float}
\usepackage[textwidth=15cm, left=3.3cm, top=3cm, bottom=3cm]{geometry}
%%%%%%%%%%%
\usepackage[font=small, margin=1cm]{caption}
%%% TEOREMAS, ETC.

\theoremstyle{plain}
\newtheorem{teo}{Theorem}
\newtheorem{prop}{Proposition}
\newtheorem{defin}{Definition}
\newtheorem{lem}{Lemma}
\newtheorem{corol}{Corollary}

\theoremstyle{definition}
\newtheorem{rem}{Remark}

%%% LETRAS, ETC.
\newcommand{\ds}{\displaystyle}
\newcommand{\cA}{\mathcal{A}}

\newcommand{\cC}{\mathcal{C}}

\newcommand{\cF}{\mathcal{F}}
\newcommand{\cG}{\mathcal{G}}

\newcommand{\cL}{\mathcal{L}}

\newcommand{\cP}{\mathcal{P}}

\newcommand{\cW}{\mathcal{W}}

\newcommand{\bR}{\mathbb{R}}

\newcommand{\bP}{\mathbb{P}}

\newcommand{\bN}{\mathbb{N}}

\newcommand{\bZ}{\mathbb{Z}}

\newcommand{\bA}{\mathsf{A}}

\newcommand{\tX}{\widetilde{X}}
\newcommand{\tY}{\widetilde{Y}}
\newcommand{\tx}{\widetilde{x}}

\newcommand{\tg}{\widetilde{g}}
\newcommand{\tA}{\widetilde{A}}
\newcommand{\tB}{\widetilde{B}}
\newcommand{\tC}{\widetilde{C}}
\newcommand{\tDelta}{\widetilde{\Delta}}

\newcommand{\ovA}{\overline{A}}
\newcommand{\ovB}{\overline{B}}
\newcommand{\ovC}{\overline{C}}
\newcommand{\ovx}{\overline{x}}

\newcommand{\ovX}{\overline{X}}
\newcommand{\ovY}{\overline{Y}}
\newcommand{\ovg}{\overline{g}}

\newcommand{\ovxi}{\overline{\xi}}
\newcommand{\ovomega}{\overline{\omega}}
\newcommand{\ovDelta}{\overline{\Delta}}

\newcommand{\dado}{\,\vert \,}
\newcommand{\Dado}{\bigm\vert}
\newcommand{\DDado}{\Bigm\vert}
%%%% OUTRAS FUNCOES

\newcommand{\txi}{\tilde{\xi}}
\newcommand{\tvarphi}{\widetilde{\varphi}}
\newcommand{\ovvarphi}{\overline{\varphi}}
\newcommand{\thh}{\tilde{h}}

%%%% a variacao:
\newcommand{\var}{\mathrm{var}}

\newcommand{\comeco}{\operatorname{\cC}}
\newcommand{\act}{\operatorname{\cA}}
\newcommand{\diam}{\operatorname{d}}

\newcommand{\palavra}{\operatorname{\Pi}}

\newcommand{\supp}{\operatorname{supp}}

\newcommand{\abs}[1]{\lvert #1 \rvert}
%%%% ``eh definido por"
\newcommand{\pordef}{:=}

%%ESCREVER EM NEGRITO
\newcommand{\grasA}[1]{#1}

\setlength{\parindent}{3pt}

\begin{document}
\title[Complete connections with
modified majority rules]{Uniqueness vs. non-uniqueness in 
complete connections with modified majority 
rules}

\begin{abstract}
We take a closer look at a class of chains with complete connections
inspired by the one of Berger,
Hoffman and Sidoravicius \cite{BHS}. Besides giving a sharper description of
the uniqueness and non-uniqueness regimes, we show that if the pure majority
rule used to fix the dependence on the past is replaced with a function that is
Lipschitz at the origin, then uniqueness always holds, even with arbitrarily slow
decaying variation.
\end{abstract}

\author{J. C. A. Dias}
\address{Departamento de Matem\'atica, Universidade Federal de
Ouro Preto, Morro do Cruzeiro, CEP 35400-000, Ouro Preto, Brasil}
\curraddr{Departamento de Matem\'atica, Universidade Federal de
Minas Gerais, Av. Ant\^onio Carlos 6627, C.P. 702 CEP 30123-970, 
Belo Horizonte, Brasil}

\author{S. Friedli}
\address{Departamento de Matem\'atica, Universidade Federal de
Minas Gerais, Av. Ant\^onio Carlos 6627, C.P. 702 CEP 30123-970, 
Belo Horizonte, Brasil}

\keywords{Chains with complete connections, long-range interactions, $g$-function, Markov chain, phase
transition, non-uniqueness, majority rule}
\thanks{J.C.A.D. was partially supported by CAPES/REUNI}
\maketitle

\section{Introduction}

We consider stationary stochastic processes on $\bZ$, 
\[\dots,Z_{-2},Z_{-1},Z_0,Z_1,Z_2,\dots\]
where each $Z_t$, $t\in \bZ$, is a
symbol taking values in a finite alphabet $\bA$. The processes
we consider are called \grasA{chains with complete
connections} (Doeblin and Fortet \cite{DoeblinFortet}), 
due to a dependence on the past of the following form. 
Assume some measurable map 
$g:\bA\times \bA^{\bN}\to [0,1]$ is given a priori,
called \grasA{g-function}, and that 
for all $t$, all $z_{t}\in \bA$,
\begin{equation}\label{eq_propr_g_process}
P(Z_{t}=z_{t}|Z_{t-1}=z_{t-1},Z_{t-2}=z_{t-2},\dots)
=g(z_{t}|z_{t-1},z_{t-2},\dots)\quad\text{ a.s.}
\end{equation}
A processes 
$Z=(Z_t)_{t\in \bZ}$ satisfying \eqref{eq_propr_g_process} 
is said to be \grasA{specified by} $g$. 
The role played by $g$ for $Z$ is therefore analogous to a transition 
kernel for a discrete time Markov process, 
except that it allows dependencies on the whole past of the
process.\\

We will always assume that $g$ is \grasA{regular},
which means that it satisfies the following two conditions.
\begin{enumerate}
 \item It is uniformly bounded away from $0$ and $1$: there exists
$\eta>0$ such that
$\eta\leq g(z_0|z)\leq
1-\eta$ for all $z_0\in \bA$, $z\in \bA^{\bN}$.
\item Define the \grasA{variation of $g$ of order $j$} by
\[ \var_j(g)\pordef\sup |g(z_0|z)-g(z_0|z')|\,,\]
where the $\sup$ is over all $z_0\in \bA$, and over all
$z,z'\in \bA^\bN$ for which $z_{i}=z'_{i}$ for all $ 1\leq i \leq j$.
Then $g$ is \grasA{continuous} in the sense that $\var_j(g)\to 0$
when $j\to\infty$.
\end{enumerate}

When $g$ is regular, the existence of at least one stationary process
specified by $g$ follows by a standard compactness
argument (see also the explicit construction given below). 
Once existence is guaranteed, uniqueness can be shown 
under additional assumptions on the speed at which
$\var_j(g)\to 0$. For instance, Doeblin and Fortet
\cite{DoeblinFortet} showed that if
\[\sum_j\var_j(g)<\infty\,,\] 
then there exists a unique process specified by $g$.
More recently, Johansson and \"Oberg \cite{JohanssonOberg} 
strengthned this result, showing that uniqueness holds as soon
as 
\begin{equation}\label{eq:critsuecos}
\sum_j\var_j(g)^2<\infty\,.\end{equation}

An interesting and natural question is to determine if a given
regular $g$-function can lead to a phase transition, that is if it 
specifies at least two \emph{distinct} processes.\\

In a pioneering paper, Bramson and Kalikow \cite{BrKa} 
gave the first example of a regular $g$-function exhibiting a phase
transition. More recently, Berger, Hoffman and Sidoravicius
\cite{BHS}, in a remarkable paper, introduced a
model whose $g$-function also exhibits a phase
transition, but whose variation has a summability that can be made
arbitrarily close to the $\ell^2$-summability of 
the Johansson-\"Oberg criterion (see Remark \ref{rem:variacao} below).\\

The $g$-functions constructed in \cite{BrKa} and \cite{BHS}
have common features. The main one is that they both rely on some 
\emph{majority rule} used in order to fix  
the influence of the past on the probability
distribution of the present. That is, 
$Z_{t+1}$, given $(Z_s)_{s\leq t}$, is
determined by the \emph{sign} (and not the true value) 
of the average of a subset of the variables
$(Z_s)_{s\leq t}$ over a large finite region.
This feature is essential in the mechanisms that lead to
non-uniqueness, since it allows (roughly speaking) 
small local fluctuations to have dramatic effects in the remote
future, thus favorizing the transmission of information from $-\infty$ to
$+\infty$.\\

For the Bramson-Kalikow model, it had already been observed in \cite{Friedli} that
arbitrarily small changes in the behavior of the majority rule,
turning it smooth at the origin,
can have important consequences on uniqueness/non-uniqueness of the
process.\\

In this paper, we give a closer look at a class of models
based on the one of 
Berger, Hoffman, and Sidoravicius (which will be called simply
the BHS-model hereafter). 

Beyond giving a sharper description of the original model of \cite{BHS}, 
our results show that any smoothing of the majority rule leads, under
general assumptions, to uniqueness, even for very slow-decaying variations.

\begin{figure}[H]
\begin{center}
%%%% EXPORT 100%
\input{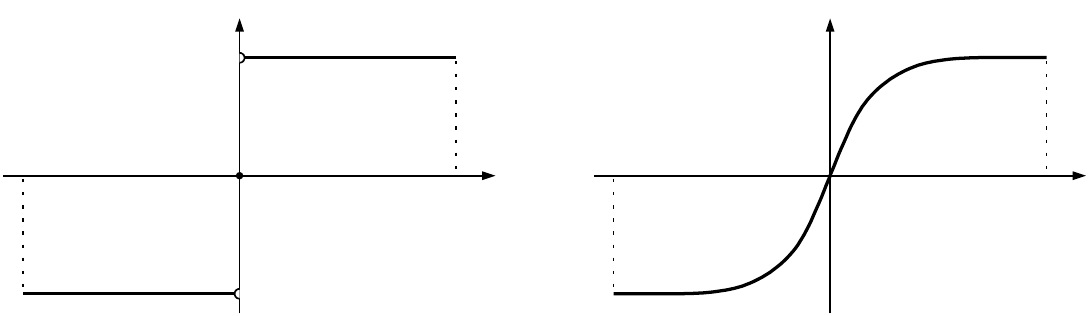_t}
\caption{On the left, the pure majority rule used in \cite{BHS}, for
which non-unicity holds when
$0<\alpha<\frac{1-\epsilon_*}{2}$. On
the right, a smoothed version, for
which the process is always unique for all $\alpha>0$, or more generally, 
for all sequence $h_k\searrow 0$.}
\label{imvarphi}
\end{center}
\end{figure}
We will present these models from scratch, 
and not assume any prior knowledge about \cite{BHS}.
Since their construction is not trivial and deserves some 
explanations, we will state our
results precisely only at the end of Section \ref{sec_BHS}.\\

Before proceeding, we single out other non-uniqueness-related works.
In \cite{Hulse2}, Hulse gave examples of non-uniqueness, based on the Bramson-Kalikow
approach.
In \cite{FM1}, Fern\'andez and Maillard constructed an example, using
a long-range spin system of statistical mechanics, 
although in a non-shift-invariant framework. 
In \cite{GGT}, Gallesco, Gallo and
Takahashi discussed the Bramson-Kalikow model under a different perspective. 

\subsection{Models considered}
Although the basic structure of our model is entirely imported from the one of
BHS, our notations and terminology differ largely from
those of \cite{BHS}.\\

The process $Z=(Z_t)_{t\in \bZ}$ 
defined in \cite{BHS} takes values in an
alphabet with four symbols, where each symbol is actually a pair, which we denote
\[Z_t=(X_t,\omega_t)\,,\]
with~\footnote{Often, we will abbreviate $+1$ (resp. $-1$) by $+$ (resp. $+$).}
$X_t\in \{+,-\}$, $\omega_t\in\{0,1\}$.  The process can be considered as
constructed in two steps. First, a doubly-infinite sequence of i.i.d. 
random variables $\omega=(\omega_t)_{t\in \bZ}$ is sampled, 
representing the \emph{environment}, with distribution $Q$:
\[
Q(\omega_t=1)=1-Q(\omega_t=0)=\tfrac12\,.
\]
Then, for a given environment $\omega$, a process 
$X=(X_t)_{t\in \bZ}$ is considered, whose conditional distribution
given $\omega$ is denoted $P_\omega$ and
called  the \emph{quenched distribution}. We will assume that $P_\omega$-almost surely,
\begin{equation}\label{eq:distr_cond_X}
P_\omega(X_t=\pm |X_{t-1}=x_{t-1},X_{t-2}=x_{t-2},\dots)
=\tfrac12
\bigl\{
1\pm \psi_t^\omega(x_{-\infty}^{t-1})
\bigr\}\,.
\end{equation}
where $x_{-\infty}^{t-1}=(x_{t-1},x_{t-2},\dots)\in \{\pm
\}^\bN$. 
The perturbation $\psi_t^\omega:\{\pm \}^\bN\to [-1,1]$ describes 
how the variables of the process $X$ differ
from those of an i.i.d. symmetric sequence (which corresponds to
$\psi_t^\omega\equiv 0$).
The quenched model will always be \emph{attractive}, in the sense that
$\psi_t^\omega(x_{-\infty}^{t-1})$ is non-decreasing in each of the
variables $x_s$, $s<t$.\\

We assume that the functions $\psi_t^\omega$ satisfy the following conditions:
\begin{itemize}
\item[C1.] For all $x\in \{\pm \}^\bN$,  
$\psi_t^\omega(x)$ depends only
on the environment variables $\omega_s$, with $s$ 
lying at and before time $t$.
\item[C2.] The functions are odd, $\psi_t^\omega(-x)=-\psi_t^\omega(x)$ for all 
$x\in \{\pm\}^\bN$, and bounded 
uniformly in all their arguments:
\[|\psi_t^\omega(x)|\leq \epsilon\quad \text{ for some }\epsilon\in (0,1)\,.\]
\item[C3.] The maps $(x,\omega)\mapsto\psi_t^\omega(x)$ are continuous, 
uniformly in $t$.
\item[C4.] If $\theta:\{0,1\}^\bZ\to\{0,1\}^\bZ$ denotes the shift, 
$(\theta
\omega)_s\pordef\omega_{s+1}$, then 
\[\psi_t^\omega=\psi_0^{\theta_{t}\omega}\,.\]
\end{itemize}

The probability distribution $\bP$ of the
joint process $Z_t=(X_t,\omega_t)$ is defined as follows. 
If $A\in \cF\pordef\sigma(X_t,t\in
\bZ)$, $B\in \cG\pordef \sigma(\omega_t,t\in \bZ)$, then
\begin{equation}\label{Joint_Prob_Measure}
\bP(A\times B)\pordef \int_B P_\omega(A)Q(d\omega)\,.
\end{equation}
We will sometimes denote $\bP$ by $Q\otimes P_\omega$.
It can then be verified that under $\bP$, $Z=(Z_t)_{t\in \bZ}$ is a chain with 
complete connections specified by the regular $g$-function
\begin{align}\label{defg}
g((\pm,\omega_t)|(x_{t-1},\omega_{t-1}),
(x_{t-2},\omega_{t-2}),\dots)&\pordef
\tfrac14\bigl\{1\pm\psi_t^{\omega}(x_{-\infty}^{t-1})\bigr\}\,.
\end{align}

{
Although the processes specified by $g$ are of a dynamical nature (the process
$(x_t,\omega_t)$ at time $t$ having a distribution fixed by the entire past), 
we will rather be working with the quenched picture in mind, and think only 
of the variables
$x_t$ as being dynamical, evolving in a fixed environment $(\omega_t)_{t\in \bZ}$.}\\

The precise definition of the functions
$\psi_t^\omega$ will be given in 
Section \ref{sec:defSpsi}. 
Before that we describe, in an informal way, the 
main ingredients that will appear in their construction.

\subsection{Sampling a random set in the past}
A natural feature of the model is that 
the distribution of the process $X$ at time $t$ 
is determined by its values over a finite (albeit large) region in the
past of $t$. Therefore, for a given environment $\omega$, 
the starting point will be to associate to each
time $t\in \bZ$ a random set $S_t=S_t^\omega$ living in the
{past} of $t$: $S_t\subset (-\infty,t)$.
We will say that $S_t$ {targets} the time $t$. 
Although each $S_t$ is either empty of finite, we will always have, $Q$-almost surely,
\[\sup_t|S_t|=\infty\,\quad\text{ and }\quad 
\sup_t\mathrm{ dist }(t,S_t)=\infty\,.\] 
In the environment $\omega$, 
the distribution of $X_t$ conditionned on its past $(X_s)_{s<t}$ (see
\eqref{eq:distr_cond_X})
is determined by the values of $X$ on $S_t$. 
As a matter of fact, the distribution of $X_t$ will depend on
the \emph{average} of $X$ on the set $S_t$:
\[
\psi_t^\omega(x_{-\infty}^{t-1})=
\text{ odd function of }
\Bigl(
\frac{1}{|S_t^\omega|}\sum_{s\in S_t^\omega}x_s
\Bigr)
\,.
\]
The precise dependence will be fixed by some \emph{majority rule}.

\begin{figure}[H]
\begin{center}
%EXPORTACAO: 100%
\input{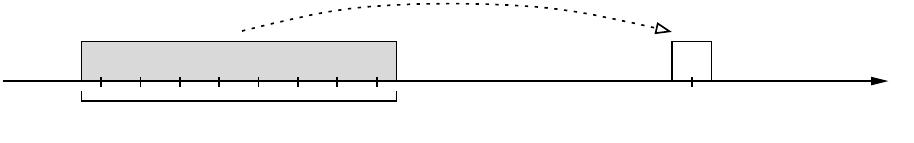_t}
\end{center} 
\caption{In a given environment $\omega$, the distribution of
$X_t$, conditioned on its past, 
is determined by the variables $X_s$, with
$s\in S_t^\omega\subset (-\infty,t)$.
}
\label{fig_setS}
\end{figure}

\begin{rem}
In general, $S_t^\omega$ will \emph{not} be an interval; as will be seen,
$S_t^\omega$ is defined by a multiscale description of $\omega$, linking far apart
intervals in a non-trivial fashion.
Nevertheless, we will simplify the figures by picturing $S_t$ as if it were an interval.
\end{rem}

The sets  $S_t$ will be constructed in such a way that 
the following event occurs with positive $Q$-probability:

\begin{figure}[H]
\begin{center}
%EXPORTACAO: 75%
\input{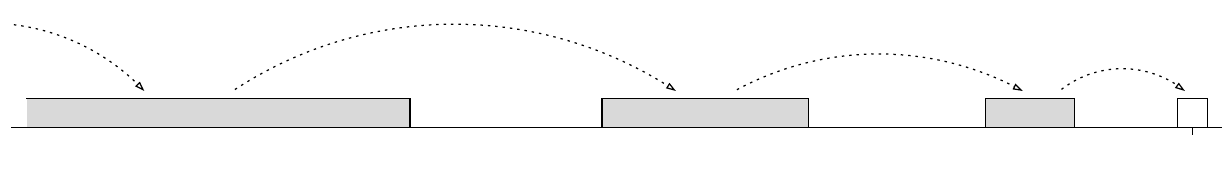_t}
\end{center} 
\caption{An environment in which information is likely to
travel from the remote past up to $0$.}
\label{fig_setSvarios}
\end{figure}

The event depicted represents a global connectivity satisfied
by the sets $(S_t)_{t\in \bZ}$ 
in relation to the origin:
$0$ is targeted by the set $S_0$, which we temporarily denote by $S_0(1)$.
In turn, all points $s\in S_0(1)$ happen to be targeted by the
\emph{same} set, denoted $S_0(2)$. Then, all points $s'\in
S_0(2)$ are targeted by the \emph{same} set
$S_0(3)$, etc.
In this way, for each $j\geq 0$ the variables $\{X_s,s\in S_0(j)\}$, when conditionned on the 
values of the process on the past of $S_0(j)$, are
independent, with a distribution fixed solely by the magnetization
of $X$ on $S_0(j+1)$. In this way, the properties of $X$ in a finite
region of $\bZ$ will be obtained via values of $X$ on a sequence of sets $S_0(j)$,
$j=1,2,\dots$. This sequence will happen to be 
\emph{multiscale} in the sense that $S_0(j+1)$ will
be orders of magnitude larger than $S_0(j)$. Part of the mechanism will be to obtain estimates
on the sizes of these sets. (Obs: The notations of this paragraph will not be used
later. For a precise description of the picture just
described, see the definition of the event $\{\infty\to k\}$ in Section
\ref{subsec:eventoperco}).\\

In general, $|A|$ will denote the number of elements of $A$.
For simplicity, intervals of $\bZ$ will be denoted as
\[\{a,a+1,\dots,b-1,b\}\equiv [a,b]\,.\] 
The \grasA{diameter} of $[a,b]$ is $\diam([a,b])\pordef b-a+1$.
Throughout the paper, 
most objects are random and depend on $\omega$, although
this will not always be indicated in the notations.

\section{The BHS model}\label{sec_BHS}

The construction of the random sets $S_t$ starts by using the 
environment $\omega$ to partition $\bZ$ into blocks of
increasing scales.\\

We start by fixing two numbers:
\[
\epsilon_*\in(0,1)\,,\quad\text{ and }\quad k_*\in \bN\,.
\]
Later, $k_*$ (the smallest scale) will be chosen large. 
For all $k\geq k_*$, define
\[\ell_k\pordef \lceil(1+\epsilon_*)^k\rceil\,,\]
and let $I_k$ be the word defined 
as the concatenation of $\ell_k -1$ symbols 
``$1$'' followed by a symbol ``$0$'': 
\begin{equation}\label{defIk}
I_k=(1,1,\cdots ,1,1,0)\,.
\end{equation}

Let $\omega\in \{0,1\}^\bZ$ be an environment and $[a,b]\subset \bZ$ an
interval of diameter $\ell_k$. 
We say that \grasA{$I_k$ is seen in
$\omega$ on $[a,b]$} if  
\[(\omega_a, \omega_{a+1}, \cdots, \omega_b)=I_k.\]

In a given environment, $I_k$ is seen on infinitely
many disjoint intervals ($Q$-a.s.). Consider 
two successive occurrences of $I_k$ in $\omega$. That is, 
suppose $I_k$ is seen on two disjoint intervals
$[a,b]$ and $[a',b']$, but not on any 
other interval contained in $[a,b']$. Then the 
interval $[b,b'-1]$ is called a \grasA{$k$-block}. By definition, a
$k$-block has diameter at least $\ell_k$, the first
symbol seen on a $k$-block is $0$, and the last $\ell_k-1$ symbols are
$1$s.\\

A given $k$ allows to partition $\bZ$ into
$k$-blocks: for each $t\in \bZ$, there exists a unique
$k$-block containing $t$, denoted by $B^k(t)=[a^k(t),b^k(t)]$, 
where $a^k(t)$ (resp. $b^k(t)$) is the leftmost (resp. rightmost) point of 
$B^k(t)$.

\begin{figure}[H]
 \begin{center}
%EXPORTACO.: 57%!
 \input{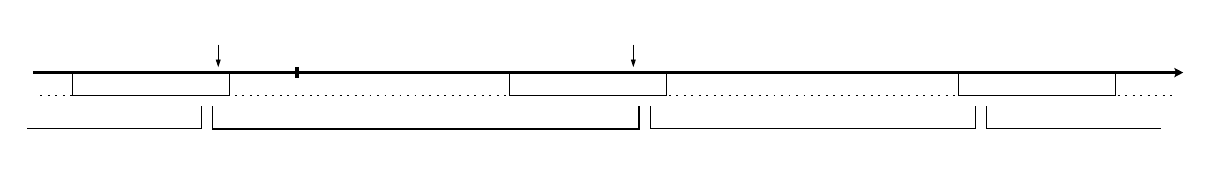_t}
\end{center}
\caption{A  partition of $\bZ$ into $k$-blocks, using
successive occurences of $I_k$ in $\omega$.} 
\label{fig_bloco_simples}
\end{figure}

The diameter of a typical $k$-block is of order
(see Lemma \ref{lematk})
\[\beta_k\pordef 2^{\ell_k}.\] 
In a fixed environment, the partition of $\bZ$ in $k$-blocks is coarser than 
the partition in $(k-1)$-blocks: when $k>k_*$, each 
$k$-block $B$ is a disjoint union of one or more
$(k-1)$-blocks. If we denote the number of $(k-1)$-blocks in
$B$ by $N(B)$, then
\begin{equation}\label{decombloco}
B=b_1\cup b_2\cup \cdots\cup 
b_{N(B)}\equiv\bigcup^{N(B)}_{i=1}b_i,
\end{equation}
where $b_1$ (resp. $b_{N(B)}$) is 
the leftmost (resp. rightmost) $(k-1)$-block contained in $B$.
We will verify in Lemma \ref{lemapik} that $N(B)$ is of order 
\[\nu_k\pordef \frac{\beta_k}{\beta_{k-1}}.\]

When $k>k_*$, the \grasA{beginning of} a $k$-block $B$, decomposed as in
\eqref{decombloco}, is defined as  
\begin{equation}\label{eq_def_begin}
\comeco 
(B)\pordef \bigcup^{N(B)\wedge\lfloor\nu_k^{1-\epsilon_*}\rfloor 
}_{i=1}b_i.
\end{equation}
Due to the exponent ``$1-\epsilon_*$'' in
\eqref{eq_def_begin}, the beginning of a $k$-block, 
when $k>k_*$ is large, is typically smaller than the block
itself (see Lemma \ref{lemma_beginning}):

\begin{center}
%EXPORTACO.: 100%!
\input{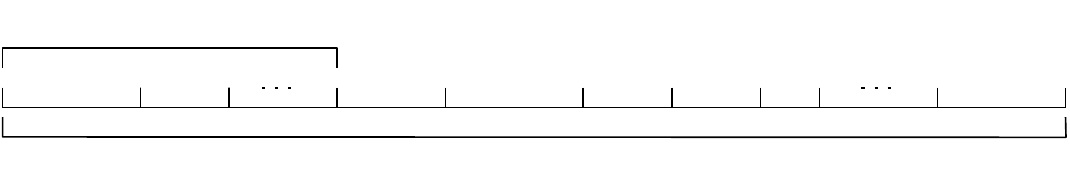_t}
\end{center} 

For a $k_*$-block $B=[a,b]$, the beginning is defined in a different manner:
\[\comeco(B)\pordef \bigl\{s\in B:|s-a|\leq
\beta_{k_*}^{1+\epsilon_*}\bigr\}.\]
Since the typical size of a $k_*$-block $B$ 
is $\beta_{k_*}$, we will verify later that 
$B=\comeco(B)$ with high $Q$-probability. 

\subsection{The definition of $S_t^\omega$ and $\psi_t^\omega$}\label{sec:defSpsi}

In order to help understand the precise definition of $S_t$ given below,
we first give a possible definition which is natural but which is not yet sufficient
for our needs.\\

Fix $t\in \bZ$, and consider the
first scale for which $t$ is not in the beginning of its block:
\[
k_t\pordef \inf \{k\geq k_*:t\not\in
\comeco(B^k(t))\}\,.
\]
Then, a natural way of defining $S_t$ could be
\[
S_t\pordef \comeco(B^{k_t}(t))\,.
\]
Unfortunately, this definition does not guarantee that some event like the one
described after Figure
\ref{fig_setSvarios} occurs with positive probability. Namely, 
two distinct points $t',t''\in S_t$ can very well be targeted by different sets $S_{t'}\neq
S_{t''}$.
The definitions of $S_t$ and $k_t$ thus need to be modified in some subtle way.
\begin{defin}\label{defativos} Let $k\geq k_*$. We say that $t\in \bZ$ is 
\grasA{$k$-active} in the environment 
$\omega$ if for all 
$j\in\{k_*,\cdots, k\}$,
\begin{enumerate}
\item $t \in \comeco(B^j(t))$, where $B^j(t)=[a^j(t),b^j(t)]$ is the
$j$-block containing $t$, and if
\item $|t-a^j(t)|<\beta_{j+1}.$
\end{enumerate}
Let also $\act_{k}\pordef\{t\in \bZ:t\text{ is $k$-active}\}$.
\end{defin}

Observe that
\[\act_{k_*}\supset \act_{k_*+1}\supset\dots\supset
\act_{k}\supset \act_{k+1}\supset\dots\] 
We will see after Lemma \ref{lemma_beginning}
that $\act_{k}\searrow \varnothing$ as $k\to
\infty$, $Q$-almost surely. 
Therefore, it is natural to define, for $t\in \bZ$,
\begin{equation} \label{defkt}
k_t=k_t^\omega \pordef \inf\{k\geq k_*; 
t\notin\act_{k}\}\,,
\end{equation}
with the convention: $\inf \varnothing=\infty$ . 
The set of \grasA{$k$-active points inside a $k$-block} $B$ is
\[\act(B)\pordef\act_k\cap B\,.\]
By definition, $\act(B)\neq \varnothing$, and $\act(B)\subset \comeco(B)$.
Then, let~\footnote{Here, a difference with \cite{BHS}: we don't
require $|S_t|$ to be odd.}
\[S_t =S_t^\omega \pordef \begin{cases} \act(B^{k_t}(t)) & 
\text{if } 
k_*<k_t<\infty\\ \varnothing &\text{otherwise}.\end{cases} \]

By definition, $S_t\subset (-\infty,t)$, and the two
following crucial properties hold:
\begin{enumerate}
\item[P1.]\label{it_lemomega} If $t',t''\in S_t$, then $S_{t'}=S_{t''}$.
\item[P2.]\label{it_lemomega2} If $\omega, \omega'$
are such that $\omega_s=\omega'_s$ for all $s\in (\infty, t]$, 
then $k_t^\omega=k_t^{\omega'}$ and $S_t^\omega=S_t^{\omega'}$.
\end{enumerate}
We can now define $\psi_t^\omega$.

\begin{defin}\label{DefPsi}
Let $\varphi:[-1,1]\to[-1,1]$ be non-decreasing and 
odd, $\varphi(-z)=-\varphi(z)$, and $h_{k}> 0$ be 
a decreasing sequence such that $h_{k}\searrow 0$ when $k\to\infty$. 
If $S_t\neq \varnothing$ and $|t-a^{k_t}(t)|< \beta_{k_t+ 1}$,
define
\begin{equation}\label{defpsi2}
 \psi_t^\omega(x)\pordef
h_{k_t}\varphi\Bigl(\ds
\frac{1}{|S_t|}\sum_{s\in S_t}x_s\Bigr)\,.
\end{equation}
Otherwise,
\begin{equation}\label{defpsi1}
\psi_t^\omega(x)\pordef 0\,.
\end{equation}
\end{defin}

We check that $\psi_t^\omega$ satisfies the properties C$1$-C$4$ described
earlier.
If $h_{k_*}$ is small enough, say $h_{k_*}\leq \tfrac12$, 
then $\psi_t^\omega$ satisfies C$2$.
C$3$ is guaranteed by the fact that $h_{k}\searrow 0$ and that a cutoff was introduced so that 
$\psi_t^\omega=0$ if $|t-a^{k_t}|\geq \beta_{k_t}+1$.
Then,  C$4$ is clearly satisfied, and C$1$ is consequence of P$2$.\\

We will now present some results concerning the processes $Z$
specified by the $g$-function defined in \eqref{defg}, with
$\psi_t^\omega$ defined above.
Our interest will be in observing the role played by
the behavior of $\varphi$ at the origin.

\subsection{A sharper result for the pure majority rule}

In \cite{BHS}, the function $\varphi$ used is a \grasA{pure
majority rule} (see Figure \ref{imvarphi}). 
That is,
\begin{equation}\label{defphibhs}
\varphi_{PMR}(z)\pordef
\begin{cases} 
+1&\text{if } z \in(0,+1]\,,\\ 
0& \text{if } z= 0\,,\\
-1& \text{if } z\in [-1,0)\,.\\
\end{cases}
\end{equation}

The behavior of the model then depends crucially on the choice of the sequence $h_k$.
As will be seen (see \eqref{eq:seriealphaconv} and \eqref{eq:seriealphadiv}), the
criterion is roughly the following: 
\begin{equation}\label{eq:criterium}
\sum_{k}e^{-h_{k+1}^2\beta_k^{1-\epsilon_*}}
\begin{cases}
<\infty & \Rightarrow \text{non-uniqueness}\,,\\
=\infty & \Rightarrow \text{uniqueness}\,.\\
\end{cases}
\end{equation}

The sequence $h_{k}$ considered in \cite{BHS} was therefore of the form
\begin{equation}\label{defhk}
h_{k}\pordef\frac{1}{\beta_{k-1}^{\alpha}}\,,\quad \alpha>0\,.
\end{equation}

With this particular choice, our first result completes the description 
given in \cite{BHS}:

\begin{teo}\label{teo1} Consider the $g$-function 
\eqref{defg}, with $\varphi$ discontinuous at the origin like $\varphi_{PMR}$, and 
$h_{k}$ defined as in \eqref{defhk}.
\begin{enumerate}
\item\label{it1} If $\alpha< \frac{1-\epsilon_*}{2}$, 
then there exist two distinct stationary processes
$Z^+\neq Z^-$ specified by $g$.
\item\label{it2} If $\alpha>\frac{1-\epsilon_*}{2}$, then 
there exists a unique stationary process specified by $g$.
\end{enumerate}
\end{teo}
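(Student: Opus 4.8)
The plan is to deduce both items from the general dichotomy \eqref{eq:criterium}, so that Theorem~\ref{teo1} becomes a bookkeeping computation once that criterion is in place. Inserting $h_{k+1}=\beta_k^{-\alpha}$ from \eqref{defhk} into the exponent gives
\[
h_{k+1}^2\,\beta_k^{1-\epsilon_*}=\beta_k^{\,1-\epsilon_*-2\alpha}\,,
\]
and since $\beta_k=2^{\ell_k}$ grows doubly exponentially (Lemma~\ref{lematk}), the summand $e^{-\beta_k^{1-\epsilon_*-2\alpha}}$ tends to $0$ super-fast when $1-\epsilon_*-2\alpha>0$ and tends to $1$ when $1-\epsilon_*-2\alpha<0$. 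Hence the series converges exactly when $\alpha<\frac{1-\epsilon_*}{2}$ and diverges exactly when $\alpha>\frac{1-\epsilon_*}{2}$, which matches the two regimes of the statement. The real content is therefore to establish \eqref{eq:criterium} itself.

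For item~\ref{it1} (non-uniqueness) I would construct $Z^+$ and $Z^-$ as quenched limits obtained by imposing, respectively, all-$+$ and all-$-$ symbols in a receding window of the remote past. Attractiveness (the monotonicity of $\psi_t^\omega$ in each $x_s$ postulated after \eqref{eq:distr_cond_X}) guarantees that these limits exist and are stochastically ordered, $X^+\succeq X^-$, so the two processes are distinct iff $\bE[X_0^+-X_0^-]>0$. By the structure of $g$ this reduces to showing that the connectivity event $\{\infty\to 0\}$ of Section~\ref{subsec:eventoperco} has positive $Q$-probability: on this event the boundary condition is transmitted through the nested sets $S_0(1)\subset S_0(2)\subset\cdots$, property P1 ensuring that at each scale the relevant active spins are conditionally i.i.d.\ with bias $\pm h_k$. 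The survival probability through scale $k$, given survival to scale $k-1$, is then bounded below by $1-Ce^{-ch_k^2 M_k}$ by a one-sided large-deviation estimate for the biased majority over the $M_k\asymp\beta_{k-1}^{1-\epsilon_*}$ effectively independent spins in the beginning of the block (whose size is controlled by Lemmas~\ref{lemapik} and~\ref{lemma_beginning}). Convergence of $\sum_k e^{-ch_k^2 M_k}$ makes the infinite product of these factors positive, whence $Q(\infty\to 0)>0$.

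For item~\ref{it2} (uniqueness) I would run the converse: when the series diverges, a second Borel--Cantelli argument shows that $Q$-almost surely infinitely many scales $k$ are ``bad'', meaning that the majority over the block's beginning points against the incoming bias and thereby resets the sign essentially independently of the deeper past. Infinitely many such resets wash out any boundary condition, so $X^+$ and $X^-$ have the same law and the quenched chain has a unique stationary measure; integrating against $Q$ via $\bP=Q\otimes P_\omega$ then yields uniqueness of $Z$.

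The main obstacle is the multiscale estimate underlying \eqref{eq:criterium}: one must produce uniform two-sided large-deviation bounds $e^{\mp ch_k^2 M_k}$ for the biased majority while simultaneously controlling the random block sizes $\beta_k,\nu_k,N(B)$ (Lemmas~\ref{lematk}, \ref{lemapik}, \ref{lemma_beginning}) and the fact that $S_t$ is \emph{not} an interval, and while propagating the conditional-independence recursion furnished by P1 across scales. Converting these per-scale estimates into a statement about the infinite product (for item~\ref{it1}), and securing enough cross-scale independence to justify the divergence half of Borel--Cantelli (for item~\ref{it2}), is the delicate step; the identification $M_k\asymp\beta_{k-1}^{1-\epsilon_*}$, which pins the threshold at $\frac{1-\epsilon_*}{2}$, is precisely where the exponent $1-\epsilon_*$ of \eqref{eq_def_begin} enters.
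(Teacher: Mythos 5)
Your overall architecture (the threshold computation from \eqref{eq:criterium}, the $\pm$ boundary conditions and attractiveness, per-scale large deviations for the biased majority, an infinite product for non-uniqueness and a divergent series for uniqueness) is the same as the paper's. But there is a genuine gap in how you combine the two layers of randomness, and it appears in both items. The event $\{\infty\to k\}$ is an event about the \emph{environment} only: its $Q$-probability is controlled by the block estimates (Proposition \ref{lemaperc}, via \eqref{eqkvisivel}) and tends to $1$ as $k\to\infty$ \emph{for every} $\alpha$ and every sequence $h_k$; it has nothing to do with the convergence of $\sum_k e^{-c h_k^2 M_k}$. Your sentence ``Convergence of $\sum_k e^{-ch_k^2 M_k}$ makes the infinite product of these factors positive, whence $Q(\infty\to 0)>0$'' therefore derives the wrong conclusion from the right estimate: that product controls the \emph{quenched} probability $P_\omega^{+}(\xi_{k'}>0)$ \emph{on} the event $\{\infty\to k'\}$ (Proposition \ref{propnonuniq} and Lemma \ref{lemaJomega}), and the two bounds must then be combined by integrating against $Q$ through $\bP=Q\otimes P_\omega$, as in the paper's proof of item \eqref{it1}, to get $\bP(\xi^+(k)>0)\geq (1-\epsilon(k))\,Q(\infty\to k)>\tfrac12$.

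The same confusion undermines your item \eqref{it2}: ``$Q$-almost surely infinitely many scales $k$ are bad'' misplaces the randomness. The sign resets are events for the process $X$ under $P_\omega$ along the non-homogeneous Markov chain $\xi_k$, in a \emph{fixed} ($Q$-typical) environment; the correct mechanism is not a Borel--Cantelli argument over $Q$ but the product of conditional probabilities $\prod_{k=k'}^{M-1}P_\omega(\sigma_k=+\mid\sigma_{k+1}=+)\leq\exp\{-c_0\sum_k e^{-2h_{k+1}^2 n_2(k)}\}\to 0$, which requires a \emph{lower} bound on the flip probability (the Stirling estimate in Lemma \ref{lemaJomega}) and hence the upper bound $n_2(k)$ on $|\act(B^k(0))|$ from the definition of a good block --- a point your sketch does not isolate. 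Finally, ``infinitely many resets wash out the boundary condition'' only gives $E_\omega[\xi_{k'}\mid\xi_M]\to 0$; to conclude uniqueness you still need the oddness argument extending this to $E_\omega^{\pm}[X_t]=0$ for every $t$, and the criterion of Appendix \ref{app:unicidade} showing that this forces \emph{every} measure specified by $g$ (not only $P_\omega^+$ and $P_\omega^-$) to coincide with them.
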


The methods presented below don't allow to treat the critical case
$\alpha=\frac{1-\epsilon_*}{2}$.
Item \eqref{it1} was the main result of \cite{BHS} but there, the uniqueness 
regime was not studied.

\begin{rem}\label{rem:variacao}
It can be shown (see Appendix \ref{App:variation}) 
that with $h_{k}$ as in \eqref{defhk}, the variation of $g$ satisfies
\[\var_j(g)\leq 
c\cdot j^{-\tfrac{\alpha}{(1+\epsilon_*)^3}}\,,
\]
Therefore, the Johansson-\"Oberg
criterion \eqref{eq:critsuecos} guarantees unicity when 
$\alpha>\frac{(1+\epsilon_*)^3}{2}$.
Our result extends unicity also to values
$\alpha\in(\frac{1-\epsilon_*}{2},\frac{(1+\epsilon_*)^3}{2}]$.
\end{rem}

\subsection{Uniqueness for continuous majority rules}

The following two results show, roughly, that any attempt to turn $\varphi$ smooth at
the origin leads to uniqueness.

\begin{teo}\label{teo2meio} 
Consider the $g$-function \eqref{defg}, with $\varphi$ Lipshitz at the origin:
\begin{equation}\label{eq:conditlimsup}
0\leq  \liminf_{z\to 0^-}\frac{\varphi(z)}{z}
=\limsup_{z\to 0^+}\frac{\varphi(z)}{z}<\infty\,.
\end{equation}
If $h_{k}$ is as in \eqref{defhk}, then for all $\alpha>0$ the
stationary process specified by $g$ is unique.
\end{teo}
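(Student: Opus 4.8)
The plan is to reduce the claim to a quenched statement and prove it by a monotone coupling combined with a disagreement-counting estimate along the hierarchy of sets supplied by property P1. First, observe that the environment marginal of any process specified by \eqref{defg} is forced to be $Q$: summing \eqref{defg} over $X_t\in\{+,-\}$ gives $\tfrac12$ irrespective of the past, so $\omega$ is i.i.d. Hence it suffices to show that, for $Q$-almost every $\omega$, the quenched kernel \eqref{eq:distr_cond_X} admits a unique measure. Since that kernel is attractive, there are a maximal and a minimal quenched measure $\mu_\omega^+\ge\mu_\omega^-$, every quenched measure is sandwiched between them, and they can be built simultaneously by the monotone coupling that updates $X_t^+$ and $X_t^-$ with a single uniform $U_t$, giving $X_t^+\ge X_t^-$ for all $t$; uniqueness is then equivalent to $\widehat P(X_t^+\neq X_t^-)=0$ for every $t$, where $\widehat P$ is the coupling. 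I take $k_*$ as large as the construction allows. Writing $L$ for the finite slope in \eqref{eq:conditlimsup}, the condition says precisely that $\varphi$ has a genuine derivative $L$ at $0$, so that a small change of the magnetization produces a change of the bias of comparable size $\approx L\cdot(\text{change})$ --- in sharp contrast with $\varphi_{PMR}$, for which a change of magnetization across $0$ produces a jump of size $2$. This is the mechanism behind uniqueness, and the rest of the proof makes it quantitative.

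Fix the target $0$ and assume $S_0\neq\varnothing$ (otherwise $\psi_0^\omega=0$ and $X_0^+=X_0^-$ under the common noise). Since the sets $S_t^\omega$ depend only on $\omega$, both coupled processes see the same hierarchy, and property P1 produces a sequence $S_0=:S_0(1),S_0(2),S_0(3),\dots$ in which every site of $S_0(j)$ is targeted by the \emph{same} set $S_0(j+1)$; moreover the associated scales strictly increase, so that $|S_0(j)|$ grows doubly exponentially (of order $\beta_{k^{(j)}}^{1-\epsilon_*}$, by the estimates behind Lemma \ref{lemma_beginning}). Let $D_j$ be the number of sites $s\in S_0(j)$ with $X_s^+\neq X_s^-$. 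The common noise makes agreement absorbing: if $D_{j+1}=0$ then the two magnetizations on $S_0(j+1)$ coincide, hence $\psi_s^\omega$ is identical for both processes at every $s\in S_0(j)$ and $D_j=0$. Quantitatively, $s\in S_0(j)$ disagrees with conditional probability $\tfrac12 h_{k^{(j+1)}}\bigl(\varphi(m_{j+1}^+)-\varphi(m_{j+1}^-)\bigr)$, where $m_{j+1}^\pm$ is the magnetization of $X^\pm$ on $S_0(j+1)$ and $m_{j+1}^+-m_{j+1}^-=2D_{j+1}/|S_0(j+1)|$. Using the near-origin linearity of $\varphi$ to bound $\varphi(m_{j+1}^+)-\varphi(m_{j+1}^-)$ by (essentially) $L\,(m_{j+1}^+-m_{j+1}^-)$ and $h_{k^{(j+1)}}\le h_{k_*}$ one gets
\[
\widehat E[D_j\mid\text{past of }S_0(j)]\le \frac{|S_0(j)|}{|S_0(j+1)|}\,h_{k_*}L\,D_{j+1}=:\frac{|S_0(j)|}{|S_0(j+1)|}\,\rho\,D_{j+1},
\]
with $\rho=h_{k_*}L<1$ once $k_*$ is large. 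Telescoping from the boundary depth $J$ down to $0$ (a single site), the size ratios collapse to $1/|S_0(J)|$, and $D_J\le|S_0(J)|$ yields $\widehat E[D_0]\le\rho^{J}$. Letting $J\to\infty$ forces $\widehat P(X_0^+\neq X_0^-)=\widehat E[D_0]=0$.

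The main obstacle is that \eqref{eq:conditlimsup} controls $\varphi$ only near the origin, so two points in the previous paragraph need care. First, the linearization $\varphi(m^+)-\varphi(m^-)\lesssim L(m^+-m^-)$ is legitimate only while the magnetizations stay in a window $[-r_0,r_0]$ on which $\varphi$ is close to its tangent $Lz$; outside this window $\varphi$ is uncontrolled and possibly steep. I would confine the argument to the event that all intermediate magnetizations lie in $[-r_0,r_0]$ and treat its complement separately. Conditionally on the past of $S_0(j)$ the spins on $S_0(j)$ are independent with biases at most $h_{k_*}$, so Hoeffding gives $\widehat P(|m_j^\pm|>2h_{k_*})\le 2\exp(-2|S_0(j)|h_{k_*}^2)$; because $|S_0(j)|\gtrsim\beta_{k^{(j)}}^{1-\epsilon_*}$ grows doubly exponentially these probabilities are summable, and Borel--Cantelli produces an almost surely finite depth $J^\ast$ beyond which every magnetization stays in the small regime, so that the contraction above holds without error for all depths $\ge J^\ast$. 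One then runs the telescoping down only to $J^\ast$, obtaining $\widehat E[D_{J^\ast}]\le\rho^{\,J-J^\ast}|S_0(J^\ast)|\to0$, and splits on $\{J^\ast\le M\}$, letting first $J\to\infty$ and then $M\to\infty$ to conclude $\widehat P(X_0^+\neq X_0^-)=0$. The second, subtler point is that even inside $[-r_0,r_0]$ the bound $\varphi(m^+)-\varphi(m^-)\le L(m^+-m^-)$ may fail pointwise if $\varphi$ departs from linearity between the two magnetizations; here one uses that the fluctuations of order $|S_0(j+1)|^{-1/2}$ spread $m_{j+1}^\pm$ over a scale much larger than the gap $m_{j+1}^+-m_{j+1}^-$, so that after taking the expectation $\widehat E[\varphi(m_{j+1}^+)-\varphi(m_{j+1}^-)]$ only the tangent slope $L$ survives and the nonlinear correction is lower order. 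Making these two estimates precise, and checking the almost-sure doubly-exponential growth of $|S_0(j)|$ and $k^{(j)}$ through the earlier block lemmas, is the technical core; the final statement then follows for each target $t\in\bZ$, and a countable union gives $\mu_\omega^+=\mu_\omega^-$ for $Q$-almost every $\omega$.
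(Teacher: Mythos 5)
Your reduction to the quenched problem, the monotone coupling driven by a single uniform sequence, and the disagreement count $D_j$ along the hierarchy $S_0(j)$ are all sound in themselves; in fact this is essentially how the paper proves Theorem \ref{teo2} (see Lemma \ref{lemasandro} and the comparison with the linear rule $\varphi_{ID}$). But for Theorem \ref{teo2meio} the argument has a genuine gap at its central step: the inequality $\varphi(m^+_{j+1})-\varphi(m^-_{j+1})\lesssim L\,(m^+_{j+1}-m^-_{j+1})$ is an \emph{increment} (Lipschitz) bound, whereas hypothesis \eqref{eq:conditlimsup} only controls the \emph{ratio} $\varphi(z)/z$ as $z\to0$: it gives $|\varphi(z)|\le (L+o(1))|z|$ but says nothing about $\varphi(z_2)-\varphi(z_1)$ when $z_2-z_1$ is much smaller than $z_1$. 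A non-decreasing odd $\varphi$ with $\varphi(z)/z\to L$ may still have jumps of size $\epsilon(w)\,w$ with $\epsilon(w)\to 0$ slowly (say $\epsilon(w)=1/\log(1/w)$) located near $w\approx h_k$, and then $\varphi(m^+)-\varphi(m^-)$ is not $O(L(m^+-m^-))$. Your proposed fix by averaging over the fluctuations of $m^\pm_{j+1}$ does not close this in the relevant regime: by the local CLT the probability that $(m^-,m^+]$ straddles such a jump is of order $D_{j+1}|S_0(j+1)|^{-1/2}$, so the best available bound is $\widehat E[\varphi(m^+)-\varphi(m^-)]\lesssim L\,(m^+-m^-)\cdot\epsilon(h_k)\,h_k\sqrt{|S_0(j+1)|}$, and the extra factor $h_k\sqrt{|S_0(j+1)|}\approx\beta_{k-1}^{-\alpha}\beta_k^{(1-\epsilon_*)/2}$ diverges doubly exponentially in $k$ whenever $\alpha<(1-\epsilon_*^2)/2$ --- precisely the interesting regime, where $\varphi_{PMR}$ yields non-uniqueness. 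The resulting product of per-scale factors is then no longer contractive, so $\widehat E[D_0]\to 0$ is not established.

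A telltale sign is that your argument nowhere uses the hypothesis $h_k=\beta_{k-1}^{-\alpha}$: if it were correct it would prove uniqueness for \emph{every} $h_k\searrow0$ under \eqref{eq:conditlimsup} alone, subsuming Theorem \ref{teo2} under a strictly weaker hypothesis, which the paper carefully refrains from claiming. The paper's route (Proposition \ref{propteo2meio} via Lemma \ref{lemauniqteo2}) needs only the one-sided bound $0\le\varphi(y)\le\lambda y$ for small $y>0$, which \eqref{eq:conditlimsup} does supply: after conditioning on $|\xi_{k-1}|\le h_{k-1}^m$ (high probability by Chebyshev), the drift of $\xi_{k-2}$ is at most $\lambda h_{k-1}^{1+m}$, and choosing $m$ with $\alpha(1+m)>(1-\epsilon_*)/2$ makes $\lambda h_{k-1}^{1+m}\sqrt{n_2(k-2)}\to0$, so the CLT gives a sign change of $\xi$ between scales $k$ and $k-2$ with probability at least $c_1/2$; iterating kills $E_\omega[\xi_{k'}\mid\xi_M]$ geometrically in $M-k'$. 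This is exactly where the power-law form of $h_k$ enters, and it is the ingredient your proof is missing.
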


Condition \eqref{eq:conditlimsup} 
is satisfied for example when $\varphi$ is
differentiable at $0$: $\varphi'(0)<\infty$.
It will become clear after having read 
the proof that cases where $\varphi$ is continuous at $0$
with $\varphi'(0)=\infty$ can also be handled, but 
uniqueness/non-uniqueness then becomes more sensitive to $\alpha$.
Some examples of non-uniqueness 
with $\varphi'(0)=\infty$ 
for the Bramson-Kalikow model were given in \cite{Friedli}.\\

Under a stronger condition on $\varphi$, we can show that uniqueness
holds for all sequence $h_{k}\searrow 0$.

\begin{teo}\label{teo2} 
Consider the $g$-function 
\eqref{defg}. Assume that $\varphi$ is locally Lipschitz in a neighborhood of the origin: there exists
$\delta>0$ and $\lambda>0$ such that 
\[|\varphi(z_2)-\varphi(z_1)|\leq \lambda |z_2-z_1|\,,\quad \forall z_1,z_2\in[-\delta,\delta]\,.\] 
 Then for any 
sequence $h_{k}\searrow 0$, the stationary process specified by $g$ is unique.
\end{teo}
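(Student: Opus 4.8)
The plan is to prove uniqueness by a quenched coupling argument that exploits the cascade of nested targeting sets guaranteed by property P1. Since the environment marginal $Q$ is fixed, it suffices to show that for $Q$-almost every $\omega$ the conditional law of $X_0$ becomes asymptotically insensitive to the values of the process in the remote past; concretely, I will couple two quenched processes $X$ and $X'$ (living in the same $\omega$, with arbitrary configurations imposed far to the left) so that $\bP(X_0\neq X_0')\to 0$ as the region where they are forced to agree recedes to $-\infty$. Uniqueness of $\bP=Q\otimes P_\omega$ follows.

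First I set up the cascade. By P1 all points of $S_0$ are targeted by one and the same set, call it $S_0(2)$, all of whose points are in turn targeted by a common set $S_0(3)$, and so on, giving $S_0(1)\pordef S_0,S_0(2),S_0(3),\dots$. A point $s\in S_0(j)$ lies in $\act_{\kappa_j}$, where $\kappa_j$ is the scale of its block, so Definition \ref{defativos} and \eqref{defkt} force $k_s>\kappa_j$; hence the scales strictly increase, $\kappa_{j+1}>\kappa_j\geq k_*$, and in particular $\kappa_j\to\infty$. If the cascade terminates (some $S_0(j+1)=\varnothing$), the points of $S_0(j)$ carry i.i.d. symmetric variables ($\psi\equiv 0$ by Definition \ref{DefPsi}), so the remote past has no influence and uniqueness is immediate; thus I focus on the infinite case. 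Writing $M_j\pordef |S_0(j)|^{-1}\sum_{s\in S_0(j)}X_s$, the structural fact is that, conditionally on the configuration on $S_0(j+1)$, the variables $\{X_s:s\in S_0(j)\}$ are i.i.d.\ $\pm1$ with common mean $h_{\kappa_{j+1}}\varphi(M_{j+1})$, since each $\psi_s^\omega$ depends on the past only through $S_s=S_0(j+1)$. By Lemmas \ref{lematk}, \ref{lemapik} and \ref{lemma_beginning} the sizes $|S_0(j)|$, being numbers of active points of blocks at the diverging scale $\kappa_j$, grow to infinity, so Hoeffding's inequality concentrates $M_j$ around $h_{\kappa_{j+1}}\varphi(M_{j+1})$.

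The heart of the argument is a contraction for the coupled difference. Using the monotone (common-uniform) coupling at every level, the per-site disagreement probability at level $j$ equals $\tfrac12 h_{\kappa_{j+1}}\bigl|\varphi(M_{j+1})-\varphi(M_{j+1}')\bigr|$, so, writing $D_j\pordef\bE|M_j-M_j'|$,
\[
\bE\bigl[|M_j-M_j'|\,\big|\,M_{j+1},M_{j+1}'\bigr]\leq h_{\kappa_{j+1}}\,\bigl|\varphi(M_{j+1})-\varphi(M_{j+1}')\bigr|.
\]
Fix $k_1$ with $h_{k_1}<\delta/2$ and call level $j$ deep if $\kappa_{j+1}\geq k_1$. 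At a deep level $\kappa_{j+2}>\kappa_{j+1}\geq k_1$, so the conditional mean of $M_{j+1}$ has modulus $\leq h_{\kappa_{j+2}}<\delta/2$, whence concentration places $M_{j+1},M_{j+1}'$ inside $[-\delta,\delta]$ except on an event of probability exponentially small in $|S_0(j+1)|$. On the good event the local Lipschitz hypothesis gives $|\varphi(M_{j+1})-\varphi(M_{j+1}')|\leq\lambda|M_{j+1}-M_{j+1}'|$, hence $D_j\leq \lambda h_{\kappa_{j+1}}D_{j+1}+(\text{exponentially small})$. Since $\kappa_j\to\infty$ and $h_k\searrow 0$, the factors $\lambda h_{\kappa_{j+1}}$ tend to $0$; so if the two processes coincide below level $J$, iterating over the deep levels shows the difference $D_{J_0}$ at the shallowest deep level $J_0$ tends to $0$ as $J\to\infty$. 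Finally, at most $k_1-k_*$ shallow levels separate $J_0$ from the origin; across each one propagates $D_j\to 0$ from $D_{j+1}\to 0$ via the uniform continuity of $\varphi$ together with Markov's inequality, $\bE|\varphi(M_{j+1})-\varphi(M_{j+1}')|\leq \eta+2D_{j+1}/\rho$. Reaching the origin yields $\bP(X_0\neq X_0')=\tfrac12\bE|X_0-X_0'|\to 0$, the desired insensitivity.

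The main obstacle I expect is exactly the passage from the contracting deep regime to the origin through the shallow levels: there $h_{\kappa_{j+1}}$ may be of order $h_{k_*}\leq\tfrac12>\delta$, so $M_{j+1}$ need not lie in the Lipschitz window $[-\delta,\delta]$, and $\varphi$ is only assumed non-decreasing (possibly discontinuous) away from $0$. The continuity-plus-Markov step is clean when $\varphi$ is continuous on $[-1,1]$; if $\varphi$ has jumps away from the origin one must additionally rule out that the discrete magnetizations $M_{j+1},M_{j+1}'$ straddle a jump with non-vanishing probability, which requires an anti-concentration estimate for $M_{j+1}$ at the finitely many fixed shallow scales. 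The remaining work is bookkeeping: making the Hoeffding bounds uniform in the random sizes $|S_0(j)|$ through the almost-sure size estimates of Section \ref{sec_BHS}, and verifying that the finitely many shallow factors contribute only a bounded multiplicative constant that the deep contraction overwhelms.
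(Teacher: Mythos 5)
Your deep-level contraction is essentially the mechanism behind the paper's Proposition \ref{propuniqteo2}, but the way you assemble it into a uniqueness proof has a genuine gap. The error term at level $j$ — coming from the event that $M_{j+1}$ or $M'_{j+1}$ leaves $[-\delta,\delta]$, where the Lipschitz bound is unavailable and $\varphi$ may jump, so that $|\varphi(M_{j+1})-\varphi(M'_{j+1})|$ can only be bounded by $2$ — is of order $h_{\kappa_{j+1}}e^{-c\,n_1(\kappa_{j+1})}$, and each such term is strictly positive. These errors are summed over all deep levels down to the shallowest deep level $J_0$, which is \emph{fixed} (determined by $\delta$ via $h_{k_1}<\delta/2$) and cannot be sent to infinity. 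The resulting series converges but to a strictly positive constant $\epsilon_0$ independent of how far back the boundary $J$ sits, so your iteration yields only $\limsup_{J}D_{J_0}\leq\epsilon_0>0$, not $D_{J_0}\to 0$; a coupling bound that does not vanish does not give uniqueness. The paper escapes exactly this by inserting an intermediate scale $L$ with $k'<L<M$ and letting $L\to\infty$ \emph{after} $M\to\infty$: the bad-event/decoupling probability is then only $\sum_{k\geq L}e^{-c\,n_1(k)}$, which does tend to $0$. The remaining passage from $L$ down to a fixed observable is handled not by a quantitative contraction but by symmetry: the limiting conditional law of $\xi_L$ is symmetric, the map $x\mapsto E_\omega[X_t\mid\xi_L=x]$ is odd, hence $E_\omega^+[X_t]=E_\omega^-[X_t]=0$ for every $t$, and the monotonicity criterion of Appendix \ref{app:unicidade} upgrades this to $P_\omega^+=P_\omega^-$ and to uniqueness among all measures satisfying \eqref{eq:distr_cond_X}.

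Your own worry about the shallow levels is a second, related gap: there $\varphi$ is only non-decreasing and odd, possibly with jumps, so the ``uniform continuity plus Markov'' step is not available, and the anti-concentration estimate you would need (that $M_{j+1}$ does not straddle a jump with non-vanishing probability) is neither supplied nor easy, since a monotone $\varphi$ may have infinitely many jumps. The paper's symmetry route makes the shallow scales irrelevant, which is precisely why it works for a $\varphi$ that is arbitrary away from the origin. On the positive side, your direct $L^1$ contraction $E_\omega[\,|\txi_{k}-\txi'_{k}|\,]\leq\thh_{k+1}E_\omega[\,|\txi_{k+1}-\txi'_{k+1}|\,]$ in the globally $1$-Lipschitz case is a clean substitute for the paper's Lemma \ref{lemasandro} (which instead dominates the Lipschitz chain by the purely linear one and uses the exact product formula $\prod_k\thh_k\to 0$); what needs to change is the surrounding architecture: where the bad-event errors are allowed to accumulate, and how one returns from a large scale to a fixed local observable.
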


An example that leads to uniqueness 
for all sequence $h_{k}\searrow 0$ is when $\varphi=\varphi_{\mathrm{lin}}$ 
is linear at the origin: there exists $0<\lambda<\infty$ and 
$\delta>0$ such that $\varphi_{\mathrm{lin}}(z)=\lambda z$ for all
$z\in [-\delta,\delta]$. (This particular example will actually play an important
role in the proof.)
Otherwise, natural candidates such as $\varphi(z)\pordef \tanh(\beta z)$ also
lead to uniqueness even for large $\beta>0$.

\begin{figure}[H]
\begin{center}
\begin{tikzpicture}[scale=2]
\newcommand{\tangh}[1]{(exp(#1)-exp((-1)*#1))/(exp(#1)+exp((-1)*#1))}

\pgfmathsetmacro{\delt}{0.2};
\draw[->](-1.2,0)--(1.2,0);
\draw[->](0,-1.2*0.7)--(0,0.7*1.2) node[left]{$\varphi_{\mathrm{lin}}$};
\draw(-1,0) node[above]{$-1$};
\draw(1,0) node[below]{$+1$};
\draw[dotted] (-1,0)--(-1,-0.7);
\draw[dotted] (1,0)--(1,0.7);
\draw[dashed] ({(-1)*\delt},0)--({(-1)*\delt},-0.7);
\draw[dashed] (\delt,0)--(\delt,0.7);
\draw[very thick] (-1,-0.7)--({-1*\delt},-0.7)--({\delt},0.7)--(1,0.7);
\begin{scope}[xshift=3cm]
\draw[->](-1.2,0)--(1.2,0);
\draw[->](0,-1.2*0.7)--(0,0.7*1.2) node[left]{$\tanh(\beta z)$};
\draw(-1,0) node[above]{$-1$};
\draw(1,0) node[below]{$+1$};
\draw[very thick, domain=-1:1, samples=50] plot (\x,{0.7*\tangh{9*\x}});
\draw[dotted] (-1,0)--(-1,-0.7);
\draw[dotted] (1,0)--(1,0.7);
\end{scope}
\end{tikzpicture}
%%%% EXPORT 100%
\end{center}
\end{figure}
{We emphasize that our uniqueness results can't be derived from the 
classical criteria found in the litterature (such as
\eqref{eq:critsuecos}).
The reason for this is that most criteria are insensitive to the behavior of $\varphi$ at
the origin. In particular, our results allow to build $g$-functions with arbitrarily
slow-decaying variation, that specify a unique stationary process.}\\

The paper is organized as follows.
We will first give a detailed description of the environment in 
Section \ref{sec_blocks}, whose spirit follows closely \cite{BHS}.
After that, we will describe when an environment 
should be considered as \emph{good}, and at the
beginning of Section \ref{SecProoft1} introduce an event $\{\infty\to k\}$, that
will be used constantly in the sequel. We then prove Theorems \ref{teo1} and
\ref{teo2meio}, using two propositions that are proved later in
Section \ref{sectionestimprinc}.
Theorem \ref{teo2} is proved in Section \ref{sec:provadiff}.

\section{The environment and properties of blocks}\label{sec_blocks}
In this section, we study typical properties of a $k$-block:
its diameter, its beginning, and finally 
we estimate the number of $k$-active points it contains.\\

Given a $k$-block $B=[a^k,b^k]$ in some environment $\omega$, we define 
$\Pi(B)$, the \grasA{word of} $B$, as the sequence 
of symbols of $\omega$ seen in $B$: 
\[\Pi(B)\pordef (\omega_{a^k},\omega_{a^k+1},\cdots,\omega_{b^k}).\]
Remember (check Figure \ref{fig_bloco_simples}) 
that $\omega_{a^k}=0$, and that
$\omega_{b_k-\ell_k+1}=\dots=\omega_{b_k}=1$.\\

To study properties of blocks that only depend on its word, and since 
the environment is stationnary with respect to $Q$, it
will be enough to consider the blocks containing the origin,
$B^k(0)$, $k\geq k_*$.
The study of $\Pi(B^k(0))$ will be simplified by first
studying the block $B^k(0)$ when its first point is 
fixed at the origin.\\
 
Let therefore  $\eta =(\eta_i)_{i\geq 1}$ be an i.i.d. sequence, 
such that $P(\eta_i=0)=1-P(\eta_i=1)=\tfrac12$. 
For each $k\geq 1$, we consider 
\grasA{the time of first occurrence of $I_k$} (remember
\eqref{defIk}) \grasA{in $\eta$}, defined by
\[\label{deftk} 
T^k\pordef \inf\bigl\{j\geq 
\ell_k:(\eta_{j-\ell_k+1},\cdots,\eta_j)=I_k\bigr\}.\]
Defining $\eta_0\pordef 0$, the random word
\[\Pi^k\pordef 
(\eta_0,\eta_1,\cdots, \eta_{T^k-1})\] 
has the same distribution as of that of a word of a 
$k$-block. We call $\Pi^k$ a 
\grasA{$k$-word}, and denote the set of all $k$-words by 
$\cW^k$.
Many notions introduced for $k$-blocks extend naturally to
the $k$-word $\Pi^k$. For instance, the 
\grasA{diameter} of $\Pi^k$, that is the number of symbols it
contains, is 
\begin{equation}\label{relTk}
\diam(\Pi^k)=T^k\,.
\end{equation}

We will study a few elementary properties of the $k$-word
$\Pi^k$, and then extend them to $B^k(0)$.

\subsection{The diameter}

A classical martingale argument (see ``the monkey typing Shakespeare'' in 
\cite{Williams}) 
allows to compute the expectation of the size of a $k$-word: 
\[E[T^k]=2^1+2^2+\cdots+2^{\ell_k}.\]
Therefore,
\begin{equation}\label{esptk}
 \beta_k \leq E[\diam(\Pi^k)]\leq 
2\beta_{k}\,.
\end{equation}

Moreover, the distribution of $\frac{T^k}{E[T^k]}$ has an 
exponential tail:

\begin{lem}\label{lematk} For all $j\geq 1$
\[ P\bigl(T^k\geq j \beta_k\bigr)\leq e^{-j}.\]
\end{lem}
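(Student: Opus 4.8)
The plan is to reduce the tail of $T^k$ to a single-scale estimate, exploiting independence across disjoint windows of length $\beta_k$. I would partition $[1,j\beta_k]$ into the consecutive windows $W_i\pordef[(i-1)\beta_k+1,\,i\beta_k]$, $i=1,\dots,j$, and let $F_i$ be the event that $I_k$ is not seen on any subinterval contained in $W_i$. Whether $I_k$ occurs inside $W_i$ depends only on the symbols $\eta_s$ with $s\in W_i$, so $F_1,\dots,F_j$ are independent; and since a first occurrence at a time $\le j\beta_k$ would in particular lie inside one of the windows, $\{T^k\ge j\beta_k\}\subseteq\bigcap_i F_i$. Hence
\[
P(T^k\ge j\beta_k)\le\prod_{i=1}^j P(F_i)=\rho_k^{\,j},\qquad \rho_k\pordef P(\text{$I_k$ not seen in a fresh block of }\beta_k\text{ fair bits}),
\]
and the whole statement reduces to the single-window bound $\rho_k\le e^{-1}$.

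To estimate $\rho_k$, let $M$ be the number of occurrences of $I_k$ (indexed by right endpoint) inside such a block, so $\rho_k=P(M=0)$ and $\bE[M]=(\beta_k-\ell_k+1)2^{-\ell_k}=1-(\ell_k-1)/\beta_k$. The key structural fact is that $I_k=(1,\dots,1,0)$ has trivial autocorrelation: a prefix of length $i<\ell_k$ consists only of $1$'s while every suffix ends in $0$, so they never coincide. Consequently two occurrences can never lie within distance $<\ell_k$ (a second right endpoint there would force some symbol to be both $0$ and $1$), i.e. the events $\{I_k\text{ ends at }p\}$ are pairwise mutually exclusive exactly when the intervals they read overlap. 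This makes the correction term in Janson's inequality vanish, and Janson then gives $P(M=0)\le e^{-\bE[M]}$. (A second-moment/Paley--Zygmund estimate only gives $\rho_k\le\tfrac12$, i.e. an exponential rate with a worse constant; Janson is what recovers $e^{-1}$.) Since $\ell_k/\beta_k=\ell_k2^{-\ell_k}\to0$, one gets $\bE[M]\to1$ and hence $P(T^k\ge j\beta_k)\le e^{-\bE[M]j}\le e^{-(1-\ell_k/\beta_k)j}$.

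The main obstacle is the \emph{sharpness} of the constant rather than the mechanism. With a window of length exactly $\beta_k$ the mean number of occurrences is slightly below $1$, and a transfer-matrix computation (survival of the Markov chain counting the current run of $1$'s, absorbed when a $0$ follows $\ell_k-1$ ones) shows that $P(T^k\ge\beta_k)$ decreases to $e^{-1}$ from above as $\ell_k\to\infty$. Thus the clean form $e^{-j}$ is the asymptotic one, and it is the hypothesis that the smallest scale $k_*$ be taken large that makes the correction $\ell_k/\beta_k$ negligible for every $k\ge k_*$. Everything else — the independence across windows and the vanishing of the Janson error, which rests solely on the trivial autocorrelation of $I_k$ — is elementary.
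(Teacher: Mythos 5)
Your reduction to a single window is exactly the paper's first step: the same partition of $[1,j\beta_k]$ into $j$ disjoint blocks of length $\beta_k$, the same independence claim across blocks, the same reduction to a one-window estimate. Where you genuinely diverge is in that one-window estimate. The paper splits the candidate starting positions into residue classes modulo $\ell_k$, so that within each class the $\beta_k/\ell_k$ candidate intervals are disjoint, and then removes the dependence \emph{between} classes by a negative-correlation inequality, arriving at $(1-2^{-\ell_k})^{\beta_k}\leq e^{-1}$. You instead index occurrences by right endpoint and invoke Janson, using the fact that $I_k$ has trivial autocorrelation, so that any two dependent occurrence events are actually mutually exclusive. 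That observation is correct and does make the argument work; note only that Janson's inequality is usually stated for increasing events, which yours are not, but since $A_p\cap A_q=\varnothing$ whenever the windows overlap one has $A_p\subseteq\bigcap_{q\sim p}A_q^c$, and the Boppana--Spencer conditioning argument then gives $P(\bigcap_p A_p^c)\leq\prod_p(1-P(A_p))\leq e^{-\bE[M]}$ directly, with no appeal to monotonicity. This is arguably cleaner than the paper's residue-class decomposition.

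The gap is in the constant, and you should not wave it away. Your window admits only $\beta_k-\ell_k+1$ right endpoints (occurrences straddling a window boundary are lost), so $\bE[M]=1-(\ell_k-1)/\beta_k<1$ and you obtain $P(T^k\geq j\beta_k)\leq e^{-(1-\ell_k/\beta_k)j}$, which is strictly larger than the stated $e^{-j}$ for every $j$; taking $k_*$ large makes the defect small but never zero, so as a proof of the lemma \emph{as stated} this does not close. That said, your side remark is the correct diagnosis: $P(T^k\geq\beta_k)$ really does exceed $e^{-1}$ (it equals $1/2$ for $\ell_k=2$ and $27/64$ for $\ell_k=3$, and an inclusion--exclusion expansion gives $e^{-1}\bigl(1+\tfrac12\beta_k^{-1}\bigr)+O(\beta_k^{-2})$ in general), so the clean constant in the statement is not attainable by any argument. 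The paper's own proof hides the same defect: its events $G_s$ use intervals with left endpoints in $[1,\beta_k]$, hence right endpoints protruding up to $\beta_k+\ell_k-1$, which supplies exactly the $\ell_k-1$ missing trials needed to reach $(1-1/\beta_k)^{\beta_k}$, but silently contradicts the earlier description of $F_0$ as occurrences inside $(0,\beta_k]$ and breaks the exact independence of consecutive $F_m$. In short: up to immaterial boundary effects your bound $e^{-(1-\ell_k/\beta_k)j}$ is what is actually provable (and essentially sharp), and it is amply sufficient for Corollary \ref{corol_diam} and everything downstream, all of which have order-one slack; but it is not the inequality claimed, and an honest write-up should either state the weaker bound or enlarge the windows as the paper implicitly does and accept the resulting loss of exact independence.
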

\begin{proof} For all $0\leq m<j$, let $F_m$ the event in which $I_k$ is 
seen on one of the disjoint subintervals $(m \beta_k,(m+1) \beta_k]$. We 
have
\[P\bigl(T^k\geq j \beta_k\bigr) \leq 
P\Bigl(\bigcap_{m=0}^{j-1}F_m^c\Bigr)=\prod_{m=0}^{j-1}P(F_m^c)=  
P(F_0^c)^j.\]
Moreover, $F_0= \bigcup_{s=0}^{\ell_k-1}G_s$, where 
$G_s\pordef\{I_k$ is seen in some intervals (disjoint)
$s+n\ell_k+[1,\ell_k]$, $n=0,\dots, \beta_k/\ell_k-1\}$.
It can be verified, by inspection, that for each $s$, $P(G_s|G_{s+1}\cup
\dots \cup G_{\ell_k-1})\leq P(G_s)$. This implies 
$P(G_s^c|G_{s+1}^c\cap \dots \cap G_{\ell_k-1}^c)\leq P(G_s^c)$. 
Therefore,
\begin{align*} P(F_0^c) = P\Bigl(\bigcap_{s=0}^{\ell_k-1} G_s^c\Bigr)
&=\prod_{s=0}^{\ell_k-1} P\bigl(G_s^c|G_{s+1}^c\cap \dots 
G_{\ell_k-1}^c\bigr)\\
&\leq 
\prod_{s=0}^{\ell_k-1}P(G_s^c)=\bigl(1-(\tfrac12)^{\ell_k}\bigr)^{ 
\beta_k}\leq e^{-1}.\qedhere
\end{align*}
\end{proof}

\begin{corol}\label{corol_diam} 
For all $j\geq 1$,
\begin{equation}\label{estimdiamBk}
Q\bigl(
\diam(B^k(0))\geq j\beta_k\bigr)\leq 6je^{-j}\,.
\end{equation}
\end{corol}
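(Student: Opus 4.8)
The plan is to exploit the fact that the block covering a \emph{fixed} site (here the origin) is not a typical block but a \emph{size-biased} one: a block of diameter $\ell$ is $\ell$ times more likely to cover the origin than to be sampled ``uniformly among blocks''. Once this bias is accounted for, the tail of $\diam(B^k(0))$ reduces to the single-block tail already controlled by Lemma \ref{lematk}, and the extra linear prefactor in the bound $6je^{-j}$ is precisely the signature of this bias.

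Concretely, I would first record an exact renewal decomposition. A block begins at a site $s$ exactly when $I_k$ ends at $s$, i.e.\ when $\omega_{s-\ell_k+1}=\dots=\omega_{s-1}=1$ and $\omega_s=0$; this has $Q$-probability $2^{-\ell_k}=1/\beta_k$, and, conditionally on it, the diameter of the block started at $s$ has the law of $T^k=\diam(\Pi^k)$ (the environment strictly to the right of $s$ being i.i.d.\ and independent of the conditioning). Since $B^k(0)$ has a unique left endpoint $a^k(0)=-d$ with $d\geq 0$, and it covers the origin iff its diameter exceeds $d$, one gets the disjoint decomposition
\[
\{\diam(B^k(0))\geq j\beta_k\}=\bigsqcup_{d\geq 0}\{\text{a block begins at }-d\}\cap\{\text{its diameter}\geq\max(j\beta_k,d+1)\}\,.
\]
By stationarity of $Q$ each term equals $\tfrac{1}{\beta_k}P\bigl(T^k\geq\max(j\beta_k,d+1)\bigr)$, whence
\[
Q\bigl(\diam(B^k(0))\geq j\beta_k\bigr)=\frac{1}{\beta_k}\Bigl[j\beta_k\,P(T^k\geq j\beta_k)+\sum_{\ell> j\beta_k}P(T^k\geq\ell)\Bigr]\,.
\]

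It then remains to estimate the two terms with Lemma \ref{lematk}. The first is simply $j\,P(T^k\geq j\beta_k)\leq je^{-j}$. For the second I would group the values of $\ell$ into consecutive windows of length $\beta_k$: for $m\beta_k<\ell\leq(m+1)\beta_k$ one has $P(T^k\geq\ell)\leq P(T^k\geq m\beta_k)\leq e^{-m}$, and there are $\beta_k$ such $\ell$, so the sum is at most $\beta_k\sum_{m\geq j}e^{-m}=\beta_k\,e^{-j}/(1-e^{-1})$. Dividing by $\beta_k$ and adding gives $\bigl(j+\tfrac{1}{1-e^{-1}}\bigr)e^{-j}\leq 6je^{-j}$ for all $j\geq1$, as claimed.

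The only genuinely delicate point is the size-biasing step: justifying the renewal decomposition and the identity $2^{-\ell_k}=1/\beta_k$ for the density of block-beginnings, which hinges on the observation that, conditioned on a block starting at $s$, its diameter is governed solely by the i.i.d.\ coordinates to the right of $s$ and therefore reproduces the law of $T^k$. Everything afterwards is a routine summation against the exponential tail of Lemma \ref{lematk}. I would also stress that this bias is exactly what defeats a naive two-sided union bound (splitting the diameter into a left and a right part): such a split yields only the rate $e^{-j/2}$, whereas the correct size-biased computation recovers the sharp $e^{-j}$ together with the linear prefactor.
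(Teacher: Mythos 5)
Your proof is correct and follows essentially the same route as the paper's: both rest on the same size-biasing computation --- the number of admissible left endpoints times the renewal density $2^{-\ell_k}=\beta_k^{-1}$ reduces the tail of $\diam(B^k(0))$ to that of $T^k$ --- followed by the same windowed summation against Lemma \ref{lematk}. The paper organizes the decomposition over the word $\Pi(B^k(0))$ together with its left endpoint rather than over the left endpoint alone, but the two computations are identical in substance.
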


\begin{proof}
Write $B^k(0)=[a^k(0),b^k(0)]$. For 
all finite interval $J\subset \bN$, 
\begin{align}
Q\bigl(\diam(B^k(0))\in J\bigr)
&=\sum_{\substack{\pi\in  \cW^k\\ \diam(\pi)\in J}}
Q\bigl(\palavra (B^k(0))=\pi\bigr)\nonumber\\
&=\sum_{\substack{\pi\in  \cW^k\\ \diam(\pi)\in J}}
\sum_{-\diam(\pi)<a\leq 0}
Q\bigl(\palavra (B^k(0))=\pi, a^k(0)=a\bigr)\,.\label{eq_decompP}
\end{align}

But $\{\palavra (B^k(0))=\pi, a^k(0)=a\}$, when not empty, is uniquely 
determined by the following three conditions:
 \begin{enumerate}
\item $\omega_j=1$ for all $j\in \{a-\ell_k+1,\dots,a-1\}$,
\item $(\omega_a,\omega_{a+1},\dots,\omega_{a+\diam(\pi)-1})=\pi$,
\item $\omega_{a+\diam(\pi)}=0$.
\end{enumerate}

Therefore, by the independence of the variables $\omega_i$,
 \begin{equation}\label{expr_Q}
Q\bigl(\palavra (B^k(0))=\pi,
a^k(0)=a\bigr)=(\tfrac12)^{\ell_k-1}(\tfrac12)^{\diam(\pi)}(\tfrac12)\,,
\end{equation}
which implies

\begin{align*}
Q\bigl(\diam(B^k(0))\in J\bigr)&=
(\tfrac12)^{\ell_k}\sum_{\substack{\pi\in
\cW^k\\ \diam(\pi)\in
J}}\diam(\pi)(\tfrac12)^{\diam(\pi)}\nonumber\\
&\leq (\tfrac12)^{\ell_k}(\max_{j\in J}j)\sum_{\substack{\pi\in  \cW^k\\
\diam(\pi)\in J}}(\tfrac12)^{\diam(\pi)}.
\end{align*}

But for each $\pi\in \cW^k$, 
\begin{equation} \label{exprPword}P(\Pi^k=\pi)=(\tfrac12)^{\diam(\pi)-1}
\cdot (\tfrac12)=(\tfrac12)^{\diam(\pi)}\,,
\end{equation}
where  ``$\cdot\left(\tfrac12\right)$'' appears in order to 
have a ``$0$'' after $\pi$, to guarantee the occurrence of the event 
$\{\Pi^k=\pi\}$. Therefore,
 \begin{equation}\sum_{\substack{\pi\in  \cW^k\\
\diam(\pi)\in J}}(\tfrac12)^{\diam(\pi)}
\equiv  P\bigl(\diam(\Pi^k)\in J\bigr)=
  P\bigl(T^k\in J\bigr)\,,
\end{equation}
and we have shown that 
\begin{equation}\label{estim_diamblock}
Q\bigl(\diam(B^k(0))\in J\bigr)\leq  (\tfrac12)^{\ell_k}(\max J)
P\bigl(T^k\in J\bigr)\,.
\end{equation}

Let $J^k_i\pordef[i \beta_k,(i+1) \beta_k)$. Since
$(\tfrac12)^{\ell_k}\beta_k=1$,
\begin{align*}
Q\bigl(\diam(B^k(0))\geq j \beta_k\bigr)&=
\sum_{i\geq j}Q\bigl(\diam(B^k(0))\in J^k_i\bigr)\\
&\leq \sum_{i\geq j}(i+1)  P\bigl(T^k\in J_i^k\bigr)\\
&\leq \sum_{i\geq j}(i+1)  P\bigl(T^k\geq i \beta_k\bigr)\,.
\end{align*}
Then, \eqref{estimdiamBk} follows from Lemma \ref{lematk}.
\end{proof}

\subsection{The beginning}
When $k>k_*$, 
$\Pi^k$ can always be viewed as a concatenation of $(k-1)$-words:
\[\Pi^k= 
\Pi^{k-1}_1\circ\cdots\circ\Pi^{k-1}_{N(\Pi^k)}\,,\]
where $\Pi_1^{k-1}\pordef \Pi^{k-1}$ and 
\[
\Pi_j^{k-1}\pordef \Pi^{k-1}\circ \theta_{\diam(\Pi^{k-1}_1\circ \cdots \circ
\Pi^{k-1}_{j-1})}\,.
\]
The number of $(k-1)$-words contained in $\Pi^k$, $N(\Pi^k)$, is geometric:

\begin{lem}\label{lemapik} For all $k>k_*$, 
there exists $p_k\in (0,1)$,
$\nu_k^{-1}\leq p_k\leq 2\nu_k^{-1}$, such that 
\begin{equation}\label{distnpik}
\forall j\geq 1,\quad  
P(N(\Pi^k)=j)=(1-p_k)^{j-1}p_k.
\end{equation}
In particular, $E[N(\Pi^k)]=p_k^{-1}$.
\end{lem}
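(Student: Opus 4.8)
The plan is to exhibit $N(\Pi^k)$ as the index of the first success in a sequence of i.i.d.\ Bernoulli trials, which forces the geometric law, and then to pin down the success probability $p_k$.

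First I would set up the renewal structure underlying the decomposition $\Pi^k=\Pi^{k-1}_1\circ\cdots\circ\Pi^{k-1}_{N(\Pi^k)}$, where the successive $(k-1)$-words are read off the i.i.d.\ sequence $\eta$ by iterating the first-occurrence-of-$I_{k-1}$ construction. The key point is that these words are i.i.d.: writing $T=T^{k-1}$, the symbol $\eta_{T}$ is deterministically the terminating ``$0$'' of $I_{k-1}$, and it plays exactly the role of the forced initial symbol $\eta_0\pordef 0$ of the next word; by the strong Markov property applied at the stopping time $T$, the sequence $(\eta_T,\eta_{T+1},\dots)$ is an independent copy of $(\eta_0,\eta_1,\dots)$, so $\Pi^{k-1}_2$ is an independent copy of $\Pi^{k-1}_1$, and the general claim follows by induction.

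Next I would identify the event that stops the concatenation. For each $(k-1)$-word let $R$ be the length of the run of $1$'s ending it (equivalently, the run immediately preceding its terminating $0$); by construction $R\geq \ell_{k-1}-1$. The decisive observation is that a completion of $I_k=1^{\ell_k-1}0$ can occur \emph{only} at a $(k-1)$-word boundary, and occurs there iff $R\geq \ell_k-1$: the only $0$'s eligible to be the final symbol of $I_k$ are the terminating $0$'s separating consecutive $(k-1)$-words, since an interior $0$ preceded by $\ell_k-1\geq \ell_{k-1}-1$ ones would already display $I_{k-1}$ strictly before the end of its word, contradicting minimality of $T^{k-1}$. Hence $I_k$ first appears at the end of the first $(k-1)$-word with $R\geq \ell_k-1$, so $N(\Pi^k)=\inf\{j\geq 1: R_j\geq \ell_k-1\}$ with $R_1,R_2,\dots$ i.i.d. This is a first-success time, giving $P(N(\Pi^k)=j)=(1-p_k)^{j-1}p_k$ with $p_k\pordef P(R\geq \ell_k-1)$ and $E[N(\Pi^k)]=p_k^{-1}$.

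It then remains to locate $p_k$ in $[\nu_k^{-1},2\nu_k^{-1}]$, which I would do by computing the law of $R$. Let $\sigma$ be the first time the current run of consecutive $1$'s reaches length $\ell_{k-1}-1$; no $I_{k-1}$ can have been completed before $\sigma$, and after $\sigma$ each further symbol is an independent fair bit with the word terminating at the first subsequent $0$. Thus $R-(\ell_{k-1}-1)$ is geometric with parameter $\tfrac12$ on $\{0,1,2,\dots\}$, whence
\[
p_k=P\bigl(R\geq \ell_k-1\bigr)=2^{-(\ell_k-\ell_{k-1})}=\frac{\beta_{k-1}}{\beta_k}=\nu_k^{-1},
\]
which lies in the asserted interval. (Alternatively, for the expectation one may use $\diam(\Pi^k)=\sum_{j=1}^{N(\Pi^k)}\diam(\Pi^{k-1}_j)$ and Wald's identity to get $p_k^{-1}=E[\diam(\Pi^k)]/E[\diam(\Pi^{k-1})]$, and then \eqref{esptk} yields the upper bound $p_k\leq 2\nu_k^{-1}$; the sharp lower bound comes from the run computation.) The step I expect to be the main obstacle is the rigorous justification of the i.i.d.\ renewal structure together with the ``completion only at boundaries'' dichotomy: one must verify that the shared terminating/initial $0$ creates no dependence between consecutive words, and that no occurrence of $I_k$ can be hidden inside a $(k-1)$-word. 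Once these two points are secured, the geometric law and the evaluation of $p_k$ are immediate.
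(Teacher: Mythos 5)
Your proof is correct and rests on the same renewal decomposition as the paper's: both arguments realize $N(\Pi^k)$ as the index of the first ``success'' in an i.i.d.\ sequence of $(k-1)$-words, which yields the geometric law at once. The only genuine divergence is in how the success probability is evaluated. The paper defines the closing event as $\{T^{k-1}=T^k\}$ and computes $p_k=\sum_{t\ge \ell_k}P(T^{k-1}=T^k=t)=\beta_k^{-1}E[T^{k-1}]$, then places $p_k$ in $[\nu_k^{-1},2\nu_k^{-1}]$ via the two-sided bound $\beta_{k-1}\le E[T^{k-1}]\le 2\beta_{k-1}$. You instead determine the exact law of the terminal run $R$ of $1$'s: stopping at the first time the current run reaches length $\ell_{k-1}-1$ and applying the strong Markov property, $R-(\ell_{k-1}-1)$ is geometric of parameter $\tfrac12$, so $p_k=P(R\ge \ell_k-1)=2^{-(\ell_k-\ell_{k-1})}=\nu_k^{-1}$ exactly. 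This is slightly sharper (it pins $p_k$ at the lower endpoint of the asserted interval, consistently with the paper's identity since the correct ``monkey typing'' value is $E[T^{k-1}]=2^{\ell_{k-1}}=\beta_{k-1}$ for the non-self-overlapping pattern $I_{k-1}$), at the cost of one more stopping-time argument; the paper's route needs only the order of magnitude of $E[T^{k-1}]$. Your treatment of the two delicate points --- that consecutive words are i.i.d.\ (strong Markov at $T^{k-1}$, the terminating $0$ doubling as the forced initial symbol of the next word) and that $I_k$ can only be completed at a word boundary (an interior $0$ preceded by $\ell_k-1\ge\ell_{k-1}-1$ ones would exhibit $I_{k-1}$ strictly before $T^{k-1}$) --- is exactly what is needed and is carried out correctly.
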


\begin{proof} Consider an independent identically distributed sequence 
of $(k-1)$-words, with the same distribution as $\Pi^{k-1}$: 
$\Pi^{k-1}_1,\Pi^{k-1}_2,\dots$. When sampling a $(k-1)$-word, we say that this 
$(k-1)$-word is \grasA{closing} if the first occurrence of $I_{k-1}$ 
coincides with the first occurrence of $I_k$, which means that 
the first occurrence of $I_{k-1}$ is preceded by a sequence of 
$\ell_k-\ell_{k-1}$ symbols ``$1$''. Therefore, the concatenation of the 
first $j$ $(k-1)$-words of the sequence $\Pi^{k-1}_1,\Pi^{k-1}_2,\dots$ 
is a $k$-word if and only if the $(j-1)$-th first are not closing and 
the $j$-th is closing. Defining
\[p_k\pordef   P(\Pi^{k-1}\text{ is closing})= P(T^{k-1}=T^k), \] 
we obtain 
\eqref{distnpik}.
If $T^{k-1}=t\geq \ell_k$, we denote by $r_t$ the word seen in the 
interval $[t-\ell_k+1,t-\ell_{k-1}]$, of diameter $\ell_k-\ell_{k-1}$, 
and by $q_t$ the word seen in the interval $[t-\ell_{k-1}+1,t]$.
We have
\begin{align*} P(T^{k-1}=T^k=t)&=P(r_t=(+,\dots,+), T^{k-1}=t)\\
&=P(T^{k-1}>t-\ell_k, r_t=(+,\dots,+), q_t=I_{k-1})\\
&=P(T^{k-1}>t-\ell_k) P(r_t=(+,\dots,+)) P(q_t=I_{k-1})\\
&= \beta_k^{-1}P(T^{k-1}>t-\ell_k)\,.
\end{align*}
Therefore, \[p_k=\sum_{t\geq \ell_k}P(T^{k-1}=T^k=t)= \beta_k^{-1} 
E[T^{k-1}].\]
Using \eqref{esptk}, we get $\nu_k^{-1}\leq p_k\leq 2\nu_k^{-1}$.
\end{proof}

All notions previously defined for blocks, such as active points,
``being good'', etc, have immediate analogs for words. Namely, any $k$-word
$\Pi\in\cP^k$, can be identified as $\Pi(B^k(0))$, where $B^k(0)$ is assumed
to have its first point pinned at the origin: $a^k(0)=0.$  The {beginning
of $\Pi^k$} is the interval
\[
\comeco(\Pi^k) \pordef \Bigl[0, 
\sum^{N(\Pi^k)\wedge\lfloor\nu_k^{1-\epsilon_*}\rfloor}_{i=1} 
\diam(\Pi^{k-1}_i)\Bigr).
\]

Using \eqref{esptk},
\begin{equation}\label{estim_diam_word}
E[\diam(\comeco(\Pi^k))]\leq
\nu_k^{1-\epsilon_*}E[\diam(\Pi^{k-1})]\leq 
2\nu_{k}^{1-\epsilon_*}\beta_{k-1}\,.
\end{equation}

We can now study the position of a point relative to the beginning of each of the $k$-blocks
in which it is contained:

\begin{lem}\label{lemma_beginning}
Let $t\in \bZ$.
For $k=k_*$, 
\begin{equation}\label{estim_comeco_star}
Q\bigl(t \in \comeco(B^{k_*}(t))\bigr)\geq 1-
6\lfloor\beta_{k_*}^{\epsilon_*}\rfloor
e^{-\lfloor\beta_{k_*}^{\epsilon_*}\rfloor}\,.
\end{equation}
For all $k>k_*$,
\begin{equation}\label{estim_comeco}
Q\bigl(t\in \comeco(B^k(t))\bigr)\leq
2\nu_k^{-\epsilon_*}\,,
\end{equation}
\end{lem}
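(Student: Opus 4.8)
The plan is to reduce both statements to the block $B^k(0)$ containing the origin, which is legitimate since $\omega$ is i.i.d.\ (so $Q$ is shift-invariant) and the block decomposition together with its beginnings is shift-covariant; hence $Q(t\in\comeco(B^k(t)))=Q(0\in\comeco(B^k(0)))$ for every $t$. Both bounds will then be obtained from the word-decomposition already used in the proof of Corollary \ref{corol_diam}, i.e.\ from length-biasing identities relating events about $B^k(0)$ to expectations over the $k$-word $\Pi^k$.

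For $k=k_*$ I would argue directly from the diameter tail. Since $a^{k_*}(0)\leq 0$, the complementary event is $\{0\notin\comeco(B^{k_*}(0))\}=\{0-a^{k_*}(0)>\beta_{k_*}^{1+\epsilon_*}\}$, and because $\diam(B^{k_*}(0))\geq (0-a^{k_*}(0))+1$ together with $\lfloor\beta_{k_*}^{\epsilon_*}\rfloor\beta_{k_*}\leq\beta_{k_*}^{1+\epsilon_*}$, this event is contained in $\{\diam(B^{k_*}(0))\geq\lfloor\beta_{k_*}^{\epsilon_*}\rfloor\beta_{k_*}\}$. Corollary \ref{corol_diam} with $j=\lfloor\beta_{k_*}^{\epsilon_*}\rfloor$ then gives exactly the bound $6\lfloor\beta_{k_*}^{\epsilon_*}\rfloor e^{-\lfloor\beta_{k_*}^{\epsilon_*}\rfloor}$ on the complement, which is \eqref{estim_comeco_star}.

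For $k>k_*$ the core step is the identity
\[
Q\bigl(0\in\comeco(B^k(0))\bigr)=(\tfrac12)^{\ell_k}\,E\bigl[\diam(\comeco(\Pi^k))\bigr],
\]
which I would prove by repeating the decomposition of Corollary \ref{corol_diam} over the word $\pi=\palavra(B^k(0))$ and the left endpoint $a=a^k(0)$. The point is that, once $\pi$ is fixed, the beginning is the contiguous initial segment $\comeco(B^k(0))=[a,a+\diam(\comeco(\pi))-1]$ whose length depends on $\pi$ only; hence $\{0\in\comeco(B^k(0))\}$ allows exactly $\diam(\comeco(\pi))$ admissible values of $a$, each carrying the $a$-independent weight $(\tfrac12)^{\ell_k+\diam(\pi)}$ from \eqref{expr_Q}. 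Summing over these and over $\pi\in\cW^k$, and using \eqref{exprPword} to recognize $\sum_\pi\diam(\comeco(\pi))(\tfrac12)^{\diam(\pi)}=E[\diam(\comeco(\Pi^k))]$, yields the identity. Inserting the bound \eqref{estim_diam_word}, $E[\diam(\comeco(\Pi^k))]\leq 2\nu_k^{1-\epsilon_*}\beta_{k-1}$, and $(\tfrac12)^{\ell_k}=\beta_k^{-1}=(\nu_k\beta_{k-1})^{-1}$ then gives $2\nu_k^{-\epsilon_*}$, which is \eqref{estim_comeco}.

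I expect the main obstacle to be the bookkeeping behind this identity rather than any hard estimate: one has to verify that the beginning is a contiguous initial segment whose length is a function of the word alone, and then count correctly the translates placing $0$ inside it. This is exactly where the factor $\diam(\comeco(\pi))$ (in place of $\diam(\pi)$) appears, and it is what turns the expected size of the beginning into the probability that the origin lies in it; the remaining inputs — Corollary \ref{corol_diam} and \eqref{estim_diam_word} — are already available.
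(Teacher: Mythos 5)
Your proposal is correct and follows essentially the same route as the paper: the $k=k_*$ case is reduced to the diameter tail via Corollary \ref{corol_diam} with $j=\lfloor\beta_{k_*}^{\epsilon_*}\rfloor$, and the $k>k_*$ case rests on the identity $Q(0\in\comeco(B^k(0)))=(\tfrac12)^{\ell_k}E[\diam(\comeco(\Pi^k))]$ obtained by the same decomposition over the word and the left endpoint, combined with \eqref{estim_diam_word}. Nothing is missing.
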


\begin{rem}\label{rem:Kfinito} Observe that $\nu_k$ diverges
superexponentially in $k$, and so \eqref{estim_comeco} implies 
\begin{align*}
\sum_{k>k_*} Q(t \in \comeco(B^{k}(t))<\infty \,.
\end{align*}
Therefore, by the Lemma of Borel-Cantelli, 
$t\not \in \comeco(B^k(t))$ for all large enough $k$. As a consequence,
$Q(k_t<\infty)=1$.
\end{rem}

\begin{proof}[Proof of Lemma \ref{lemma_beginning}:]
It suffices to consider $t=0$.
On the one hand, by Corollary \ref{corol_diam}, 
\begin{align*}
Q\bigl(0\not \in \comeco(B^{k_*}(0))\bigr)&\leq 
Q\bigl(B^{k_*}(0)\setminus \comeco(B^{k_*}(0))\neq\varnothing\bigr)\\
&\leq Q\bigl(\diam(B^k(0))\geq 
\beta_{k_*}^{1+\epsilon_*}\bigr)\leq 
6\lfloor\beta_{k_*}^{\epsilon_*}\rfloor
e^{-\lfloor\beta_{k_*}^{\epsilon_*}\rfloor}\,.
\end{align*} 
On the other hand, using \eqref{expr_Q}, 
\eqref{exprPword}, \eqref{estim_diam_word},
\begin{align*}
Q(0\in\comeco(B^k(0)))
&=\sum_{a\leq 0}Q(0\in\comeco(B^k(0)), a^k(0)=a)\\
&=\sum_{a\leq 0}\sum_{\substack{\pi\in\cW^k:\\
\diam(\comeco(\pi))>|a|}}
Q(\Pi(B^k(0))=\pi, a^k(0)=a)\\
&=(\tfrac12)^{\ell_k}\sum_{\pi\in 
\cW^k}\diam(\comeco(\pi))(\tfrac12)^{\diam(\pi)
}\\
&=\beta_k^{-1} E[\diam(\comeco(\Pi^k))]\leq 2\nu_k^{-\epsilon_*}\,.\qedhere
\end{align*}

\end{proof}

\subsection{The number of active points}

\begin{defin}\label{Def_Bom} 
A $k$-block $B$ is \grasA{good} if
\begin{enumerate}
\item $\diam(B)<\tfrac12 \beta_k^{1+\epsilon_*}$, and if
\item $ n_1(k)\pordef  
\beta_k^{1-\epsilon_*}2^{-k}<\abs{\act(B)}<n_2(k)\pordef  
\beta_k^{1-\epsilon_*} \beta_{k-1}^{2\epsilon_*} $.
\end{enumerate}
If not good, $B$ is \grasA{bad}.
\end{defin}

\begin{prop}\label{lem_bom} 
If $k_*$ if large enough, then
for all $k> k_*$,
\begin{equation}\label{eq_estimblocobom}Q(B^k(0) \text{ is
bad})\leq 2^{-k}.
\end{equation}
\end{prop}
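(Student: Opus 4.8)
The plan is to argue by induction on $k$, working first with the $k$-word $\Pi^k$ (left endpoint pinned at the origin) and transferring to $B^k(0)$ only at the end. Since the diameter and the active points of a block are functions of its word alone, ``being bad'' is an event measurable with respect to $\Pi^k$; I write $c_k\pordef P(\Pi^k\text{ is bad})$ and aim to show that $c_k$ is super-exponentially small in $k$, which is far stronger than $2^{-k}$. Badness is the union of three events,
\[
E_1=\bigl\{\diam(\Pi^k)\geq\tfrac12\beta_k^{1+\epsilon_*}\bigr\},\quad
E_2=\bigl\{|\act(\Pi^k)|\leq n_1(k)\bigr\},\quad
E_3=\bigl\{|\act(\Pi^k)|\geq n_2(k)\bigr\},
\]
and I would estimate $P(E_1),P(E_2),P(E_3)$ separately.

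The events $E_1$ and $E_3$ are handled without the induction. Since $\diam(\Pi^k)=T^k$, Lemma \ref{lematk} with $j=\tfrac12\beta_k^{\epsilon_*}$ gives $P(E_1)\leq e^{-\frac12\beta_k^{\epsilon_*}}$. For $E_3$ I would use the inclusion $\act(\Pi^k)\subset\comeco(\Pi^k)$, whence $|\act(\Pi^k)|\leq\diam(\comeco(\Pi^k))$, and then Markov's inequality with the bound \eqref{estim_diam_word}:
\[
P(E_3)\leq\frac{E[\diam(\comeco(\Pi^k))]}{n_2(k)}
\leq\frac{2\nu_k^{1-\epsilon_*}\beta_{k-1}}{\beta_k^{1-\epsilon_*}\beta_{k-1}^{2\epsilon_*}}
=2\beta_{k-1}^{-\epsilon_*},
\]
using $\nu_k^{1-\epsilon_*}\beta_{k-1}=\beta_k^{1-\epsilon_*}\beta_{k-1}^{\epsilon_*}$. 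Both bounds are super-exponentially small in $k$.

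The estimate on $E_2$ is the crux, and it is here that the induction hypothesis enters. Decomposing $\Pi^k=\Pi_1^{k-1}\circ\cdots\circ\Pi_N^{k-1}$ as in Lemma \ref{lemapik}, with the $\Pi_i^{k-1}$ drawn from an i.i.d. sequence and $N$ the first closing index, set $m\pordef\lfloor\nu_k^{1-\epsilon_*}\rfloor$. On $\{N\geq m\}$ the beginning consists of exactly the first $m$ subwords, and on $\{\diam(\comeco(\Pi^k))<\beta_{k+1}\}$ the scale-$k$ cutoff is inactive; since $(k-1)$-activity of a point is determined by the internal structure of the $(k-1)$-block containing it, one then has $|\act(\Pi^k)|=\sum_{i=1}^m|\act(\Pi_i^{k-1})|$. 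A good $(k-1)$-word satisfies $|\act|>n_1(k-1)$, and because $\nu_k\beta_{k-1}=\beta_k$ one computes $m\,n_1(k-1)=2\,(1-o(1))\,n_1(k)$; hence it is enough that more than $\tfrac35 m$ of the first $m$ subwords be good to force $|\act(\Pi^k)|>n_1(k)$. Letting $\#\mathrm{bad}_m$ be the number of bad words among $\Pi_1^{k-1},\dots,\Pi_m^{k-1}$, this yields
\[
P(E_2)\leq P(N<m)+P\bigl(\diam(\comeco(\Pi^k))\geq\beta_{k+1}\bigr)+P\bigl(\#\mathrm{bad}_m\geq\tfrac25 m\bigr).
\]
The first term is $\leq m\,p_k\leq 2\nu_k^{-\epsilon_*}$ by Lemma \ref{lemapik}; the second is at most the $E_3$-bound since $\beta_{k+1}>n_2(k)$; and the third, with $\#\mathrm{bad}_m\sim\mathrm{Bin}(m,c_{k-1})$, is super-exponentially small by a Chernoff bound, as $m=\lfloor\nu_k^{1-\epsilon_*}\rfloor$ is super-exponentially large while $c_{k-1}<\tfrac25$. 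I expect this last step to be the main obstacle: the beginning contains super-exponentially many subwords, so a naive union bound over their badness (total mass $\sim m\,c_{k-1}$) is useless, and one must instead exploit their independence through concentration — this is exactly what makes the recursion close and the final bound far better than $2^{-k}$.

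Combining the three estimates gives $c_k$ super-exponentially small. The induction starts at the base scale $k_*$, where up to the cutoffs $|\act(\Pi^{k_*})|=\diam(\Pi^{k_*})$ concentrates around $\beta_{k_*}\in(n_1(k_*),n_2(k_*))$ by Lemma \ref{lematk}. Finally, to pass to $B^k(0)$ I would use the size-biased identity read off from \eqref{expr_Q},
\[
Q\bigl(B^k(0)\text{ bad}\bigr)=\beta_k^{-1}\sum_{\pi\ \mathrm{bad}}\diam(\pi)\,P(\Pi^k=\pi),
\]
together with Cauchy--Schwarz and $E[(T^k)^2]\leq C\beta_k^2$ (a consequence of Lemma \ref{lematk}), obtaining $Q(B^k(0)\text{ bad})\leq\sqrt{C\,c_k}$, still super-exponentially small. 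Choosing $k_*$ large enough renders all the above super-exponential bounds at most $2^{-k}$ for every $k>k_*$, which gives the claim.
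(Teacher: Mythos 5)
Your proposal is correct and follows essentially the same route as the paper: reduce to the pinned $k$-word, split badness into (diameter too large or too few active points) handled by induction with a binomial concentration bound on the number of bad $(k-1)$-subwords in $\comeco(\Pi^k)$, bound the excess of active points by Markov's inequality via $\act(\Pi^k)\subset\comeco(\Pi^k)$, and transfer to $B^k(0)$ through the size-biased identity \eqref{expr_Q}. The only deviations are cosmetic (a $\tfrac35$ rather than $\tfrac12$ threshold for good subwords, and Cauchy--Schwarz in place of the paper's conditioning on $\{\diam(B^k(0))\leq k\beta_k\}$ for the final transfer), and both work.
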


Since the event $\{B^k(0)\text{ is good}\}$ is determined by
the word of $B^k(0)$, we first obtain a similar result for
words. 

The notion of ``good'' extends naturally to $k$-words.
The set of good $k$-words is denoted $\cW^k_{\text{good}}$, and 
$\cW^k_{\text{bad}}\pordef \cW^k\setminus\cW^k_{\text{good}}$.

\begin{lem}\label{Prop_Pkbom} If $k_*$ if large enough, then
for all $k> k_*$,
\begin{equation}\label{eq_induc_k}
P\bigl(\Pi^k\in \cW^k_{\text{bad}}\bigr)\leq3\cdot 
2^{-3k}+2\beta_{k-1}^{-\epsilon_*}\,.
\end{equation}
\end{lem}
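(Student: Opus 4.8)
The plan is to prove the bound by induction on $k$, exploiting the self-similar decomposition $\Pi^k=\Pi^{k-1}_1\circ\cdots\circ\Pi^{k-1}_{N}$ with $N\pordef N(\Pi^k)$ and the geometric law of $N$ from Lemma \ref{lemapik}. Writing $M_k\pordef\lfloor\nu_k^{1-\epsilon_*}\rfloor$, the structural observation I would first establish is that, on the event $\{\diam(\Pi^k)<\tfrac12\beta_k^{1+\epsilon_*}\}$ (on which the level-$k$ cutoff $|t-a^k(t)|<\beta_{k+1}$ of Definition \ref{defativos} is met by every point of $\Pi^k$), a point is $k$-active iff it is $(k-1)$-active and lies in $\comeco(\Pi^k)=\Pi^{k-1}_1\circ\cdots\circ\Pi^{k-1}_{N\wedge M_k}$. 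Letting $\hat\Pi_1,\hat\Pi_2,\dots$ be the underlying i.i.d.\ sequence of $(k-1)$-words, this gives the recursion $\abs{\act(\Pi^k)}=\sum_{i=1}^{N\wedge M_k}\abs{\act(\hat\Pi_i)}$ on that event, and the inequality $\abs{\act(\Pi^k)}\leq\sum_{i=1}^{M_k}\abs{\act(\hat\Pi_i)}$ in all cases. I would then split the bad event into its three defining failures,
\[
P(\Pi^k\in\cW^k_{\text{bad}})\leq P(\diam\text{ too large})+P\bigl(\abs{\act(\Pi^k)}\geq n_2(k)\bigr)+P\bigl(\diam\text{ small},\ \abs{\act(\Pi^k)}\leq n_1(k)\bigr),
\]
and bound each term separately.

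The diameter term is immediate from Lemma \ref{lematk}: since $\tfrac12\beta_k^{1+\epsilon_*}=\tfrac12\beta_k^{\epsilon_*}\cdot\beta_k$, one gets $P(\diam(\Pi^k)\geq\tfrac12\beta_k^{1+\epsilon_*})\leq e^{-\frac12\beta_k^{\epsilon_*}}$, which is below $2^{-3k}$ once $k_*$ is large. The upper-count term—and this is the step that produces the $2\beta_{k-1}^{-\epsilon_*}$ of the statement—I would handle by a plain first-moment (Markov) estimate, deliberately avoiding any union bound over the $M_k$ sub-blocks: such a union bound is hopeless, as $M_k$ grows super-exponentially while the inductive bad-probability decays only like $2^{-3k}$. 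Using $\abs{\act(\hat\Pi_i)}\leq\diam(\hat\Pi_i)$ together with the mean estimate $E[\diam(\Pi^{k-1})]=E[T^{k-1}]\leq 2\beta_{k-1}$ from \eqref{esptk},
\[
P\bigl(\abs{\act(\Pi^k)}\geq n_2(k)\bigr)\leq\frac{M_k\,E[\diam(\Pi^{k-1})]}{n_2(k)}\leq\frac{2\nu_k^{1-\epsilon_*}\beta_{k-1}}{\beta_k^{1-\epsilon_*}\beta_{k-1}^{2\epsilon_*}}=2\beta_{k-1}^{-\epsilon_*},
\]
the last equality using $\beta_k=\nu_k\beta_{k-1}$; note that the exponent $2\epsilon_*$ in the definition of $n_2(k)$ is calibrated exactly so that this comes out as $\beta_{k-1}^{-\epsilon_*}$.

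For the lower-count term I would use the recursion and decompose according to whether $N\geq M_k$. On $\{N<M_k\}$ the estimate is purely about the geometric variable: $P(N<M_k)\leq M_k\,p_k\leq 2\nu_k^{-\epsilon_*}$, which decays super-exponentially (since $\nu_k$ diverges super-exponentially, cf.\ Remark \ref{rem:Kfinito}) and is thus $\leq 2^{-3k}$ for $k_*$ large. On $\{N\geq M_k\}$ the active count equals $\sum_{i=1}^{M_k}\abs{\act(\hat\Pi_i)}$; bounding $\abs{\act(\hat\Pi_i)}\geq n_1(k-1)\,\mathbf{1}[\hat\Pi_i\text{ good}]$ and using $n_1(k)=\tfrac12\nu_k^{1-\epsilon_*}n_1(k-1)$, the event $\{\abs{\act(\Pi^k)}\leq n_1(k)\}$ forces at least about $\tfrac12 M_k$ of the first $M_k$ i.i.d.\ sub-words to be bad, an event of binomial-tail probability $\bigl(4\,P(\Pi^{k-1}\in\cW^{k-1}_{\text{bad}})\bigr)^{M_k/2}$, negligible by the inductive hypothesis. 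Collecting the three contributions yields precisely $3\cdot 2^{-3k}+2\beta_{k-1}^{-\epsilon_*}$.

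The base case $k=k_*+1$ requires a direct (and easy) bound on $P(\Pi^{k_*}\in\cW^{k_*}_{\text{bad}})$, mainly to feed the binomial tail above. The part demanding the most care is the structural recursion for $\act(\Pi^k)$: I must verify that on $\{\diam\text{ small}\}$ the level-$k$ cutoff discards no active point, so that $\abs{\act(\Pi^k)}=\sum_{i\leq N\wedge M_k}\abs{\act(\hat\Pi_i)}$ holds as an exact identity (needed for the lower bound) and not merely as the inequality (which suffices for the Markov step). The conceptual crux, however, is the realization that the upper-count deviation should be controlled by a first-moment argument against the crude diameter mean $2\beta_{k-1}$, rather than by demanding that all sub-blocks be good; this is what makes the dominant error term $\beta_{k-1}^{-\epsilon_*}$ appear cleanly.
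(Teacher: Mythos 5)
Your proposal is correct and follows essentially the same route as the paper: the same three-way split of the bad event, the same first-moment (Markov) bound against $E[T^{k-1}]\leq 2\beta_{k-1}$ producing the $2\beta_{k-1}^{-\epsilon_*}$ term, and the same induction combining a geometric-law estimate for $\{N(\Pi^k)<\lfloor\nu_k^{1-\epsilon_*}\rfloor\}$ with a binomial tail forcing half of the sub-words to be bad. The only cosmetic difference is that the paper runs the induction on the sub-event $\cW^{k,1}_{\text{bad}}$ (diameter and lower count only), keeping the inductive bound at $3\cdot 2^{-3k}$, whereas you induct on the full bad set --- harmless, since the super-exponential binomial tail easily absorbs the extra $2\beta_{k-2}^{-\epsilon_*}$ term in the inductive hypothesis.
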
 

\begin{proof} We write
$\cW^{k}_{\text{good}}
=\cW^{k,1}_{\text{good}}\cap 
\cW^{k,2}_{\text{good}}$, 
where 
\begin{align*}
\cW^{k,1}_{\text{good}}& \pordef
\bigl\{\pi\in 
\cW^k:\,
\diam(\pi)<\tfrac12 \beta_k^{1+\epsilon_*}\mbox{ 
and }\abs{\act(\pi)}> n_1(k)\bigr\}\,,\\
\cW^{k,2}_{\text{good}}& \pordef \bigl\{\pi\in 
\cW^k:\,\abs{\act(\pi)}< n_2(k)\bigr\}\,.
\end{align*}

Let then $\cW^{k,i}_{\text{bad}}\pordef \cW^k\setminus
\cW^{k,i}_{\text{good}}$. 
We first prove that for all $k\geq k_*$,   
\begin{equation}\label{eq_pk1}
P\bigl(\Pi^k\in \cW^{k,1}_{\text{bad}}\bigr)\leq3\cdot 
2^{-3k}.\end{equation}
We will proceed by induction on $k$.
Let $k_*$ be large enough, such that for all 
$k\geq k_*$,
\[e^{-\tfrac{1}{2} \beta_{k}^{\epsilon_*}}
\leq 2^{-3 k}\,,\quad 2\nu_k^{-\epsilon_*}\leq 2^{-3k}\,.\]

(Observe that $k_*\nearrow \infty$ as $\epsilon_*\searrow 0$.)
We start with the case $k=k_*$:
\begin{align*}
P\bigl(\Pi^{k_*}\in \cW^{k_*,1}_{\text{bad}}\bigr)\leq 
P\bigl(\diam(\Pi^{k_*})&\geq \tfrac12 \beta_{k_*}^{1+\epsilon_*}\bigr)\\
+&P\bigl(\diam(\Pi^{k_*})<\tfrac12 \beta_{k_*}^{1+\epsilon_*},
\abs{\act(\Pi^{k_*})}\leq n_1(k_*) \bigr).
\end{align*}
By Lemma \ref{lematk},
\begin{equation}\label{eq_estim1}
P\bigl(\diam(\Pi^{k_*})\geq \tfrac12 \beta_{k_*}^{1+\epsilon_*}\bigr)
=P(T^{k_*}\geq \tfrac12  \beta_{k_*}^{1+\epsilon_*})\leq
e^{-\tfrac{1}{2} \beta_{k_*}^{\epsilon_*}}\leq 2^{-3 k_*}\,.
\end{equation}

On the other hand, $\act(\Pi^{k_*})$ is an interval, and 
$\diam(\Pi^{k_*})<\tfrac12 \beta_{k_*}^{1+\epsilon_*}$
implies that 
$\abs{\act(\Pi^{k_*})}=\diam(\Pi^{k_*})=T^{k_*}$.
Therefore,
\begin{align*}  P\bigl(\diam(\Pi^{k_*})<\tfrac12 
\beta_{k_*}^{1+\epsilon_*},
\abs{\act &(\Pi^{k_*})}\leq n_1(k_*) \bigr)\\
&\leq P\bigl(T^{k_*}\leq  n_1(k_*) \bigr)\\
&\leq n_1(k_*)(\tfrac12)^{\ell_{k_*}}
=\beta_{k_*}^{-\epsilon_*}2^{-k_*}
\leq 2\cdot 2^{-3k_*}\,.
\end{align*}

Therefore, \eqref{eq_pk1} is proved for $k=k_*$. Suppose that 
\eqref{eq_pk1} holds for $k-1$. Remember that $\Pi^k$ is a concatenation 
of $(k-1)$-words, denoted by $\Pi_j$, $j=1,\dots,N(\Pi^k)$. We define 
the events:
\begin{align*}
A_1&\pordef \{d(\Pi^k)<\tfrac12  \beta_k^{1+\epsilon_*}\}\,,\\
A_2&\pordef \{N(\Pi^k)>\nu_k^{1-\epsilon_*}\}\,,\\
A_3&\pordef \Big\{\substack{\text{at least half of 
the $(k-1)$-words}\\\text{ in $\comeco(\Pi^k)$
are in $\cW^{k-1,1}_{\text{good}}$ }}\Big\}\,.
\end{align*}

We claim that $A_1\cap A_2\cap A_3\subset \{\Pi^k\in
\cW^{k,1}_{\text{good}}\}$. Indeed, $A_1$ ensures that the first 
condition in $\cW^{k,1}_{\text{good}}$ is satisfied. 
Furthermore, in 
$A_1\cap A_2\cap A_3$, every $(k-1)$-word $\Pi_j\subset\comeco(\Pi^k)$ 
is at distance $\leq
\tfrac12 \beta_{k}^{1+\epsilon_*}\leq \beta_{k+1}$ of the origin, and 
therefore, each active point of $\Pi_j$ is active in $\Pi^k$. 
As a consequence,

\begin{align*}
\abs{&\act(\Pi^k)}=\sum_{\substack{\Pi_j\subset \comeco(\Pi^k)}}
\abs{\act(\Pi_j)}\\ 
&\geq \sum_{\substack{\Pi_j\subset \comeco(\Pi^k):\\
\Pi_j\in\cW^{k-1,1}_{\text{good}}}}
\abs{\act(\Pi_j)}
> n_1(k-1)\sum_{\substack{\Pi_j\subset \comeco(\Pi^k):\\ 
\Pi_j\in\cW^{k-1,1}_{\text{good}}}}1 \geq n_1(k-1)\cdot \tfrac12 \nu_{k}^{1-\epsilon_*}\equiv
n_1(k)\,.
\end{align*}

Therefore, $\Pi^k\in\cW^{k,1}_{\text{good}}$. 
It follows that 
\[P\big(\Pi^k\in \cW^{k,1}_{\text{bad}}\big)
\leq
P\big(A_1^c\big)+  P\big(A_2^c\big)+  P\big(A_2\cap 
A_3^c\big)\,.\]

As in \eqref{eq_estim1}, 
$P(A_1^c)\leq 2^{-3k}$. 
By Lemma \ref{lemapik},
\begin{align*}
P(A_2^c)&=\sum_{j=1}^{\lfloor\nu_k^{1-\epsilon_*}\rfloor}  P(N(\Pi^k)=j)\\
&=\sum_{j=1}^{\lfloor\nu_k^{1-\epsilon_*}\rfloor}(1-p_k)^{j-1}p_k
\leq \nu_k^{1-\epsilon_*}p_k\leq 2\nu_k^{-\epsilon_*} \leq 2^{-3k}.
\end{align*}
On $A_2$, $\comeco(\Pi^k)$ 
contains exactly $\lfloor\nu_k^{1-\epsilon_*}\rfloor$ $(k-1)$-words. 
Therefore, using the induction hypothesis  \eqref{eq_pk1} for $k-1$,
\begin{align*} 
P(A_2\cap A_3^c)&
=P\Bigl(A_2\cap \Bigl\{\substack{\text{at least half of the ($k-1$)-words 
}\\ \text{of }
\comeco(\Pi^k)\text{ are in }\cW^{k-1,1}_{\text{bad}}} \Bigr\}\Bigr)\\
&\leq  P\Bigl(\Bigl\{\substack{\text{at least half of the 
}\lfloor\nu_{k}^{1-\epsilon_*}\rfloor (k-1)
\text{-words}\\ \text{of }\comeco(\Pi^k) \text{ are in 
}\cW^{k-1,1}_{\text{bad}} }\Bigr\}\Bigr)\\
&\leq
\sum_{j=\tfrac12\lfloor\nu_{k}^{1-\epsilon_*}\rfloor}^{\lfloor\nu_{k}^{
1-\epsilon_*}\rfloor}
\binom{\lfloor\nu_{k}^{1-\epsilon_*}\rfloor}{j}
\bigl(3\cdot 2^{-3(k-1)}\bigr)^j\\
&\leq 
\bigl(2^{-3(k-1)+6}\bigr)^{\tfrac12\lfloor\nu_{k}^{1-\epsilon_*}\rfloor}
<2^{-3k}.      
\end{align*}
This proves \eqref{eq_pk1}.
It remains to prove that for all $k> k_*$,  
\begin{equation}\label{eq_pk2}   
P\big(\Pi^k\in\cW^{k,2}_{\text{bad}}\big)\leq 
2\beta_{k-1}^{-\epsilon_*}\,.
\end{equation}
Since $\act(\Pi^k)\subset \comeco(\Pi^k)$, we estimate
$\diam(\comeco(\Pi^k))$.
We define $T_1^{k-1}:=T^{k-1}$ and for $i>1$,  
\[T_i^{k-1}:=\inf\bigl\{j>T_{i-1}^{k-1}: 
(\eta_{j-\ell_k+1},\cdots,\eta_{j})=I_k
\bigr\}.\]
The increments $\tau_i^{k-1}\pordef T^{k-1}_i-T_{i-1}^{k-1}$ 
are i.i.d., and $E[\tau_1^{k-1}]=E[T^{k-1}]\leq 2\beta_{k-1}$.
Moreover,
$\diam(\comeco(\Pi^k))\leq\sum_{i=1}^{\lfloor\nu_k^{1-\epsilon_*}\rfloor
}\tau_i^{k-1}$. 
Therefore, 
\begin{align*}
P\big(\abs{\act(\Pi^k)}\geq n_2(k)\big)
&\leq   P\Big(\sum_{i=1}^{\lfloor\nu_k^{1-\epsilon_*}\rfloor} 
\tau_{i}^{k-1}\geq  n_2(k)\Big)\leq \frac{E[\tau_1^{k-1}]}{\beta_{k-1}^{1+\epsilon_*}}\leq 
2\beta_{k-1}^{-\epsilon_*}\,.
\end{align*}
This proves \eqref{eq_pk2}. Together, \eqref{eq_pk1} 
and \eqref{eq_pk2} give \eqref{eq_induc_k}.
\end{proof}

\begin{proof}[Proof of Proposition \ref{lem_bom}:] 
Take $k> k_*$, where $k_*$ was defined in the proof of
Lemma \ref{Prop_Pkbom}. We have
\begin{align}
Q(B^k(0) \text{ is bad})\leq 
Q\bigl(\diam(B^k(0))&\geq k \beta_k\bigr)+\nonumber\\
Q\bigl(B^k(0) &\text{ is bad}, \diam(B^k(0))\leq
k \beta_k\bigr)\label{decompruim}\,.
\end{align}
By Corollary \ref{corol_diam}, $Q\bigl(\diam(B^k(0))\geq k
\beta_k\bigr)\leq 6ke^{-k}$.
Then,
\begin{align*}
Q\bigl(B^k(0) \text{ is bad},
\diam(B^k(0))\leq k 
\beta_k\bigr)&=(\tfrac12)^{\ell_k}\sum_{\substack{\pi\in
\cW^k_{\text{bad}}\\ \diam(\pi)\leq k 
\beta_k}}\diam(\pi)(\tfrac12)^{\diam(\pi)}
\\
&\leq k\sum_{\substack{\pi\in
\cW^k_{\text{bad}}}}(\tfrac12)^{\diam(\pi)}
\equiv k   P\bigl(\Pi^k\in  \cW^k_{\text{bad}}\bigr)\,.
\end{align*}

Using Lemma \ref{Prop_Pkbom}, 
\begin{equation}\label{desig2}
Q\bigl(B^k(0) \text{ is bad}\bigr)\leq 6ke^{-k}+ k(2^{-3k}+ 
2k\beta_{k-1}^{-\epsilon_*})\,.
\end{equation}
Taking $k_*$ large enough,
this proves \eqref{eq_estimblocobom}.
\end{proof}

\section{Proofs of Theorems \ref{teo1} and \ref{teo2meio}}\label{SecProoft1}

The proofs of all the results will study the process $X$ under the
quenched measure $P_\omega$, using environments $\omega$ 
for which
the influence of the remote past on the present (for example on a local
event like $\{X_0=+\}$) can be computed
and related to $\varphi$ and to the sequence $h_{k}$.

\subsection{The event $\{\infty\to k\}$}\label{subsec:eventoperco}

To start, consider a set of variables $\{X_s, s\in R\}$, where $R$ is a finite region of
$\bZ$. There clearly exists some $k(1)\geq k_*$ such that $R\subset
B^{k(1)}(0)$. Furthermore, using Remark \ref{rem:Kfinito}, we can
take $k(1)$ sufficiently large,  and guarantee that $R\subset
B^{k(1)}(0)\setminus\comeco(B^{k(1)}(0))$.
But then, by the definition of the $g$-function constructed with $\psi_t^\omega$,
the only way by which the remote past influences the variables in $R$ is through
the value of the 
average of the variables $\{X_t,\,t\in \act(B^{k(1)}(0))\}$.\\

Repeating the same procedure with  $B^{k(1)}(0)$ in place of $R$, 
we deduce that the distribution of $\{X_t,\,t\in \act(B^{k(1)}(0))\}$ is
entirely determined by the values of 
$\{X_t,\,t\in \act(B^{k(2)}(0))\}$ for some sufficiently large $k(2)$,
etc.\\

Our aim will be to make sure that $k(i+1)=k(i)+1$ for all large
$i$, and that the sizes of the sets $\act(B^{k(i)}(0))$ are under control. 
We thus define, for all $k> k_*$,

\begin{equation}\label{eqcalcesp}
\{\infty\to k\}\pordef \bigcap_{j\geq k} \bigl\{
0\not \in \comeco(B^{j}(0)),\,B^{j}(0)\text{ is good}\bigr\}\,.
\end{equation}

The notation used suggests that the event is of the type described earlier in Figure 
\ref{fig_setSvarios}. Indeed, 
let $\omega\in\{\infty\to k\}$. Take $j\geq k$, and $t\in\act(B^j(0))$. 
Since $0\notin \comeco(B^{j+1}(0))$ and
$B^{j+1}(0)=B^{j+1}(t)$, we have that $t\notin \comeco(B^{j+1}(0))$ which
implies $t \notin \act(B^{j+1}(t))$.  Therefore, $k_t=j+1$. 
Moreover, since $B^{j+1}(0)$ is good, we have that $d(B^{j+1}(0))\leq
\tfrac12\beta_{j+1}^{1+\epsilon_*}\leq \beta_{j+2}$. This implies that
$S_t=\act(B^{j+1}(0))$, and 
\begin{equation}\label{eq:psiigual}\psi_t^\omega=h_{j+1}\varphi\Bigl(\frac{1}{|{\act(B^{j+1}(0))}|}
\sum_{s\in \act(B^{j+1}(0))}X_s\Bigr)\,.\end{equation}
Therefore, on $\{\infty\to k\}$, for all $j\geq k$, the 
variables $\{X_t\,,t\in \act(B^j(0))\}$ are i.i.d., and
their distribution is fixed by the value of the magnetization of 
$\{X_t\,,t\in \act(B^{j+1}(0))\}$.
That is, the distribution of the process $X$
on any finite region $X$ is related to the behavior of the non-homogeneous Markov
sequence
\[\xi_{j}\pordef \frac{1}{|\act(B^{j}(0))|}\sum_{s\in\act(B^{j}(0))}
X_s\,,\quad j\geq k\,.\]
The transition probability of the chain will be studied using the following
relation, which holds on the event $\{\infty\to
k\}$, for all $j\geq k$:
\begin{equation}\label{eq:basictrans}
E_\omega[\xi_{j}\dado \xi_{j+1}]=h_{j+1}\varphi(\xi_{j+1})\,,
\end{equation}

\begin{figure}[H]
\begin{center}
\input{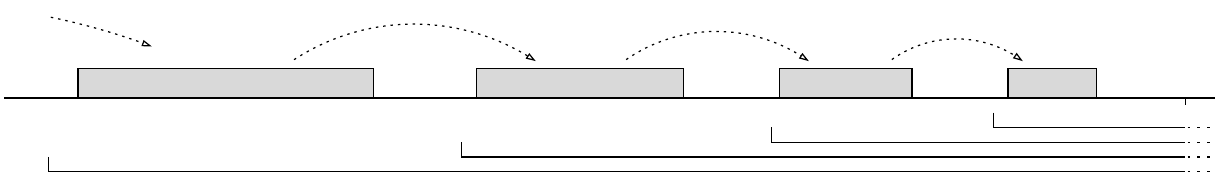_t}
\caption{The quenched distribution of the process $X$, 
on the event $\{\infty\to k\}$.}
\label{Figura_vol_finito}
\end{center}
\end{figure}

\begin{prop}\label{lemaperc} Let $k\geq k_*$.
There exists $\lambda(\epsilon_*,k)>0$ with
$\lambda(\epsilon_*,k)\searrow 0$ when $k\to \infty$, such that
\begin{equation}\label{eq:estim_conn}Q(\infty\rightarrow k)\geq
1-\lambda(\epsilon_*,k)\,.\end{equation}
Moreover, there exists a random scale $K=K(\omega)$, $Q(K<\infty)=1$, such that
\[
Q(\infty \to K)=1\,.
\]
\end{prop}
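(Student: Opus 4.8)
The plan is to bound the complement of $\{\infty\to k\}$ by a union bound over scales, and then to exploit the monotonicity of these events in $k$ to extract the random scale $K$.

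First I would estimate $Q(\{\infty\to k\}^c)$. Writing $A_j\pordef\{0\notin\comeco(B^j(0)),\,B^j(0)\text{ good}\}$, so that $\{\infty\to k\}=\bigcap_{j\geq k}A_j$, the complement is a union and the union bound gives
\[Q(\{\infty\to k\}^c)\leq \sum_{j\geq k}\Bigl(Q\bigl(0\in\comeco(B^j(0))\bigr)+Q\bigl(B^j(0)\text{ is bad}\bigr)\Bigr).\]
For $k>k_*$ every index $j\geq k$ satisfies $j>k_*$, so I may insert the estimate \eqref{estim_comeco} of Lemma \ref{lemma_beginning} and the estimate \eqref{eq_estimblocobom} of Proposition \ref{lem_bom}, bounding the right-hand side by $\sum_{j\geq k}\bigl(2\nu_j^{-\epsilon_*}+2^{-j}\bigr)$. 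Since $\nu_j=\beta_j/\beta_{j-1}=2^{\ell_j-\ell_{j-1}}$ grows superexponentially in $j$ (as recorded in Remark \ref{rem:Kfinito}), this is the tail of a convergent series, and I would set
\[\lambda(\epsilon_*,k)\pordef\sum_{j\geq k}\bigl(2\nu_j^{-\epsilon_*}+2^{-j}\bigr),\]
which satisfies $\lambda(\epsilon_*,k)\searrow 0$ as $k\to\infty$. This yields \eqref{eq:estim_conn}. (For the boundary value $k=k_*$ one simply takes $\lambda=1$; there the condition $0\notin\comeco(B^{k_*}(0))$ is in fact atypical, which is harmless since all that is needed is $\lambda\searrow 0$.)

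For the second statement I would first observe that the events are \emph{nested}: restricting the intersection $\bigcap_{j\geq k}A_j$ to a smaller index set can only enlarge it, so $\{\infty\to k\}=A_k\cap\{\infty\to k+1\}\subset\{\infty\to k+1\}$ for every $k$. Continuity of $Q$ from below, combined with \eqref{eq:estim_conn}, then gives
\[Q\Bigl(\bigcup_{k\geq k_*}\{\infty\to k\}\Bigr)=\lim_{k\to\infty}Q(\infty\to k)\geq\lim_{k\to\infty}\bigl(1-\lambda(\epsilon_*,k)\bigr)=1.\]
Thus $Q$-almost every $\omega$ lies in $\{\infty\to k\}$ for some finite $k$, and I would define
\[K(\omega)\pordef\inf\{k\geq k_*:\omega\in\{\infty\to k\}\},\]
which is measurable because each $\{\infty\to k\}$ is. By construction $\{K<\infty\}=\bigcup_{k\geq k_*}\{\infty\to k\}$ has full measure, and $\omega\in\{\infty\to K(\omega)\}$ whenever $K(\omega)<\infty$; hence $Q(\infty\to K)=1$.

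I expect no genuine conceptual obstacle here: the proof is essentially bookkeeping. The two points requiring care are applying \eqref{estim_comeco} and \eqref{eq_estimblocobom} only for indices $j>k_*$ (automatic once $k>k_*$), and verifying that the superexponential growth of $\nu_j$ makes $\sum_j\nu_j^{-\epsilon_*}$ summable. The union bound, the nesting of the events, and the passage to the random scale via continuity from below are all routine.
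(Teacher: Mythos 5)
Your proposal is correct and follows essentially the same route as the paper: a union bound over scales $j\geq k$, the insertion of \eqref{estim_comeco} and \eqref{eq_estimblocobom}, and summability coming from the superexponential growth of $\nu_j$; the paper extracts $K$ by invoking Borel--Cantelli, which is equivalent to your monotonicity-plus-continuity argument. Your explicit handling of the boundary case $k=k_*$ (where the two estimates are only stated for $j>k_*$) is a small point of care that the paper glosses over.
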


\begin{proof}
If $k_*$ is as large as in Proposition \ref{lem_bom}, then for $k\geq k_*$
\begin{align}
Q(\{\infty\to k\}^c)\nonumber&\leq \sum_{{j}\geq k}
\bigl\{
Q(B^{j}(0) \text{ is bad})+Q(0\in \comeco(B^{j}(0)))
\bigr\} \nonumber \\
&\leq \sum_{{j}\geq k}
\bigl\{2^{-{j}} +2\nu_{{j}}^{-\epsilon_*}\bigr\}\,, \label{eqkvisivel}
\end{align}
which is summable in $k$. The existence of $K$ follows by the Borel-Cantelli Lemma. 
\end{proof}

\subsection{The measures $P^+_\omega$, $P^-_\omega$ and their 
maximal coupling}\label{Sec_Cons_X}

Our proofs will rely on the use of two particular processes specified by $g$,
$Z^+=(X^+,\omega)$ and  $Z^-=(X^-,\omega)$, symmetric with respect to
each other in the sense that
\begin{equation}\label{eq:simetriaPs}
\bP(X_s^-=+)=\bP(X_s^+=-)=1-\bP(X^+_s=+)\,.
\end{equation}
Each $X^\#$ will actually
be the coordinate process associated to a probability measure
$P_\omega^\#$ on $\{\pm\}^\bZ$ constructed with a pure boundary condition
$\#\in\{+,-\}$. The construction is standard.\\

For $x,y\in\{\pm\}^\bZ$, let $x_s^t\pordef(x_t,\cdots,x_s)$ and
$x_{-\infty}^t\pordef(x_t,x_{t-1},\cdots)$. 
If $\omega\in \{0,1\}^{\bZ}$, define
\[ g^\omega_t\bigl(x_t|x_{-\infty}^{t-1}\bigr)  \pordef 
\tfrac12\{1 +x_t\psi^\omega_t\bigl(x_{-\infty}^{t-1}\bigr)\}\,.\]
Define also the cylinder
$[x]_s^t\pordef \{y\in\{\pm\}^\bZ:y_i =x_i,~i\in [s,t]\}$ and $x_s^ty_{-\infty}^{s-1} 
\pordef (x_t,,\cdots,x_s,y_{s-1},y_{s-2},\cdots)$.
For each $N\in \bN$, $\eta\in \{\pm \}^\bZ$, we 
define a probability measure on $\{\pm\}^{(-N,\infty)}$ by setting
\[P_\omega^{\eta,N}\bigl([x]_{-N+1}^{s}\bigr)\pordef
g^\omega_{-N+1}\bigl(x_{-N+1}
|\eta_{-\infty}^{-N}\bigr)
\prod_{t=-N+2}^{s} g^\omega_t\bigl(x_{t}
|x_{-N+1}^{t-1}\eta_{-\infty}^{-N}\bigr)\,.\]
When $\eta^1\leq \eta^2$ (pointwise),  
$P_\omega^{\eta^1,N}$ and $P_\omega^{\eta^2,N}$ can be coupled as follows.
Consider an i.i.d. sequence of random variables
$(U_t)_{t> -N}$, each with uniform distribution on $[0,1]$.
We construct two processes, $X^1$ and $X^2$, through a 
sequence of pairs, $\Delta_t=\binom{X_t^2}{X_t^1}$, $t>-N$,
in such a way that
$(X^\#_t)_{t>-N}$ has distribution $P_\omega^{\eta^\#,N}$ and such that $X_t^1\leq
X_t^2$ for all $t>-N$.
For $s\leq -N$, set $x^\#_s\pordef \eta_s^\#$.
Assume that the pairs $\Delta_s=\binom{x_s^2}{x_s^1}$ 
have been sampled for all $s<t$, and that these satisfy
$x_s^1\leq x_s^2$. Let 
\begin{equation} \label{eq:defyt}
\Delta_t
=\binom{X_t^2}{X_t^1}\pordef 
\binom{+}{-}1_{A_t}+\binom{+}{+}1_{B_t}+\binom{-}{-}1_{C_t}\,,
\end{equation}
where
\begin{align}
A_t&\pordef \bigl\{0\leq U_t<g_t^\omega(+\dado (x^2)_{-\infty}^{t-1} )-g_t^\omega(+\dado
(x^1)_{-\infty}^{t-1})\bigr\}, \label{eq:defati} \\ 
B_t&\pordef \bigl\{g_t^\omega(+\dado (x^2)_{-\infty}^{t-1} )-g_t^\omega(+\dado
(x^1)_{-\infty}^{t-1})\leq U_t<g_t^\omega(+\dado (x^2)_{-\infty}^{t-1} )\bigr\},\nonumber\\
C_t&\pordef \bigl\{g_t^\omega(+\dado (x^2)_{-\infty}^{t-1}) \leq U_t
\leq 1\bigr\}.\nonumber
\end{align}

We of course have $\mathsf{P}(X_t^1\leq X_t^2)=1$, and 

\begin{align*}
\mathsf{P}\bigl( X_t^2=+ \dado X_{t-1}^2=x^2_{t-1},\cdots,X_{-N+1}^2=x_{-N+1}^2\bigr)
&=\mathsf{P}\bigl(A_t\cup B_t\big)\\
&= g^\omega_t\bigl(+ \dado(x^2)_{-\infty}^{t-1}\bigr)\,,
\end{align*}
and so the distribution of $(X_t^2)_{t>-N}$ is given by $P_\omega^{\eta^2,N}$.
Similarly, 
\begin{align*}
\mathsf{P}\bigl( X_t^1=+ \dado X_{t-1}^1=x^1_{t-1},\cdots,X_{-N+1}^1=x_{-N+1}^1\bigr)
&=\mathsf{P}\bigl(B_t\big)\\
&= g^\omega_t\bigl(+ \dado(x^1)_{-\infty}^{t-1}\bigr)\,,
\end{align*}
and so
the distribution of $(X_t^1)_{t>-N}$ is given by $P_\omega^{\eta^1,N}$.\\

The above coupling allows to extract information about the measures
$P_\omega^{\eta^\#,N}$. 
First, $x^1\leq x^2$ implies $g_t^\omega(+\dado (x^1)_{-\infty}^{t-1} )\leq
g_t^\omega(+\dado (x^2)_{-\infty}^{t-1})$, and so
we always have 
\begin{equation}\nonumber
P_\omega^{\eta^1,N}(X_t=+)\leq P_\omega^{\eta^2,N}(X_t=+)\,,\quad t>-N\,.
\end{equation}
More generally, if $f:\{\pm \}^{(-N+1,\infty)}\to \bR$ is an increasing local function
(that is: non-decreasing in each variable $x_s$ and depending only on a finite number of coordinates), then
\begin{equation}\label{eq:monotaccopl} 
E_\omega^{\eta^1,N}[f]\leq E_\omega^{\eta^2,N}[f]\,.
\end{equation}
Using the previous item, one can also construct two processes $P_\omega^{+}$ and
$P_\omega^{-}$ by taking monotone limits.
Namely, let $P_\omega^{+,N}$ be constructed as above using the boundary condition
$\eta_s\equiv +$ for all $s$. Using \eqref{eq:monotaccopl}, it is easy to see that for all local 
increasing $f$,
\[E_\omega^{+,N+1}[f]\leq E_\omega^{+,N}[f]\,.\]
which allows to define $E^{+}_\omega[f]\pordef \lim_{N\to \infty} E^{+,N}_\omega[f]$.
Since this extends to all continuous function, it defines a measure $P_\omega^+$.
It can then be verified that 
the coordinate process $X=(X_t)_{t\in\bZ}$ defined by
$ X_t(x)\pordef x_t$ satisfies \eqref{eq:distr_cond_X} (with $P_\omega^+$ in place of $P_\omega$).

\subsection{Non-uniqueness when $\alpha<\frac{1-\epsilon_*}{2}$}

Let $\xi^\#$ denote the average of $X^\#$ over $\act(B^k(0))$.
To obtain non-unicity, we will show
that when $\alpha<\frac{1-\epsilon_*}{2}$, 
\begin{equation}\label{eq:desigbasica}
\bP(\xi^+(k)>0)>\tfrac12>\bP(\xi^-(k)>0)\,,
\end{equation}
for some large enough $k$.
Actually, due to the attractiveness of $g$, the following lower bound holds for all
$\omega$:
\begin{equation}\label{cota_simples}
P_\omega^+(\xi_{k}\geq 0)\geq \tfrac12\,.
\end{equation}
\begin{prop}\label{propnonuniq}
Under the hypotheses of Theorem \ref{teo1}, with
$\alpha<\frac{1-\epsilon_*}{2}$, there exists for all
$k'> k_*$ some $\epsilon(k')$,
$\epsilon(k')\searrow 0$ as $k'\to
\infty$, such that 
\begin{equation}\label{Eq_estim_princ}
\forall ~\omega\in \{\infty \to k'\}\,,\quad
P^+_\omega(\xi_{k'}>0)\geq 1-\epsilon(k')\,.
\end{equation}
\end{prop}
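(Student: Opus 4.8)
The plan is to exploit the explicit Markov structure of the magnetizations $\xi_j$ that holds on $\{\infty\to k'\}$, and to propagate positivity down the scales starting from the $+$ boundary condition. Fix $\omega\in\{\infty\to k'\}$. By the discussion around \eqref{eq:psiigual} and \eqref{eq:basictrans}, for every $j\ge k'$ the variables $\{X_s:s\in\act(B^j(0))\}$ are, conditionally on the configuration on $\act(B^{j+1}(0))$, i.i.d.\ with common mean $h_{j+1}\varphi(\xi_{j+1})$. For the pure majority rule $\varphi=\varphi_{PMR}$ this mean equals $h_{j+1}\sign(\xi_{j+1})$; in particular, conditionally on $\{\xi_{j+1}>0\}$ the spins are i.i.d.\ with mean exactly $+h_{j+1}$, independently of the actual value of $\xi_{j+1}$. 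This last point is what makes the per-scale estimate below uniform.

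The first step would be a one-step concentration estimate. Writing $n_j\pordef|\act(B^j(0))|$ and applying a Hoeffding bound to the i.i.d.\ sum $n_j\xi_j$, whose conditional mean is $n_jh_{j+1}$, I would obtain
\[
P^{+,N}_\omega(\xi_j\le 0\Dado \xi_{j+1}>0)\le \delta_j\pordef e^{-\frac12 n_j h_{j+1}^2}.
\]
On $\{\infty\to k'\}$ the block $B^j(0)$ is good, so $n_j\ge n_1(j)=\beta_j^{1-\epsilon_*}2^{-j}$; together with $h_{j+1}^2=\beta_j^{-2\alpha}$ this gives $n_jh_{j+1}^2\ge\beta_j^{1-\epsilon_*-2\alpha}2^{-j}$. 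Since $\beta_j=2^{\ell_j}$ grows superexponentially, the hypothesis $\alpha<\tfrac{1-\epsilon_*}{2}$ makes the exponent $1-\epsilon_*-2\alpha$ strictly positive, so $\delta_j\to 0$ superexponentially and $\epsilon(k')\pordef\sum_{j\ge k'}\delta_j$ is finite with $\epsilon(k')\searrow 0$ as $k'\to\infty$.

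The second step initializes the propagation from the boundary. Working at finite volume with the $+$ boundary condition, recall that $P^+_\omega=\lim_{N\to\infty}P^{+,N}_\omega$ on increasing events. For each $N$, since $a^{j}(0)\to-\infty$ and, on the good event, $\diam(\comeco(B^{j}(0)))$ is negligible compared with $|a^{j}(0)|$, there is a scale $J^*=J^*(N)$ such that $\act(B^{J^*+1}(0))\subset(-\infty,-N]$ while all active sets $\act(B^i(0))$, $k'\le i\le J^*$, lie in the free region $(-N,\infty)$. The frozen spins force $\xi_{J^*+1}=+1$ deterministically, and then the Markov property together with the one-step estimate and the telescoping inequality $\prod(1-\delta_i)\ge 1-\sum\delta_i$ would yield
\[
P^{+,N}_\omega(\xi_{k'}>0)\ge \prod_{i=k'}^{J^*}(1-\delta_i)\ge 1-\sum_{i\ge k'}\delta_i=1-\epsilon(k').
\]
Since this bound is uniform in $N$, letting $N\to\infty$ would give $P^+_\omega(\xi_{k'}>0)\ge 1-\epsilon(k')$, as claimed.

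I expect the main obstacle to be the bookkeeping in the second step: one must locate a scale $J^*(N)$ at which the $+$ boundary genuinely freezes an entire active set to $+$ while keeping all lower scales in the free region, so that the clean recursion \eqref{eq:basictrans} applies unchanged between $k'$ and $J^*$; controlling the order of the two limits ($N\to\infty$ versus the scale limit) is the delicate point. The concentration step itself is routine once the good-block lower bound $n_j\ge n_1(j)$ and the value $h_{j+1}^2=\beta_j^{-2\alpha}$ are inserted; the reason the sign of $\xi_j$ is faithfully transmitted is precisely that, for $\varphi_{PMR}$, the conditional mean $+h_{j+1}$ does not depend on the magnitude of $\xi_{j+1}$, which makes the per-scale bound uniform and the telescoping product legitimate. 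This mechanism is the exact analogue of boundary-condition persistence in the one-dimensional Ising chain, where the boundary magnetization survives iff $\sum_j e^{-2J(j)}<\infty$, with $e^{-2J(j)}$ playing the role of $\delta_j$.
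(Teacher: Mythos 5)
Your proposal is correct and follows essentially the same route as the paper: a one-step concentration bound for $\xi_j$ given $\sigma_{j+1}=+$ (the paper's Lemma \ref{lemaJomega}, proved via Bernstein rather than Hoeffding, with bound $e^{-h_{j+1}^2 n_1(j)/16}$), combined with the Markov structure on $\{\infty\to k'\}$ to propagate the sign of the $+$ boundary condition down the scales as a product $\prod_{k\ge k'}(1-e^{-h_{k+1}^2 n_1(k)/16})$, which converges and tends to $1$ as $k'\to\infty$ precisely because $\alpha<\tfrac{1-\epsilon_*}{2}$ makes the series \eqref{eq:seriealphaconv} summable. The bookkeeping you flag as delicate (placing $-N$ between two successive active sets so the recursion runs cleanly from the frozen scale down to $k'$) is handled in the paper exactly as you describe, and your observation that the conditional mean under $\varphi_{PMR}$ is $+h_{j+1}$ regardless of the magnitude of $\xi_{j+1}$ is the same uniformity the paper exploits.
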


\begin{proof}[Proof of Theorem \ref{teo1}, item $(1)$:]
Using \eqref{Eq_estim_princ},
\begin{align*} \bP\bigl(\xi^+(k)>0)&\geq 
\int_{\{\infty \to k\}}P_\omega^+\bigl(\xi_{k}>0\bigr)
Q(d\omega)\\
& \geq \bigl(1-\epsilon(k)\bigr)Q(\infty \to k)\,.
\end{align*}
By taking $k$ large, this lower bound is $>\tfrac12$.
This proves \eqref{eq:desigbasica}, and thereby item \ref{it1} of
Theorem \ref{teo1}.  \end{proof}

\begin{rem}
As the proof of Proposition \ref{propnonuniq} will show, it is possible to
distinguish $X^+$ and $X^-$ even at the origin. Namely it can be shown that
\begin{equation}\label{eq:cotainfestrela}
\forall~
\omega\in \{\infty\to k_*+1\}\,,\quad P^+_\omega(X_0=+)\geq  \tfrac12+\tau\,,
\end{equation}
where $\tau>0$
{once $k_*$ is taken large depending on $\alpha$}. Then, by \eqref{cota_simples} and 
\eqref{eq:cotainfestrela},
\begin{align*}
\bP(X_0^+=+)&\geq (\tfrac12+\tau)Q(\infty\to k_*+1)+\tfrac12 Q(\{\infty\to
k_*+1\}^c)\\
&=\tfrac12+\tau Q(\infty\to
k_*+1)\\
&>\tfrac12\,.
\end{align*} 
Nevertheless, we prefer avoiding having $k_*$ depend on $\alpha$.
\end{rem}

\begin{rem}
In \cite{BHS}, non-uniqueness was obtained by
showing that when $\alpha<\frac{1-\epsilon_*}{2}$, any process $\bP$
specified by $g$ must satisfy 
\[
\bP
\Bigl(
\Bigl\{
\lim_{k\to\infty}1_{\{\xi_{k}>0\}}=1
\Bigr\}
\cup
\Bigl\{
\lim_{k\to\infty}1_{\{\xi_{k}<0\}}=1
\Bigr\}
\Bigr)=1\,.
\]
From this, the existence of two distinct processes 
can be deduced, using an argument based on symmetry and ergodic decomposition.
\end{rem}

\subsection{The signature of uniqueness}
As we have seen, the distribution of $X$ on any finite region can be 
studied via the information contained in the sequence $\xi_{k}$.
On the one hand, 
we have seen in \eqref{eq:desigbasica} that non-unicity is observed through some
asymmetry in the distribution of $\xi_{k}$ when $k$ is large. 
Uniqueness, on the other hand will essentially be characterized by showing that the 
variables $\xi_{k}$ are symmetric:
\begin{equation}\label{eq:signature} 
E_\omega[\xi_{k}]=0\,\quad \text{ for all large }k\,.  
\end{equation}

Observe that regardless of the details of $\varphi$,
\begin{equation}\label{eq:xitendeazero}
\lim_{k\to\infty}E_\omega[\xi_{k}]=0\,
\end{equation}
always holds.
Namely, if $k'$ is large enough so that $\{\infty\to k\}$ for all $k\geq k'$, then
\begin{align*} E_\omega[\xi_{k}]&=E_\omega\bigl[E_\omega[\xi_{k}\dado
\xi_{k+1}]\bigr]\\
&=h_{k+1}E_\omega[\varphi(\xi_{k+1})]=O\bigl(h_{k+1}\bigr)\,.
\end{align*}
More can be said:
when conditioned on $\xi_{k+1}$, $\xi_{k}$ is a Bernoulli sum of i.i.d. variables $X_s$
with expectation $h_{k+1}E_\omega[\varphi(\xi_{k+1})]$.
Therefore, for any fixed $\epsilon>0$, if 
$k$ large enough so that $h_{k+1}\leq \epsilon/2$, a standard large deviation
estimate yields
\begin{equation}\label{eq:estimprobdelta} 
P_\omega\bigl(|\xi_{k}|> \epsilon\dado \xi_{k+1}\bigr)\leq e^{-c|\act(B^k(0))|}\leq
e^{-cn_1(k)}\,,
\end{equation}
where $c=c(\epsilon)>0$ (we have used the fact that $B^k(0)$ is good). Therefore, 
\begin{equation}\label{eq:BorelCantxismall}
\forall \epsilon>0\,,\quad
P_\omega\bigl(
|\xi_{k}|\leq \epsilon\text{ for all large enough }k\bigr)=1\,.
\end{equation}
Therefore, the variables $\xi_{k}$ almost surely tend to zero when $k\to \infty$, and
observing some (a)symmetry in their distribution is a delicate problem.\\

Unicity will be obtained with the help of the following
criterion, whose proof can be found in Appendix \ref{app:unicidade}.

\begin{teo}
Assume that
\begin{equation}\label{eq:uniquenesscrit} 
E_\omega^+[X_t]=0=E_\omega^-[X_t]\,\quad \forall ~t\in \bZ\,.
\end{equation}
Then, $P_\omega^+=P_\omega^-$, and 
any measure $P_\omega$ satisfying
\eqref{eq:distr_cond_X} coincides with $P_\omega^+$ and $P_\omega^-$.
\end{teo}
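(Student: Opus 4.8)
The plan is to combine the monotone coupling of Section~\ref{Sec_Cons_X} with the elementary fact that two stochastically ordered measures having the same one-point expectations must coincide, and then to squeeze any third consistent measure between the two extremal ones. The whole argument takes place at fixed $\omega$.

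First I would upgrade the finite-volume couplings into a single coupling of the infinite-volume measures $P_\omega^-$ and $P_\omega^+$ with ordered trajectories. For each $N$, the construction \eqref{eq:defyt}--\eqref{eq:defati} applied to the boundary conditions $\eta^1\equiv-$ and $\eta^2\equiv+$ produces a measure $\mathsf{P}_N$ on pairs $(X^-,X^+)\in\{\pm\}^\bZ\times\{\pm\}^\bZ$ whose marginals are $P_\omega^{-,N}$ and $P_\omega^{+,N}$ and for which $X_t^-\le X_t^+$ for all $t$, $\mathsf{P}_N$-almost surely. Since $\{\pm\}^\bZ\times\{\pm\}^\bZ$ is compact, the family $(\mathsf{P}_N)_N$ has a weakly convergent subsequence; let $\mathsf{Q}$ be its limit. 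The event $\{X_t^-\le X_t^+\text{ for all }t\}$ is closed, hence still has full $\mathsf{Q}$-measure, and the marginals of $\mathsf{Q}$ are precisely the monotone limits used to define $P_\omega^-$ and $P_\omega^+$ in Section~\ref{Sec_Cons_X}. Thus $\mathsf{Q}$ couples $P_\omega^-$ and $P_\omega^+$ with $X_t^-\le X_t^+$ for every $t$, $\mathsf{Q}$-a.s.

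Next I would invoke the hypothesis \eqref{eq:uniquenesscrit}. For each fixed $t$ the gap $X_t^+-X_t^-$ is nonnegative $\mathsf{Q}$-a.s., while $E_{\mathsf{Q}}[X_t^+-X_t^-]=E_\omega^+[X_t]-E_\omega^-[X_t]=0$. A nonnegative random variable with zero mean vanishes a.s., so $X_t^+=X_t^-$ $\mathsf{Q}$-a.s. for every $t$; intersecting these countably many full-measure events gives $X^+=X^-$ $\mathsf{Q}$-a.s., and since $X^+$ and $X^-$ have laws $P_\omega^+$ and $P_\omega^-$, this forces $P_\omega^+=P_\omega^-$. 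To handle a general $P_\omega$ satisfying \eqref{eq:distr_cond_X}, let $f$ be an increasing local function depending only on coordinates $\ge -N+1$. Conditioning on the past $(X_s)_{s\le-N}$ and applying \eqref{eq:distr_cond_X} coordinate by coordinate, one writes $E_\omega[f]=\int E_\omega^{\eta,N}[f]\,d\mu_N(\eta)$, where $\mu_N$ is the law of the past under $P_\omega$. The monotonicity \eqref{eq:monotaccopl} gives $E_\omega^{-,N}[f]\le E_\omega^{\eta,N}[f]\le E_\omega^{+,N}[f]$ for every $\eta$, whence $E_\omega^{-,N}[f]\le E_\omega[f]\le E_\omega^{+,N}[f]$; letting $N\to\infty$ yields $E_\omega^-[f]\le E_\omega[f]\le E_\omega^+[f]$. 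Since $P_\omega^-=P_\omega^+$, the bounds coincide, so $E_\omega[f]=E_\omega^+[f]$ for all increasing local $f$. Every local function is a difference of two increasing local functions (add a large multiple of $\sum_s x_s$ to make it increasing), so the measures agree on all local functions, and therefore $P_\omega=P_\omega^+=P_\omega^-$.

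I expect the delicate point to be the two passages $N\to\infty$: one must check that the weak subsequential limit of the finite-volume couplings both preserves the pointwise order and carries the correct infinite-volume marginals, and that the representation $E_\omega[f]=\int E_\omega^{\eta,N}[f]\,d\mu_N(\eta)$ is legitimate for an arbitrary consistent $P_\omega$. These are standard compactness and monotonicity facts, but they are where the substance of the sandwich argument resides; everything else reduces to the single observation that a nonnegative coupled gap with vanishing mean must be identically zero.
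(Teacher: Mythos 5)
Your proof is correct, but it reaches the conclusion $P_\omega^+=P_\omega^-$ by a genuinely different route than the paper. The paper never passes to an infinite-volume coupling: it reduces everything to the monomials $n_B=\prod_{t\in B}\tfrac12(1+x_t)$, observes that $G=\sum_{t\in B}n_t-n_B$ is increasing, and applies the finite-volume monotonicity \eqref{eq:monotaccopl} to get $0\leq E_\omega^+[n_B]-E_\omega^-[n_B]\leq \sum_{t\in B}(E_\omega^+[n_t]-E_\omega^-[n_t])=0$; this is a purely algebraic correlation-inequality argument that needs no compactness. You instead extract a weak subsequential limit $\mathsf{Q}$ of the ordered finite-volume couplings and use the observation that a nonnegative gap $X_t^+-X_t^-$ with zero mean vanishes almost surely. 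Your version is more conceptual and makes the role of hypothesis \eqref{eq:uniquenesscrit} transparent, at the price of having to check that the limit coupling keeps the order (the event is closed, fine) and carries the marginals $P_\omega^\pm$ (fine, since $E_\omega^{\pm,N}[f]\to E_\omega^\pm[f]$ for all local $f$ by the construction in Section \ref{Sec_Cons_X}); the paper's version buys economy by staying entirely at the level of expectations of increasing local functions. For the second half the two arguments are closer in spirit but still differ: the paper conditions an arbitrary consistent $P_\omega$ on the block average $\xi_k$ and identifies $E_\omega[n_B\mid\xi_k=+1]$ with a finite-volume plus-measure, which leans on the block structure of the model, whereas you condition on the entire past $(X_s)_{s\leq -N}$ and iterate \eqref{eq:distr_cond_X} to get the classical sandwich $E_\omega^{-,N}[f]\leq E_\omega[f]\leq E_\omega^{+,N}[f]$ for increasing local $f$; your variant is the more standard and more model-independent one, and your closing remark that every local function is a difference of two increasing local functions plays the same role as the paper's reduction to the family $\{n_B\}$.
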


\subsection{Uniqueness when $\alpha>\frac{1-\epsilon_*}{2}$}

\begin{prop}\label{propespnula} 
Let $\bP=Q\otimes P_\omega$ be the distribution of any process specified
by $g$.
Under the hypotheses of Theorem \ref{teo1}, with
$\alpha>\frac{1-\epsilon_*}{2}$, for $Q$-almost
all environment $\omega$, and for all large enough $k$,
\begin{equation}\label{eq:espxitendeazero} 
\lim_{M\to\infty}E_\omega\bigl[\xi_{k}\Dado \xi_{M}\bigr]= 0\,\quad
P_\omega\text{-almost surely.}
\end{equation}
\end{prop}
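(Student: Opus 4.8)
The plan is to use the Markov structure of $(\xi_j)_{j\ge k}$ on $\{\infty\to k\}$ to reduce the statement to the vanishing of an infinite product of one-step ``sign-transmission'' coefficients. Fix $\omega$ with $k\ge K(\omega)$, so that $\{\infty\to k\}$ holds by Proposition \ref{lemaperc}. As noted around \eqref{eq:psiigual}, for every $j\ge k$ and every process specified by $g$, the variables $\{X_s:s\in\act(B^j(0))\}$ are, conditionally on the values of $X$ on $\act(B^{j+1}(0))$, i.i.d. $\{\pm1\}$-valued with common mean $h_{j+1}\varphi(\xi_{j+1})$, and $\act(B^{j+1}(0))$ lies entirely in the past of $\act(B^j(0))$. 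Consequently $\xi_M,\xi_{M-1},\dots,\xi_k$ is a Markov chain whose one-step law is given by \eqref{eq:basictrans} and depends on the process only through $\omega$.

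Under the hypotheses of Theorem \ref{teo1} one has $\varphi=\varphi_{PMR}$, hence $\varphi(\xi_{j+1})=\sign(\xi_{j+1})$. Conditioning on $\xi_{j+1}$, $\xi_j$ is then the average of $N_j\pordef|\act(B^j(0))|$ i.i.d. $\{\pm1\}$-variables of mean $h_{j+1}\sign(\xi_{j+1})$, so by oddness of the sign,
\[
E_\omega[\sign(\xi_j)\Dado\xi_{j+1}]=c_j\,\sign(\xi_{j+1}),\qquad c_j\pordef E_\omega[\sign(\xi_j)\Dado\xi_{j+1}>0]\in[0,1),
\]
with $c_j$ a deterministic function of $\omega$ through $N_j$ and $h_{j+1}$. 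Setting $S_j\pordef E_\omega[\sign(\xi_j)\Dado\xi_M]$, the tower and Markov properties give $S_j=c_jS_{j+1}$, whence $S_{k+1}=\bigl(\prod_{j=k+1}^{M-1}c_j\bigr)\sign(\xi_M)$. Applying \eqref{eq:basictrans} once more at scale $k$,
\[
E_\omega[\xi_k\Dado\xi_M]=h_{k+1}S_{k+1}=h_{k+1}\Bigl(\prod_{j=k+1}^{M-1}c_j\Bigr)\sign(\xi_M).
\]
Since $|E_\omega[\xi_k\Dado\xi_M]|\le h_{k+1}\prod_{j=k+1}^{M-1}c_j$, a bound independent of the value of $\xi_M$, it suffices to show $\prod_{j\ge k+1}c_j=0$ for $Q$-almost every $\omega$.

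I would bound $c_j$ by the elementary fact that for a sum of $N$ i.i.d. $\{\pm1\}$-variables of small mean $h$ one has $|E[\sign(\textstyle\sum)]|\le C\sqrt{N}\,h$, which follows from $\mathrm{d}_{\mathrm{TV}}$ between the biased and anti-biased laws being $\le C\sqrt{N}\,h$ (Pinsker applied to the corresponding binomials). Thus $c_j\le C\sqrt{N_jh_{j+1}^2}$, and since $h_{j+1}=\beta_j^{-\alpha}$ it is enough to prove $N_j\beta_j^{-2\alpha}\to0$, for this forces $c_j\to0$ and hence $\prod_{j\ge k+1}c_j=0$. This is where $\alpha>\frac{1-\epsilon_*}{2}$ must enter, via the fact that the relevant scale of $N_j$ is $\beta_j^{1-\epsilon_*}$, for which $N_j\beta_j^{-2\alpha}\approx\beta_j^{1-\epsilon_*-2\alpha}\to0$.

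I expect this size estimate to be the main obstacle. The bound $N_j\le n_2(j)=\beta_j^{1-\epsilon_*}\beta_{j-1}^{2\epsilon_*}$ coming from goodness of $B^j(0)$ is too crude: it yields $N_j\beta_j^{-2\alpha}\to0$ only for $\alpha>\frac{1-\epsilon_*}{2}+\frac{\epsilon_*}{1+\epsilon_*}$, leaving a gap up to the stated threshold. To close it I would establish, for every fixed $\delta>0$ and all large $j$, the sharper almost-sure bound $N_j\le\beta_j^{1-\epsilon_*}\beta_{j-1}^{\delta}$, by first proving $E_Q[N_j]\le C(k_*)\,\beta_j^{1-\epsilon_*}$ (the active points roughly multiply by $\nu_j^{1-\epsilon_*}$ per scale, while the extra $\beta_{j-1}^{\epsilon_*}$ in the diameter of the beginning is not inherited by the active set) and then invoking Markov's inequality and Borel--Cantelli. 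For any $\alpha>\frac{1-\epsilon_*}{2}$ one then chooses $\delta$ small enough that $N_j\beta_j^{-2\alpha}\le\beta_j^{1-\epsilon_*-2\alpha+\delta/(1+\epsilon_*)}\to0$, giving $c_j\to0$ and the proposition. Making rigorous both this expectation bound on $N_j$ and the binomial total-variation estimate are the technical points requiring care; they are the uniqueness-regime counterparts of the large-deviation estimates postponed to Section \ref{sectionestimprinc}.
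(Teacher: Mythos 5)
Your reduction is correct and takes a genuinely different route from the paper's. The paper handles the conditioning on $\xi_{M}$ by introducing the last scale $S_M$ at which $\xi$ is non-positive, using attractiveness to get $E_\omega[\xi_{k'}\mid S_M\geq k',\xi_M]\leq 0$, and then bounding $P_\omega(S_M=k'-1\mid\xi_M)$ by a product of one-step ``no sign change'' probabilities, each controlled via the Stirling lower bound of Lemma \ref{lemaJomega}. You instead exploit the exact identity $E_\omega[\sign(\xi_j)\mid\xi_M]=c_j\,E_\omega[\sign(\xi_{j+1})\mid\xi_M]$ --- valid because $\varphi_{PMR}$ sees only the sign and because the conditional law of $\xi_j$ given $\xi_{j+1}$ is odd under $\xi_{j+1}\mapsto-\xi_{j+1}$ --- so that everything reduces to $\prod_jc_j=0$; attractiveness and the stopping-scale decomposition are not needed. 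The Pinsker bound $c_j\leq C\sqrt{N_j}\,h_{j+1}$ is correct, and since $1-c_j\geq P_\omega(\xi_j\leq0\mid\sigma_{j+1}=+)$, your criterion $\prod_jc_j=0$ is implied by the paper's divergence condition, so the two arguments meet at the same quantitative question. Your version is cleaner and, in my view, preferable.

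The one incomplete step is exactly the one you flag: with the bound $N_j<n_2(j)=\beta_j^{1-\epsilon_*}\beta_{j-1}^{2\epsilon_*}$ supplied by goodness, $N_jh_{j+1}^2\to0$ fails for $\alpha\in\bigl(\tfrac{1-\epsilon_*}{2},\tfrac{1-\epsilon_*}{2}+\tfrac{\epsilon_*}{1+\epsilon_*}\bigr)$. You should note that the paper's own proof has the same difficulty at the same spot: the claim \eqref{eq:seriealphadiv} that $\sum_ke^{-2h_{k+1}^2n_2(k)}=\infty$ for all $\alpha>\tfrac{1-\epsilon_*}{2}$ fails on that same range, since there $h_{k+1}^2n_2(k)=\beta_k^{1-\epsilon_*-2\alpha}\beta_{k-1}^{2\epsilon_*}\approx\beta_k^{1-\epsilon_*-2\alpha+2\epsilon_*/(1+\epsilon_*)}$ grows doubly exponentially. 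So the sharper almost-sure bound $N_j\leq\beta_j^{1-\epsilon_*}\beta_{j-1}^{\delta}$ you propose is not optional polish; it (or an equivalent tightening of the upper bound in the definition of ``good'') is needed to reach the stated threshold by either route. Your plan for it is sound: $E_Q[|\act(\Pi^j)|]\leq\lfloor\nu_j^{1-\epsilon_*}\rfloor E_Q[|\act(\Pi^{j-1})|]$ holds because the event $\{i\leq N(\Pi^j)\}$ depends only on the first $i-1$ of the i.i.d.\ $(j-1)$-words, giving $E_Q[|\act(\Pi^j)|]\leq C(k_*)\beta_j^{1-\epsilon_*}$; passing from the word to the size-biased block $B^j(0)$ costs only a factor $j$ after discarding $\{\diam(B^j(0))\geq j\beta_j\}$ as in the proof of Proposition \ref{lem_bom}; Markov and Borel--Cantelli then finish. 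Until that estimate is written out, your argument is a well-targeted plan at its crux rather than a complete proof.
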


\begin{proof}[Proof of Theorem \ref{teo1}, item \ref{it2}:]
By Proposition \ref{lemaperc}, we can consider a fixed environment
$\omega$ for which $K=K(\omega)<\infty$. Take $k\geq K$ large.
We will 
consider $P_\omega^+$, and show that \eqref{eq:espxitendeazero} implies
\eqref{eq:uniquenesscrit}.\\

We know that $\xi_{k}$ is a sum of identically distributed 
variables $X_s$, $s\in \act(B^k(0))$.
By \eqref{eq:espxitendeazero}, $P_\omega^+$-almost surely, for each such $s$,
\begin{align*}\nonumber
\lim_{M\to\infty}P_\omega^+\bigl(X_s=+\Dado \xi_{M}\bigr)
&= \lim_{M\to\infty}\tfrac12\bigl(E_\omega^+\bigl[X_s\Dado \xi_{M}\bigr]+1\bigr)\\
&= \lim_{M\to\infty}\tfrac12\bigl(E_\omega^+\bigl[\xi_{k}\Dado \xi_{M}\bigr]+1\bigr)=
\tfrac12\,.
\end{align*}
This implies that for all large enough $k$, the distribution of $\xi_{k}$ under 
$P_\omega^+(\cdot\dado \xi_{M})$ converges when $M\to\infty$ to a symmetric
distribution.
We can show that this extends to any variable $X_t$ as follows. Take  
$k$ large enough so that $t\not\in \act(B^k(0))$, and write
\begin{equation}\label{eq:machintruc} 
E_\omega^+\bigl[X_t\Dado \xi_{M}\bigr]=
E^+_\omega\bigl[
E^+_\omega[X_t\dado \xi_{k}]\Dado \xi_{M}
\bigr]\,.
\end{equation}
Since $f(x)\pordef E^+_\omega[X_t\dado \xi_{k}=x]$ is odd, $f(-x)=-f(x)$, and since
$\xi_{k}$ converges to a symmetric variable, the right-hand side of 
\eqref{eq:machintruc} converges to zero when $M\to\infty$.
By dominated convergence, we thus get
\[ E_\omega^+[X_t]=\lim_{M\to \infty}E^+_\omega\bigl[
E_\omega^+\bigl[X_t\Dado \xi_{M}\bigr] \bigr]=0\,.
\]
Similarly, $E_\omega^-[X_t]=0$, and this finishes the proof.
\end{proof}

\subsection{Proofs of Propositions
\ref{propnonuniq} and \ref{propespnula}}\label{sectionestimprinc}

The sequence $\xi_{k}\in [-1,1]$ is Markovian and temporally non-homogeneous; we can
nevertheless estimate its transition probabilities with relative precision.
Since the
BHS-model considers the pure majority rule $\varphi_{PMR}$, 
its study can be reduced to the
sign variables 
\[\sigma_{k}\pordef\begin{cases}+1& \text{if } \xi_{k}> 0\,,\\
\phantom{+}0 & \text{if } \xi_{k}=0\,,\\
-1&\text{if }\xi_{k}<0.\end{cases}\]

Remember that if $B^{k}(0)$ is good, then
\begin{equation}\label{tamact} 
n_1(k)< \abs{\act(B^{k}(0))}<n_2(k),
\end{equation}
where the leading term in each $n_\#(k)$ is $\beta_k^{1-\epsilon_*}$. 
But since $h_{k+1}=\beta_k^{-\alpha}$, 
\begin{align}
h_{k+1}^2n_\#(k)
\begin{cases}
\nearrow\infty &\text{ if }\alpha<\frac{1-\epsilon_*}{2}\,,\\
\searrow0 &\text{ if }\alpha>\frac{1-\epsilon_*}{2}\,.
\end{cases}\label{eqconv}
\end{align}

We study the sign changes of the sequence $\xi_{k}$:
\begin{lem}\label{lemaJomega} Assume $\varphi=\varphi_{PMR}$. Let $\omega\in
\{\infty\to k'\}$. There exists $c_0>0$ such that for all large $k\geq k'$, 
\begin{enumerate}
 \item\label{it1lem} For all $h_k\searrow 0$,
\[P_\omega\bigl(\sigma_{k}\leq0\dado\sigma_{k+1}=+\bigr)
 \leq e^{-h_{k+1}^2 n_1(k)/16}\,.
\]
Moreover, if $h_{k+1}^2n_2(k)\nearrow \infty$, 
\[ P_\omega\bigl(\sigma_{k}\leq0\dado\sigma_{k+1}=+\bigr)
\geq c_0 e^{-2h_{k+1}^2 n_2(k)} \,.\]
\item
In particular, if $h_{k}=\beta_{k-1}^{-\alpha}$, 
\[
P_\omega\bigl(\sigma_{k}\leq0\dado\sigma_{k+1}=+\bigr)
\begin{cases}
\leq e^{-h_{k+1}^2 
n_1(k)/16}&\text{ if }\alpha<\tfrac{1-\epsilon_*}{2}\,, \\
\geq c_0/2 &\text{ if }\alpha>\tfrac{1-\epsilon_*}{2}\,.
\end{cases}
\]
\end{enumerate}
\end{lem}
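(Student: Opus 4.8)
The plan is to reduce, on the event $\{\infty\to k'\}$, the whole statement to a one–dimensional large–deviation estimate for a sum of i.i.d.\ signs, handle the upper bound by a Chernoff estimate, and handle the lower bound by a change of measure to the symmetric law. Fix $\omega\in\{\infty\to k'\}$ and $k\geq k'$. By the discussion leading to \eqref{eq:psiigual} and \eqref{eq:basictrans}, conditionally on $\xi_{k+1}$ the variables $\{X_s:s\in\act(B^k(0))\}$ are i.i.d.\ with common mean $h_{k+1}\varphi_{PMR}(\xi_{k+1})$. Since $\varphi_{PMR}(\xi_{k+1})=+1$ on $\{\sigma_{k+1}=+\}$ and this value does not depend on the precise value of $\xi_{k+1}$, conditioning on the \emph{event} $\{\sigma_{k+1}=+\}$ makes the $X_s$ i.i.d.\ with $P_\omega(X_s=+\dado\sigma_{k+1}=+)=\tfrac12(1+h_{k+1})$. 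Writing $n\pordef\abs{\act(B^k(0))}$ and $S\pordef\#\{s:X_s=+\}$, we have $\{\sigma_k\leq 0\}=\{\xi_k\leq 0\}=\{S\leq n/2\}$, and since $B^k(0)$ is good on $\{\infty\to k'\}$ we have $n_1(k)<n<n_2(k)$. Everything reduces to bounding $P(S\leq n/2)$, where $S$ is $\mathrm{Binomial}(n,\tfrac{1+h_{k+1}}2)$ and $\sum_s X_s$ has mean $n h_{k+1}$.

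For the upper bound, $\{S\leq n/2\}=\{\sum_s X_s\leq 0\}$ is a lower deviation of $n h_{k+1}$ below the mean, so Hoeffding's inequality for sums of variables in $[-1,1]$ gives $P(S\leq n/2)\leq e^{-n h_{k+1}^2/2}$. Since $n>n_1(k)$ and $\tfrac12\geq\tfrac1{16}$, this yields $P_\omega(\sigma_k\leq 0\dado\sigma_{k+1}=+)\leq e^{-h_{k+1}^2 n_1(k)/16}$, using no hypothesis on $h_k$ beyond $h_k\searrow 0$.

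For the lower bound I would change measure to the symmetric product law $\widetilde{P}$ (each $X_s$ uniform on $\{\pm\}$), under which the likelihood ratio is $\prod_s(1+h_{k+1}X_s)=(1+h_{k+1})^S(1-h_{k+1})^{n-S}$. Restricting the expectation to a window $\{S\in(n/2-w,\,n/2]\}$, on which $S\leq n/2$ holds and the ratio is comparable to $(1-h_{k+1}^2)^{n/2}\geq e^{-n h_{k+1}^2}$, one gets a lower bound of the form $\widetilde{P}(S\in(n/2-w,\,n/2])\cdot e^{-n h_{k+1}^2}$ times a correction from the tilt over the window. In the strong–bias regime $h_{k+1}^2 n_2(k)\nearrow\infty$ I would take $w=O(1)$ and estimate $\widetilde{P}(S=\lfloor n/2\rfloor)\geq c/\sqrt n$ by Stirling, obtaining $P(S\leq n/2)\geq (c/\sqrt n)\,e^{-n h_{k+1}^2}\geq c_0\, e^{-2h_{k+1}^2 n_2(k)}$: the extra factor $e^{-h_{k+1}^2 n_2(k)}$ gained by replacing the exponent $1$ by $2$ swallows the polynomial $1/\sqrt n$, precisely because $h_{k+1}^2 n_2(k)$ dominates $\log n$. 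This gives item (1). Specializing to $h_k=\beta_{k-1}^{-\alpha}$, the dichotomy \eqref{eqconv} splits item (2): when $\alpha<\tfrac{1-\epsilon_*}2$ one has $h_{k+1}^2 n_1(k)\nearrow\infty$ and the upper bound already gives the first line; when $\alpha>\tfrac{1-\epsilon_*}2$ one has weak bias $h_{k+1}^2 n_2(k)\searrow 0$, so $\sqrt n\,h_{k+1}=\sqrt{n h_{k+1}^2}\to 0$, and taking $w\asymp\sqrt n$ makes both the tilt correction and $(1-h_{k+1}^2)^{n/2}$ tend to $1$ while $\widetilde{P}(S\in(n/2-\sqrt n,\,n/2])$ stays bounded below by the central limit theorem, yielding $P_\omega(\sigma_k\leq 0\dado\sigma_{k+1}=+)\geq c_0/2$.

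The main obstacle is the lower bound: one must track the polynomial prefactor coming from the local (Stirling) estimate and choose the tilting window correctly in each regime, verifying in the strong–bias case that $h_{k+1}^2 n_2(k)$ grows faster than $\log n$ — which follows from the explicit $\beta_k=2^{\ell_k}$, $\ell_k=\lceil(1+\epsilon_*)^k\rceil$, so that the exponential growth of $h_{k+1}^2 n_2(k)$ dominates the merely logarithmic $\log n$ — so the gap between the exponents $1$ and $2$ genuinely absorbs the $1/\sqrt n$. The upper bound, by contrast, is a routine Chernoff computation.
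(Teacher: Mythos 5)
Your reduction to a binomial estimate, the Hoeffding/Chernoff upper bound, and the change of measure to the symmetric law with a Stirling (local CLT) estimate are all in the spirit of the paper's proof, which bounds the same probability from below by summing the exact point probabilities $P_\omega(\sum_s X_s=-L)$ over $L$ between $0$ and $\sqrt{\abs{\act(B^k(0))}}$ and applying Stirling. The upper bound and the weak-bias case of item (2) are fine as you present them.

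There is, however, a genuine gap in your strong-bias lower bound. With the window $w=O(1)$ you obtain $P(S\leq n/2)\geq (c/\sqrt n)\,e^{-nh_{k+1}^2}$, and to conclude $\geq c_0e^{-2h_{k+1}^2 n_2(k)}$ you must absorb the prefactor $1/\sqrt n$ into the spare factor $e^{-h_{k+1}^2 n_2(k)}$, which requires $h_{k+1}^2 n_2(k)\geq \tfrac12\log n+O(1)$. This does \emph{not} follow from the hypothesis $h_{k+1}^2 n_2(k)\nearrow\infty$: your justification conflates the growth of $n_2(k)$ (which is indeed doubly exponential in $k$, so that $\log n_2(k)\asymp \ell_k$ grows \emph{exponentially} in $k$) with the growth of the product $h_{k+1}^2 n_2(k)$, which under the stated hypothesis may tend to infinity arbitrarily slowly — take for instance $h_{k+1}^2= k/n_2(k)$, for which $h_{k+1}^2n_2(k)=k$ while $\log n\asymp(1+\epsilon_*)^k$, and the inequality fails. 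The repair is exactly what you already do in the weak-bias regime, and what the paper does in both regimes: take the window of width $\asymp\sqrt n$ unconditionally. Then $\widetilde P\bigl(S\in(n/2-\sqrt n,\,n/2]\bigr)$ is bounded below by a constant with no $1/\sqrt n$ loss, and the tilt over the window costs only $e^{-C h_{k+1}\sqrt n}=e^{-C\sqrt{n h_{k+1}^2}}\geq e^{-C^2}e^{-n h_{k+1}^2}$ (since $C\sqrt x\leq \max(C^2,x)$ for $x\geq0$), which combined with $(1-h_{k+1}^2)^{n/2}\geq e^{-nh_{k+1}^2}$ and $n<n_2(k)$ yields $c_0e^{-2h_{k+1}^2n_2(k)}$ with no assumption on the rate of divergence. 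This is precisely the origin of the exponent $2$ in the statement, and it is the step the paper encapsulates in the factor $e^{-h_{k+1}\sqrt{A(k)}/4}$ of its Stirling estimate.
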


\begin{proof}
On $\omega\in \{\infty\to k'\}$, each block  $B^{k}(0)$, $k\geq k'$, 
is good. In
particular,  \eqref{tamact} holds  and by \eqref{eqcalcesp}, under
$P_\omega\bigl(\,\cdot\,\dado\sigma_{k+1}=+\bigr)$, the
variables $\{X_s, s\in \act(B^{k}(0))\}$ are i.i.d. with
\[
E_\omega[X_s\dado\sigma_{k+1}=+]=h_{k+1}\varphi_{PMR}(\xi_{k+1})\equiv h_{k+1}\,.
\]
Let $X'_s\pordef X_s-E_\omega[X_s\dado\sigma_{k+1}=+]$.
By the Bernstein Inequality,
\begin{align*}
P_\omega(&\sigma_{k}\leq0\dado\sigma_{k+1}=+) \\
& = P_\omega\Bigl(
\tfrac{1}{\abs{\act(B^{k}(0))}}\sum_{s\in \act(B^{k}(0))}
X'_s \leq -h_{k+1} \DDado\sigma_{k+1}=+\Bigr)\\
& \leq e^{-h_{k+1}^2 |\act(B^k(0))|/16} \leq e^{-h_{k+1}^2 n_1(k)/16} .
\end{align*}
To prove the lower bound we let
$A(k)\pordef \abs{\act(B^k(0))}$ and let $\mathcal{L}_k$ denote the set of 
integers between $0$ and $\sqrt{A(k)}$ that have the same parity as $A(k)$.
Using Stirling's formula:
\begin{align*}
 P_\omega\bigl(&\sigma_{k}\leq0\dado\sigma_{k+1}=+\bigr)\\
&\geq \sum_{L\in\cL_k}   P_\omega\Bigl(\sum_{s\in\act(B^k(0))}
X_s=-L\DDado\sigma_{k+1}=+\Bigr)\\
&=\sum_{L\in\cL_k}\binom{A(k)}{\frac{A(k)+L}{2}}
\bigl(\tfrac12(1+h_{k+1})\bigr)^{\frac{A(k)-L}{2}}
\bigl(\tfrac12(1-h_{k+1})\bigr)^{
\frac{A(k)+L}{2}}\\
&\geq
\tilde{c_0} 
e^{-h_{k+1}^2 A(k)}
e^{-\big(h_{k+1}\sqrt{A(k)}\big)/4}\,.\qedhere
\end{align*}
\end{proof}

\begin{proof}[Proof of Proposition \ref{propnonuniq}] 
Let $\omega\in \{\infty\to k'\}$.
The probability we are interested in is defined using the
$+$ boundary condition: 
\[
P_\omega^+(\xi_{k'}>0)=\lim_{N\to
\infty}P_\omega^{+,N}(\xi_{k'}>0)\,.
\]
We choose $N$ large, always to
be between two successive sets $\act(B^{M-1}(0))$,
$\act(B^{M}(0))$. A lower bound is obtained by
assuming that all the sign of the boundary condition travels
down to $\xi_{k'}$: 

\begin{figure}[H]
\begin{center}
%% EXPORT 75%
\input{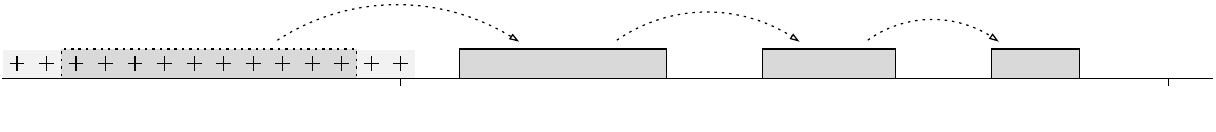_t}
\end{center}
\end{figure}

Using Lemma \ref{lemaJomega},
\begin{align*}
P_\omega^{+,N}(\xi_{k'}>0)
&=P_\omega^{+,N}(\sigma_{k'}=+)\\
&\geq \prod_{k=k'}^{M-1} 
 P_\omega^{+,N}(\sigma_{k}=+\dado\sigma_{k+1}=+)\\
&\geq \prod_{k=k'}^{\infty} 
\{1-e^{-h_{k+1}^2n_1(k)/16}\}\,.
\end{align*}
When $\alpha<\frac{1-\epsilon_*}{2}$, we have 
\begin{equation}\label{eq:seriealphaconv}
\sum_{k}e^{-h_{k+1}^2n_1(k)/16}<\infty\,,
\end{equation}
and so that last product converges and goes to $1$ when $k'\to\infty$.
\end{proof}

\begin{proof}[Proof of Proposition \ref{propespnula}]
By Proposition \ref{lemaperc}, we can consider a fixed
environment $\omega$ for which $K=K(\omega)<\infty$. Then
for each $k'>K$, we have that $\omega\in\{\infty\to
k'\}$.\\

Take $M>k'$. If $\xi_M=0$ then $E_\omega[\xi_{k'}\dado\xi_{M}]=0$.
If $\xi_{M}> 0$, then by the attractiveness of the model, 
$E_\omega[\xi_{k'}\dado\xi_{M}]\geq 0$.
For an upper bound, we look for the scale $k$ at which $\xi$ changes sign:
\begin{equation}\label{eq:deftempoSM}S_M\pordef \max\{k'\leq k\leq M:\xi_{k}\leq 0\},\end{equation}
with the convention that $S_M\pordef k'-1$ if $\xi_{k}>0$ for all $k'\leq k<
M$.
Again by attractiveness, on 
$\{S_M\geq k'\}$, the Markov property gives 
$E_\omega[\xi_{k'}\dado \xi_{S_M}]\leq 0$.
Therefore,  $E_\omega[\xi_{k'}\dado S_M\geq k',\xi_{M}]\leq 0$, and so
\begin{align}
 E_\omega[&\xi_{k'}\dado \xi_{M}]
\leq P_\omega\big(S_M=k'-1\dado \xi_{M}\big)\,.
\end{align}
It follows by Lemma
\ref{lemaJomega} and $1-x\leq e^{-x}$ that 
\begin{align*}
P_\omega\big(S_M=k'-1\dado \xi_{M}\big)
&= P_\omega\big(\xi_{k'}>0,\dots,\xi_{M-1}>0\dado \xi_{M}\big)\\
&=\prod_{k=k'}^{M-1} P_\omega\big(\sigma_{k}=+\dado\sigma_{k+1}=+\big)\\
& \leq \exp\Bigl\{ -c_0\sum_{k=k'}^{M-1}
e^{-2h_{k+1}^2n_2(k)}\Bigr\}.
\end{align*}
But when $\alpha>\frac{1-\epsilon_*}{2}$, 
\begin{equation}\label{eq:seriealphadiv}\sum_{k}
e^{-2h_{k+1}^2n_2(k)}=\infty\,.\qedhere
\end{equation}
\end{proof}

\subsection{Proof of Theorem \ref{teo2meio}}\label{sec:provateo2meio}

The proof of uniqueness when $\varphi$ is Lipshitz at the origin will  be based on the same
principle used when proving item \ref{it2} of Theorem \ref{teo1}.

\begin{prop}\label{propteo2meio}
Let $\bP=Q\otimes P_\omega$ be the distribution of any process specified by
$g$. Under the hypotheses of Theorem \ref{teo2meio}, for $Q$-almost all
environment $\omega$, and for all large enough $k'$, 
\[\lim_{M\to\infty}E_\omega[\xi_{k'}\dado\xi_{M}]=0\quad P_\omega
\text{-almost surely}.\]
\end{prop}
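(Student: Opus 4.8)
The plan is to follow the proof of Proposition~\ref{propespnula} line by line, replacing Lemma~\ref{lemaJomega}, whose sign-change estimate was tailored to $\varphi_{PMR}$, by a variant adapted to a $\varphi$ that is Lipschitz at the origin. First I would use Proposition~\ref{lemaperc} to fix an environment $\omega$ with $K(\omega)<\infty$ and take $k'>K$, so that $\omega\in\{\infty\to k'\}$ and the exact recursion \eqref{eq:basictrans} holds at every scale $j\geq k'$. It suffices to treat $\xi_M>0$, the case $\xi_M=0$ being trivial and $\xi_M<0$ following by symmetry. Exactly as in Proposition~\ref{propespnula}, attractiveness gives $E_\omega[\xi_{k'}\dado\xi_M]\geq0$, and introducing the last sign change $S_M\pordef\max\{k'\leq k\leq M:\xi_k\leq0\}$ together with the Markov property of $(\xi_j)$ yields
\[
E_\omega[\xi_{k'}\dado\xi_M]\leq P_\omega\big(\xi_{k'}>0,\dots,\xi_{M-1}>0\Dado\xi_M\big).
\]
The whole problem is thus reduced to showing that this ``no sign change'' probability tends to $0$ as $M\to\infty$, for $P_\omega$-almost every value of $\xi_M$.

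The new input is a lower bound on the one-step flip probability. Since $\varphi$ is Lipschitz at the origin there are $\delta>0$ and $\lambda<\infty$ with $|\varphi(z)|\leq\lambda|z|$ for $|z|\leq\delta$. Conditionally on $\xi_{k+1}$ with $0<\xi_{k+1}\leq\delta$, the variables $\{X_s:s\in\act(B^k(0))\}$ are i.i.d.\ with common mean $h_{k+1}\varphi(\xi_{k+1})$, whose absolute value is at most $\lambda h_{k+1}\xi_{k+1}$. Running the Stirling/central-limit computation of the lower bound in Lemma~\ref{lemaJomega} with this mean, I expect
\[
P_\omega\big(\xi_k\leq0\Dado\xi_{k+1}\big)\geq c_0\exp\big\{-c\lambda^2h_{k+1}^2\xi_{k+1}^2A(k)-c\lambda h_{k+1}\xi_{k+1}\sqrt{A(k)}\big\},
\]
where $A(k)\pordef|\act(B^k(0))|$. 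The decisive difference from the pure majority rule is that the exponent now carries the factor $\xi_{k+1}^2$: the signal sent down to scale $k$ is proportional to the magnetization $\xi_{k+1}$, and not merely to its sign. On a good block $A(k)\asymp\beta_k^{1-\epsilon_*}$ and $h_{k+1}=\beta_k^{-\alpha}$, so $h_{k+1}^2A(k)\asymp\beta_k^{1-\epsilon_*-2\alpha}$ may diverge for small $\alpha$; but the typical size of $\xi_{k+1}$ is of order $A(k+1)^{-1/2}$, and since $A(k)/A(k+1)\to0$ superexponentially the exponent is then negligible. This is why a genuine sign change occurs with probability bounded below at essentially every scale, for every $\alpha>0$.

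The main obstacle is exactly that this flip probability depends on the value of $\xi_{k+1}$ and not only on its sign, so the analysis no longer collapses to the homogeneous sign chain of the $\varphi_{PMR}$ case, and a bound uniform over $\xi_{k+1}\in(0,\delta]$ would be useless for small $\alpha$. I would resolve this by confining the intermediate magnetizations: fix $\epsilon_k\searrow0$ in the window $\beta_k^{-(1-\epsilon_*^2)/2}\ll\epsilon_{k+1}\ll\beta_k^{\alpha-(1-\epsilon_*)/2}$, which a short computation shows is nonempty for every $\alpha>0$, and which forces both $h_{k+1}^2\epsilon_{k+1}^2A(k)\to0$ (so the flip probability is $\geq c_0/2$ on $\{0<\xi_{k+1}\leq\epsilon_{k+1}\}$) and $\sum_k e^{-c\epsilon_k^2A(k)}<\infty$. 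The bad event is controlled even under the conditioning on $\xi_M$: since $\xi_k$ is, given $\xi_{k+1}$, independent of the higher scales, the Markov property gives $P_\omega(|\xi_k|>\epsilon_k\dado\xi_M)=E_\omega[P_\omega(|\xi_k|>\epsilon_k\dado\xi_{k+1})\dado\xi_M]$; splitting according to whether $\xi_{k+1}\leq\epsilon_{k+1}$ and using the one-step estimate \eqref{eq:estimprobdelta} on the small part leads to the bootstrap $P_\omega(|\xi_k|>\epsilon_k\dado\xi_M)\leq e^{-c\epsilon_k^2A(k)}+P_\omega(|\xi_{k+1}|>\epsilon_{k+1}\dado\xi_M)$, hence $P_\omega(|\xi_k|>\epsilon_k\dado\xi_M)\leq\sum_{j\geq k}e^{-c\epsilon_j^2A(j)}$ for the (a.s.\ eventual) realizations with $\xi_M\leq\epsilon_M$.

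Peeling the scales from the bottom and inserting the flip lower bound at each scale where $\xi$ is small then gives, for those realizations,
\[
P_\omega\big(\xi_{k'}>0,\dots,\xi_{M-1}>0\Dado\xi_M\big)\leq\big(1-\tfrac{c_0}{2}\big)^{M-k'}\prod_{j>k'}\big(1+\tfrac{2}{c_0}e^{-c\epsilon_j^2A(j)}\big).
\]
The delicate point here, and the place where the naive estimate fails, is that a scale carrying an anomalously large $\xi$ must still be compensated by sign changes at all the remaining scales; this is what produces the global factor $(1-\tfrac{c_0}{2})^{M-k'}$ rather than an undamped positive constant. Because $\sum_j e^{-c\epsilon_j^2A(j)}<\infty$ the product converges, so the right-hand side tends to $0$ as $M\to\infty$, and since $\xi_M\to0$ (hence $\xi_M\leq\epsilon_M$) $P_\omega$-almost surely, this yields $\lim_{M\to\infty}E_\omega[\xi_{k'}\dado\xi_M]=0$, exactly as needed to run the uniqueness argument used for item~\eqref{it2} of Theorem~\ref{teo1}.
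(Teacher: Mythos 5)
Your reduction is exactly the paper's: fix an environment with $K(\omega)<\infty$, introduce the last sign change $S_M$, and bound $E_\omega[\xi_{k'}\dado\xi_M]$ by the no-sign-change probability. Where you diverge is in the key flip estimate. The paper's Lemma \ref{lemauniqteo2} works with \emph{two-step} transitions $\xi_k\to\xi_{k-2}$: for an \emph{arbitrary} value of $\xi_k$, it conditions on the single intermediate variable $\xi_{k-1}$, which falls in $[-h_{k-1}^m,h_{k-1}^m]$ with conditional probability $\geq\tfrac12$ by Chebyshev around its conditional mean $h_k\varphi(\xi_k)$ (with $m$ chosen so that $\alpha>\tfrac{1-\epsilon_*}{2(1+m)}$), and on that event the CLT produces a sign change with probability $\geq c_1$. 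This gives $P_\omega(\xi_{k-2}\leq0\dado\xi_k)\geq c_1/2$ \emph{uniformly in} $\xi_k$, so the final bound is simply $(1-c_1/2)^{(M-k')/2}$ and no control of the conditional law of the intermediate magnetizations given $\xi_M$ is ever needed. Your one-step scheme instead requires confining every scale between $k'$ and $M$ to a shrinking window $\epsilon_k$, hence the bootstrap on $P_\omega(|\xi_k|>\epsilon_k\dado\xi_M)$ and the interleaved peeling. That peeling can be made rigorous (a two-component recursion tracking simultaneously the no-sign-change event and the confinement at the current scale closes, because the failure probabilities $e^{-c\epsilon_j^2A(j)}$ decay super-geometrically in $j$), and your identification of why the naive union bound leaves an undamped constant is correct; but the bookkeeping is substantially heavier than the paper's.

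There is one genuine gap: the initialization of the confinement. The parenthetical ``since $\xi_M\to0$ (hence $\xi_M\leq\epsilon_M$)'' is a non sequitur, because $\epsilon_M\to0$ as well, and your bootstrap cannot supply $P_\omega(|\xi_M|>\epsilon_M)\to0$ on its own, since its starting term at scale $M$ is precisely $1_{\{|\xi_M|>\epsilon_M\}}$. The problem bites exactly in the interesting regime of small $\alpha$: your window forces $\epsilon_M\ll\beta_{M-1}^{\alpha-(1-\epsilon_*)/2}$, which for $\alpha$ below roughly $\tfrac{1-\epsilon_*}{2(2+\epsilon_*)}$ is necessarily $\ll h_{M+1}$. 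An unconfined $\xi_{M+1}$ then gives $\xi_M$ a conditional mean of order $h_{M+1}\gg\epsilon_M$, so $P_\omega(|\xi_M|\leq\epsilon_M\dado\xi_{M+1})$ is exponentially small rather than close to one, and Hoeffding plus Borel--Cantelli does not apply directly. The statement you need is true, but it requires a finite iteration you do not supply: first establish the crude almost-sure envelope $|\xi_M|\leq h_{M+1}+A(M)^{-1/2}\log\beta_M$ for all large $M$, then feed it back through $|E_\omega[\xi_M\dado\xi_{M+1}]|\leq\lambda h_{M+1}|\xi_{M+1}|$ to obtain successively smaller envelopes of order $h_{M+1}\cdots h_{M+r}$, which enter your window after $r=r(\alpha)$ steps. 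With that addition (and with the product bound derived rather than asserted), your argument goes through.
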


We consider an environment $\omega$ with $K=K(\omega)<\infty$.
We take $k'>K$, and $M>k'$.
As before, the proof is based on showing that whatever the sign of $\xi_{M}$, the
sequence $\xi_{k}$ has a positive probability of having changed sign before reaching
$k'$.\\

We will look at the variables $\xi_{k}$ at even times:
$\xi_{M},\xi_{M-2},\dots$, and show that the probability of $\xi$
changing sign between two scales $k$ and $k-2$ is bounded away from zero.

\begin{lem}\label{lemauniqteo2}
Let $\omega\in\{\infty\to k'\}$ with $k'$ large enough. 
If  $\varphi$ satisfies \eqref{eq:conditlimsup}, 
then for all $\alpha>0$, and all $k\geq k'+2$,
\[P_\omega\bigl(\xi_{k-2}\leq 0\dado\xi_{k}\bigr)\geq 
\tfrac{c_1}{2}>0,\]
where $c_1$ is a universal constant.
\end{lem}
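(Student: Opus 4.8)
The plan is to exploit the two-step transition structure of the chain $(\xi_j)$ on $\{\infty\to k'\}$. By \eqref{eq:basictrans} and the i.i.d.\ description following \eqref{eq:psiigual}, conditionally on $\xi_k$ the variable $\xi_{k-1}$ is the average of $A(k-1):=\abs{\act(B^{k-1}(0))}$ i.i.d.\ signs of common mean $h_k\varphi(\xi_k)$, and conditionally on $\xi_{k-1}$ the variable $\xi_{k-2}$ is the average of $A(k-2):=\abs{\act(B^{k-2}(0))}$ i.i.d.\ signs of common mean $h_{k-1}\varphi(\xi_{k-1})$; on $\{\infty\to k'\}$ both blocks are good, so $n_1(j)<A(j)<n_2(j)$ for $j=k-1,k-2$.

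First I would treat the passage $\xi_k\mapsto\xi_{k-1}$ as a reset. Whatever the value of $\xi_k$, the conditional mean is bounded by $\abs{h_k\varphi(\xi_k)}\le h_k\searrow0$, since $\abs{\varphi}\le1$. A Stirling / local central limit estimate of the type used for the lower bound in Lemma \ref{lemaJomega} then shows that there is a universal $c_1>0$ and a small threshold $\delta_{k-1}$, dominating both $h_k$ and $A(k-1)^{-1/2}$ yet tending to $0$, such that
\[
P_\omega\bigl(0\le\xi_{k-1}\le\delta_{k-1}\dado\xi_k\bigr)\ge c_1
\]
uniformly in $\xi_k$: the mean sits inside $[0,\delta_{k-1}]$, and this window has width at least of the order of the standard deviation, so it carries a fixed fraction of the mass.

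Second, on $\{0\le\xi_{k-1}\le\delta_{k-1}\}$ the argument $\xi_{k-1}$ lies in the neighbourhood of the origin where \eqref{eq:conditlimsup} applies, giving $0\le\varphi(\xi_{k-1})\le\lambda\,\xi_{k-1}\le\lambda\delta_{k-1}$ for a finite $\lambda$. Hence the mean feeding $\xi_{k-2}$ is at most $\lambda h_{k-1}\delta_{k-1}$, smaller by the Lipschitz factor $\delta_{k-1}$ than the full size $h_{k-1}$ that the pure majority rule would produce. Inserting this into the same Stirling lower bound that yielded the estimate $\ge c_0\,e^{-2h_{k+1}^2 n_2(k)}$ in Lemma \ref{lemaJomega} gives
\[
P_\omega\bigl(\xi_{k-2}\le0\dado\xi_{k-1}\bigr)\ge\tfrac{c_0}{2}
\]
uniformly for $\xi_{k-1}\in[0,\delta_{k-1}]$. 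Combining the two steps through the Markov property and renaming constants then yields the claim.

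The step I expect to be the real obstacle is making the two requirements on $\delta_{k-1}$ compatible: it must be large enough to capture a fixed fraction of the mass of $\xi_{k-1}$ (so $\delta_{k-1}\gtrsim\max\{h_k,\,A(k-1)^{-1/2}\}$), and small enough that the second-step large-deviation exponent $\bigl(\lambda h_{k-1}\delta_{k-1}\bigr)^2 n_2(k-2)$, together with $\lambda h_{k-1}\delta_{k-1}\sqrt{n_2(k-2)}$, stays bounded. This is exactly where the multiscale relation $\beta_k\asymp\beta_{k-1}^{1+\epsilon_*}$, the decay $h_k=\beta_{k-1}^{-\alpha}$ and the good-block sizes $n_1,n_2$ must be balanced, and it is the reason the statement is phrased for the two-step transition $k\mapsto k-2$ rather than for a single scale: the gain that one hopes makes it work for every $\alpha>0$ is precisely the extra factor $\delta_{k-1}$ supplied by the Lipschitz bound, which is absent for $\varphi_{PMR}$.
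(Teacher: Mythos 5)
Your overall strategy is the paper's: first localize $\xi_{k-1}$ in a small window touching the origin, then use \eqref{eq:conditlimsup} to shrink the drift $h_{k-1}\varphi(\xi_{k-1})$ feeding $\xi_{k-2}$, and finish with a Gaussian-type lower bound. The gap is exactly the step you flag as ``the real obstacle'', and it is not a technicality: with your choice of window the scheme genuinely fails for small $\alpha$. Because you insist that $\delta_{k-1}\gtrsim h_k$ (so that the window captures the mean $h_k\varphi(\xi_k)$ of $\xi_{k-1}$ uniformly in $\xi_k$), the second-step drift is at best of order $\lambda h_{k-1}h_k\approx \beta_{k-2}^{-\alpha(2+\epsilon_*)}$, while $\sqrt{n_2(k-2)}\approx\beta_{k-2}^{(1-\epsilon_*)/2}$ up to lower-order factors. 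Hence $\lambda h_{k-1}\delta_{k-1}\sqrt{n_2(k-2)}$ diverges as soon as $\alpha<\frac{1-\epsilon_*}{2(2+\epsilon_*)}$, and your Stirling lower bound for $P_\omega(\xi_{k-2}\le 0\dado \xi_{k-1})$ degenerates. A two-step argument whose only gain over $\varphi_{PMR}$ is the single factor $\delta_{k-1}$ with $\delta_{k-1}\gtrsim h_k$ cannot cover all $\alpha>0$, so the compatibility you ``hope'' for must actually be engineered, not checked.

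The paper engineers it by making the window an $\alpha$-dependent power of $h_{k-1}$: it conditions on $\{|\xi_{k-1}|\le h_{k-1}^m\}$ with $m$ chosen so that $\alpha>\frac{1-\epsilon_*}{2(1+m)}$, which makes the second-step drift $\lambda h_{k-1}^{1+m}$ beat $\sqrt{n_2(k-2)}$ by construction; the localization probability is then shown to be $\ge\tfrac12$ by Chebyshev rather than by a local CLT, and both the case $\xi_k\le 0$ and the half-window $\{-h_{k-1}^m\le\xi_{k-1}\le 0\}$ are dispatched by attractiveness alone, a reduction you do not use. These are the two devices missing from your write-up. That said, your instinct about where the difficulty lies is sound: the paper's own verification of the localization step also requires the window to contain the mean (it imposes $h_k\le h_{k-1}^m/2$, which forces $m$ to be at most about $1+\epsilon_*$), so the tension between ``wide enough to be hit'' and ``narrow enough for the second step'' is precisely the delicate point of this lemma and deserves an explicit check of the admissible range of $m$ against $\alpha$; it does not go away by renaming constants.
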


\begin{proof}
If $\xi_{k}\leq 0$, then
attractiveness gives
$P_\omega\bigl(\xi_{k-2}\leq 0\dado\xi_{k}\bigr)\geq \tfrac12$.
If $\xi_{k}=x>0$, let us denote $P_\omega^x(\cdot)\pordef P_\omega(\cdot\dado
\xi_{k}=x)$.
We have
\begin{align*} 
P_\omega^x\bigl( \xi_{k-2}\leq 0\bigr)
&=E_\omega^x\bigl[P_\omega^x\bigl(\xi_{k-2}\leq 0\dado \xi_{k-1}\bigr)\bigr]\\ 
&\geq E_\omega^x\bigl[P_\omega^x\bigl(\xi_{k-2}\leq 0\dado
\xi_{k-1}\bigr)1_{\{|\xi_{k-1}|\leq h_{k-1}^m\}}\bigr]\,,\label{eqxik} 
\end{align*} 
where $m$ is chosen such that 
\begin{equation}\label{doncitalpham}
\alpha>\frac{1-\epsilon_*}{2(1+m)}\,.
\end{equation}
Again, by attractiveness, $-h_{k-1}^m\leq \xi_{k-1}\leq 0$ implies
\[
P_\omega^x\bigl(\xi_{k-2}\leq 0\dado \xi_{k-1}\bigr)\geq \tfrac12\,.
\] 
When $0\leq
\xi_{k-1}\leq h_{k-1}^m$, we will bound this probability using the Central
Limit Theorem.  Let 
\[
\overline{X}_s\pordef
\frac{X_s-E_\omega^x[X_s|\xi_{k-1}]}{\sqrt{\mathrm{Var}_\omega^x(X_s\dado \xi_{k-1})}}\,,
\]
which are centered with variance $1$.
Then
\begin{align*} 
&P_\omega^x\bigl( \xi_{k-2}\leq 0\dado \xi_{k-1}\bigr) \\ 
&= 
P_\omega^x\Bigl(\tfrac{1}{\sqrt{\abs{\act(B^{k-2}(0))}}}\sum_{s\in
\act(B^{k-2}(0))}\overline{X}_s\leq-\tfrac{E_\omega^x[X_s\dado
\xi_{k-1}]\sqrt{\abs{\act(B^{k-2}(0))}}}{\mathrm{Var}_\omega^x(X_s\dado \xi_{k-1})}
\DDado\xi_{k-1}\Bigr)\,.
\end{align*} 

By \eqref{eq:conditlimsup}, there exists $0<\lambda<\infty$ and
$\delta>0$ such that $\varphi(y)\leq \lambda y$ 
for all $0\leq y\leq\delta$. Therefore,
if $k$ is large enough so that $h_{k-1}^m\leq \delta$, 
\begin{align*} 
0\leq E_\omega^x[X_s \dado\xi_{k-1}]
=h_{k-1}\varphi(\xi_{k-1})
\leq \lambda 
h_{k-1}\xi_{k-1}
\leq \lambda h_{k-1}^{1+m}\,.
\end{align*} 
Then, since $B^{k-2}(0)$ is good,
\[\frac{E_\omega^x[X_s\dado
\xi_{k-1}]\sqrt{\abs{\act(B^{k-2}(0))}}}{\mathrm{Var}_\omega^x(X_s\dado \xi_{k-1})}
\leq  
\frac{\lambda h_{k-1}^{1+m}\sqrt{n_2(k-2)}}{\sqrt{1-h_{k-1}^2}}\,.
\] 
But, the dominating term in this last expression is
$\beta_{k-2}^{-(\alpha(1+m)-(1-\epsilon_*)/2)}$, which tends to zero since
\eqref{doncitalpham} holds.
Therefore, taking $k$ large enough,
\begin{align*} 
P_\omega^x\bigl( \xi_{k-2}\leq 0\dado \xi_{k-1}\bigr) &\geq 
P_\omega^x\Bigl(\tfrac{1}{\sqrt{\abs{\act(B^{k-2}(0))}}}\sum_{s\in
\act(B^{k-2}(0))}\overline{X}_s\leq-1\DDado\xi_{k-1}\Bigr) \\
&\geq 
0,5\cdot\tfrac{1}{\sqrt{2\pi}} \int_{-\infty}^{-1}e^{-x^2/2}\,dx\equiv c_1\,.
\end{align*}
It remains to study $P_\omega^x(|\xi_{k-1}|\leq h_{k-1}^m)$. 
We have $E^x_\omega[\xi_{k-1}]=h_{k}\varphi(x)$, so if 
$k$ is such that $h_{k}\leq h_{k-1}^m/2$ then, using the Chebychev Inequality, 
\begin{align*}
P_\omega^x(|\xi_{k-1}|\leq h_{k-1}^m) &\geq P_\omega^x(|\xi_{k-1}-h_{k}\varphi(x)|\leq h_{k-1}^m/2)\\
&\geq 1-\tfrac{2}{\abs{\act(B^{k-1}(0))}h_{k-1}^m}\\
&\geq 1-\tfrac{2}{n_1(k-1)h_{k-1}^m}\,,
\end{align*}
which is $\geq \tfrac12$ when $k$ is large enough.
\end{proof}

\begin{proof}[Proof of Proposition \ref{propteo2meio}]
The proof is the same as the one of Proposition \ref{propespnula}.
If $\xi_{M}>0$, define $S_M$ as in \eqref{eq:deftempoSM}.
Assuming for simplicity that $M-k'$ is even, Lemma \ref{lemauniqteo2} gives
\begin{align*}
 P_\omega\bigl(S_M=k'-1\Dado \xi_{M}\bigr)
&\leq P_\omega\bigl(\xi_{k'}> 0,\xi_{k'+2}>0,\cdots,\xi_{M-2}>0\dado\xi_{M}\bigr)\\
&\leq \bigl(1-\tfrac{c_1}{2}\bigr)^{(M-k')/2}.\qedhere
\end{align*}
\end{proof}
 
\section{Proof of Theorem \ref{teo2}}\label{sec:provadiff}

The proof of uniqueness when $\varphi$ is Lipschitz, for arbitrary
$h_{k}$s, will be based on the same principle used when proving item \ref{it2}
of Theorem \ref{teo1}, showing that for all large enough $k$, the distribution
of $\xi_{k}$ under $P_\omega(\cdot\dado \xi_{M})$ converges to a symmetric
distribution when $M\to\infty$:

\begin{prop}\label{propuniqteo2} Assume $\varphi$ is Lipshitz in a neighborhood of the origin.
Let $h_{k}\searrow 0$ be an arbitrary sequence.
Let $\bP=Q\otimes P_\omega$ be the distribution of any process specified
by $g$.  Then for $Q$-almost
all environment $\omega$, for all large enough $k'$, \[ \lim_{M\to\infty}E_\omega\bigl[\xi_{k'}\Dado
\xi_{M}\bigr]= 0\,\quad P_\omega\text{-almost surely.}\]
\end{prop}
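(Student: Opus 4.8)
The plan is to follow the same principle as in the proof of item \eqref{it2} of Theorem \ref{teo1}: working quenched, I show that the map sending $\xi_{M}$ to $E_\omega[\xi_{k'}\dado\xi_{M}]$ becomes arbitrarily flat near the origin as $M\to\infty$. Fix an environment $\omega$ with $K=K(\omega)<\infty$ and take $k'>K$, so that $\omega\in\{\infty\to k\}$ for all $k\ge k'$ and the transition identity \eqref{eq:basictrans} holds. Writing $F_j(x)\pordef E_\omega[\xi_{k'}\dado\xi_{j}=x]$ for $j\ge k'$, the Markov property gives $F_{k'}(x)=x$ together with the backward recursion
\[F_{j+1}(x)=E_\omega\bigl[F_{j}(\xi_{j})\dado\xi_{j+1}=x\bigr]\,.\]
The spin-flip symmetry \eqref{eq:simetriaPs}, which sends $\varphi(y)\mapsto-\varphi(y)$ and hence maps the conditional law of $\xi_j$ given $\xi_{j+1}$ to its reflection, shows by induction that each $F_j$ is odd; in particular $F_j(0)=0$.

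The heart of the argument is a contraction estimate for the Lipschitz constants $L_j$ of the $F_j$. Given $x\ge x'$, I couple the conditional laws of $\xi_j$ given $\xi_{j+1}=x$ and given $\xi_{j+1}=x'$ by the monotone coupling of Section \ref{Sec_Cons_X} (same uniforms, yielding $\zeta\ge\zeta'$); since the relevant $\{X_s\}$ are i.i.d.\ with means $h_{j+1}\varphi(x)$ and $h_{j+1}\varphi(x')$, one has $E|\zeta-\zeta'|=E[\zeta-\zeta']=h_{j+1}(\varphi(x)-\varphi(x'))$. Where $\varphi$ is Lipschitz with constant $\lambda$ this is $\le\lambda h_{j+1}(x-x')$, so
\[F_{j+1}(x)-F_{j+1}(x')=E\bigl[F_j(\zeta)-F_j(\zeta')\bigr]\le L_j\,E|\zeta-\zeta'|\le \lambda h_{j+1}L_j\,(x-x')\,.\]
Hence $L_{j+1}\le\lambda h_{j+1}L_j$, and since $h_j\searrow0$ I may enlarge $k'$ so that $\lambda h_j\le\tfrac12$ for all $j>k'$; then $L_M\le\prod_{j=k'+1}^{M}\lambda h_j\le 2^{-(M-k')}\to0$. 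With $F_M(0)=0$ and $|\xi_M|\le1$ this gives $|E_\omega[\xi_{k'}\dado\xi_{M}]|=|F_M(\xi_M)|\le L_M|\xi_M|\le L_M\to0$, which is the claim. This already settles the case of a \emph{globally} Lipschitz $\varphi$, and in particular the linear rule $\varphi_{\mathrm{lin}}$, which is exactly why that example plays a central role.

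The main obstacle is that $\varphi$ is only assumed Lipschitz on $[-\delta,\delta]$, so the bound $E|\zeta-\zeta'|\le\lambda h_{j+1}(x-x')$ holds only while the chain remains in $[-\delta,\delta]$. To handle this I would first note that the limit in question exists for free: the $\sigma$-algebras $\sigma(\xi_M,\xi_{M+1},\dots)$ decrease in $M$ and, by the Markov property, $E_\omega[\xi_{k'}\dado\xi_M]=E_\omega[\xi_{k'}\dado\sigma(\xi_M,\xi_{M+1},\dots)]$, so the reverse martingale convergence theorem gives $P_\omega$-almost sure convergence of $E_\omega[\xi_{k'}\dado\xi_M]$ as $M\to\infty$. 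It then suffices to identify the limit as $0$, for which convergence in probability (or in $L^1$) is enough, and such a mode of convergence tolerates rare excursions outside $[-\delta,\delta]$.

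Finally I would remove the restriction by comparison: replace $\varphi$ with a globally Lipschitz $\tilde\varphi$ coinciding with $\varphi$ on $[-\delta,\delta]$ (for instance its constant extension, or $\varphi_{\mathrm{lin}}$), so that the two models have identical transition kernels whenever the conditioning value lies in $[-\delta,\delta]$. Since $\xi_j\to0$ $P_\omega$-almost surely by \eqref{eq:BorelCantxismall}, and the concentration bound \eqref{eq:estimprobdelta} forces the entire late trajectory to remain in $[-\delta,\delta]$ with probability tending to $1$, the original and modified conditional expectations differ by a quantity vanishing in probability as $k'$ grows. The modified model is dispatched by the global-Lipschitz contraction above, and the comparison then pins the common almost sure limit to $0$. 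The delicate point to watch is precisely this transfer from the globally Lipschitz surrogate back to $\varphi$: one must verify that the excursion events, though they break the clean Lipschitz recursion, contribute negligibly once measured in probability rather than pointwise.
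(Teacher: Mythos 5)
Your contraction estimate for the globally Lipschitz case is correct, and it is a genuinely different route from the paper's. The paper proves the corresponding statement (Lemma \ref{lemasandro}) by a monotone coupling of the four processes obtained from the $+$ and $-$ boundary conditions for $\tvarphi$ and for the purely linear rule $\varphi_{ID}$, whereas you contract the Lipschitz constant of $x\mapsto E_\omega[\xi_{k'}\dado\xi_j=x]$ directly through the backward Markov recursion, obtaining the quantitative, deterministic bound $|E_\omega[\xi_{k'}\dado\xi_M]|\leq \prod_{j=k'+1}^{M}\lambda h_j$. This is arguably cleaner, and it has the virtue of applying to any bounded odd Lipschitz target in place of $\xi_{k'}$ --- a fact you will need below. (Two small slips: $\varphi_{\mathrm{lin}}$ does not coincide with $\varphi$ on $[-\delta,\delta]$ in general, so the surrogate should be the constant or affine extension of $\varphi|_{[-\delta,\delta]}$, as in \eqref{eq:defvarphitil}; and the oddness of $F_j$ comes from the oddness of $\varphi$ in the transition kernel, not from \eqref{eq:simetriaPs}.)

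The gap is in the transfer back to the locally Lipschitz $\varphi$. For a \emph{fixed} $k'$, the probability that the trajectory $\xi_M,\xi_{M-1},\dots,\xi_{k'}$ ever leaves $[-\delta,\delta]$ is bounded by $\sum_{k\geq k'}e^{-cn_1(k)}$: this is small in $k'$ but does \emph{not} tend to $0$ as $M\to\infty$ (if anything it increases with $M$). So your comparison only yields $\limsup_M|E_\omega[\xi_{k'}\dado\xi_M]|\leq \epsilon(k')$ with $\epsilon(k')\to 0$ --- and indeed you write ``vanishing in probability as $k'$ grows'' --- which is strictly weaker than the claimed $\lim_M E_\omega[\xi_{k'}\dado\xi_M]=0$ for each fixed large $k'$. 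The paper closes this by inserting an intermediate scale $L$ with $k'<L<M$: the chains $\xi$ and $\txi$ are required to agree only down to scale $L$, so the decoupling probability $\sum_{k\geq L}e^{-cn_1(k)}$ vanishes as $L\to\infty$ \emph{uniformly in} $M$; on the coupled event one writes $E_\omega[\xi_{k'}\dado\xi_L]=f_\omega(\txi_L)$ with $f_\omega(x)\pordef E_\omega[\xi_{k'}\dado\xi_L=x]$ odd, and it remains to show $E_\omega[f_\omega(\txi_L)\dado\txi_M]\to 0$ as $M\to\infty$ for fixed $L$ --- a statement about the full conditional law of $\txi_L$, not merely its mean. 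Your contraction, applied to the bounded odd function $f_\omega$ (which is Lipschitz on the finite support of $\txi_L$ with constant at most $|\act(B^L(0))|$), supplies exactly this ingredient, so the proof is repairable within your framework; but as written the intermediate scale, the oddness of $f_\omega$, and the order of limits ($M\to\infty$ first, then $L\to\infty$) are missing, and without them the stated conclusion does not follow.
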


To understand why Lipschitzness near the origin implies uniqueness regardless of
the details of the sequence $h_{k}$, we first consider a particular case.\\

Assume $\varphi$ is globally linear with slope $1$: 
\[
\varphi_{ID}(z)\pordef z\,\quad \forall z\in [-1,1]\,.
\]

Let $\omega\in \{\infty\to k'\}$, and take $k> k'$.
Then
\begin{align*}
E_\omega[\xi_{k}]
=E_\omega\bigl[E_\omega[\xi_{k}\dado \xi_{k+1}]\bigr]
=E_\omega\bigl[h_{k+1}\varphi_{ID}(\xi_{k+1})\bigr]
=h_{k+1}E_\omega[\xi_{k+1}]\,.
\end{align*}
Repeating this procedure we get, for all $L\geq 1$,
\begin{equation}\label{eqiterxi}
E_\omega[\xi_{k}]=\Bigl\{\prod_{j=k+1}^{k+L}h_{j}\Bigr\}E_\omega[\xi_{k+L}]\,.
\end{equation}
Taking $L\to \infty$ gives $E_\omega[\xi_{k}]=0$.\\

The proof of Proposition \ref{propuniqteo2} consists in using this 
phenomenon, which obviously doesn't depend on the precise values of
the sequence $h_{k}$. 
So first, we will consider a case where the Lipschitzness of $\varphi$ is global:

\begin{lem}\label{lemasandro} Assume that $\tvarphi$ is $1$-Lipschitz on 
$[-1,1]$:
\[ |\tvarphi(z_2)-\tvarphi(z_1)|\leq |z_2-z_1|\quad \forall z_1,z_2\in [-1,1]\,.
\]
Let $\bP=Q\otimes P_\omega$ be the distribution of any process specified by the
$g$-function associated to $\tvarphi$ and to some sequence $\thh_{k}\searrow 0$.  
Then for $Q$-almost
all environment $\omega$, for all large enough $k'$, \[
\lim_{M\to\infty}E_\omega\bigl[\txi_{k'}\Dado \txi_{M}\bigr]= 0\,\quad
P_\omega\text{-almost surely.}\] 
\end{lem}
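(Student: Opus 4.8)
The plan is to fix an environment $\omega$ with $K(\omega)<\infty$ (possible for $Q$-a.e.\ $\omega$ by Proposition \ref{lemaperc}) and a scale $k'>K(\omega)$, so that $\omega\in\{\infty\to k'\}$ and the chain $(\txi_j)_{j\ge k'}$ is governed by the explicit transition rule following \eqref{eq:psiigual}: conditionally on $\txi_{j+1}=y$, the variable $\txi_j$ is the empirical average of the $|\act(B^j(0))|$ i.i.d.\ $\pm1$ variables $\{X_s:s\in\act(B^j(0))\}$, each of mean $\thh_{j+1}\tvarphi(y)$. I would introduce the response functions
\[
f_j(x)\pordef E_\omega\bigl[\txi_{k'}\Dado\txi_j=x\bigr]\,,\qquad j\ge k'\,,
\]
for which $f_{k'}=\mathrm{id}$, $f_j(0)=0$ (by the symmetry $X\mapsto-X$, using that $\tvarphi$ is odd), and, by the Markov property together with the tower rule, $f_{j+1}(x)=E_\omega[f_j(\txi_j)\dado\txi_{j+1}=x]$. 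Since $|\txi_M|\le 1$, proving the lemma reduces to showing that $f_M\to0$ uniformly on $[-1,1]$ as $M\to\infty$.

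The central step would be to propagate not the signed quantity $E_\omega[\txi_{k'}\dado\txi_M]$ itself, but the Lipschitz constant $L_j$ of $f_j$. Given $x_1\le x_2$, the conditional laws of $\txi_j$ under $\txi_{j+1}=x_1$ and $\txi_{j+1}=x_2$ are averages of i.i.d.\ $\pm1$ variables with ordered means $\thh_{j+1}\tvarphi(x_1)\le\thh_{j+1}\tvarphi(x_2)$, because $\tvarphi$ is non-decreasing. A monotone coupling through common uniform variables then yields versions $A_1\le A_2$ with
\[
E[A_2-A_1]=\thh_{j+1}\bigl(\tvarphi(x_2)-\tvarphi(x_1)\bigr)\le \thh_{j+1}(x_2-x_1)\,,
\]
the last inequality being exactly where the global $1$-Lipschitz hypothesis on $\tvarphi$ enters. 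Using that $f_j$ is $L_j$-Lipschitz, this gives
\[
|f_{j+1}(x_2)-f_{j+1}(x_1)|=\bigl|E[f_j(A_2)-f_j(A_1)]\bigr|\le L_j\,E[A_2-A_1]\le L_j\,\thh_{j+1}\,(x_2-x_1)\,,
\]
hence the contraction $L_{j+1}\le\thh_{j+1}L_j$. Since $L_{k'}=1$, iterating produces $L_M\le\prod_{j=k'+1}^{M}\thh_j$.

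The conclusion is then immediate: as $f_M(0)=0$,
\[
\bigl|E_\omega[\txi_{k'}\dado\txi_M]\bigr|=|f_M(\txi_M)-f_M(0)|\le L_M\,|\txi_M|\le\prod_{j=k'+1}^{M}\thh_j\,,
\]
and since $\thh_j\searrow0$ the factors are eventually $<1$, so the product tends to $0$ as $M\to\infty$, uniformly in $\txi_M$ and hence $P_\omega$-almost surely. This is precisely the quantitative form of the linear-contraction heuristic $E_\omega[\xi_k]=\{\prod h_j\}E_\omega[\xi_{k+L}]$ of \eqref{eqiterxi}, now made to work in the conditional, nonlinear setting. The step I expect to be the main obstacle is conceptual rather than computational: one cannot simply iterate the signed conditional expectation because $\tvarphi$ is nonlinear (so $\tvarphi$ does not commute with $E_\omega[\,\cdot\dado\txi_M]$), and a direct stochastic comparison with the linear model $\varphi_{ID}$ fails, since $\tvarphi(z)\le z$ holds only for $z\ge0$ and reverses for $z<0$, while the symmetric chain visits both signs. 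Tracking the Lipschitz constant of $f_j$ sidesteps this, because each backward scale contributes a factor $\thh_{j+1}$ regardless of sign, and it is this observation, not any delicate estimate, that carries the proof.
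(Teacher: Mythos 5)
Your proof is correct, but it follows a genuinely different route from the paper's. The paper first sandwiches $E_\omega[\txi_{k'}\dado\txi_M]$ between $E_\omega^{-,N}[\txi_{k'}]$ and $E_\omega^{+,N}[\txi_{k'}]$ by attractiveness, and then bounds the width of that sandwich by coupling \emph{four} full processes on $\bZ$: the $\pm$-boundary pair for $\tvarphi$ and the $\pm$-boundary pair for the purely linear rule $\ovvarphi=\varphi_{ID}$, arranged (via the inequality $0\leq\tvarphi(z_2)-\tvarphi(z_1)\leq z_2-z_1$) so that every site where $\tX^+$ and $\tX^-$ disagree is a site where $\ovX^+$ and $\ovX^-$ disagree; for $\varphi_{ID}$ the discrepancy probability is computed exactly and equals $\prod_k\thh_k$. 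You instead stay entirely inside the reduced Markov chain $(\txi_j)$ and run a backward induction on the Lipschitz constant of the response function $f_j(x)=E_\omega[\txi_{k'}\dado\txi_j=x]$, obtaining the contraction $L_{j+1}\leq\thh_{j+1}L_j$ from a one-step monotone coupling of the binomial transition kernels; this yields the uniform quantitative bound $\sup_x|E_\omega[\txi_{k'}\dado\txi_M=x]|\leq\prod_{j=k'+1}^{M}\thh_j$ directly, with no need for the site-by-site coupling of Section \ref{Sec_Cons_X} nor for the auxiliary linear model. Your argument is the more elementary and self-contained of the two, and it isolates cleanly where each hypothesis enters (monotonicity of $\tvarphi$ for the coupling, oddness for $f_j(0)=0$, the global $1$-Lipschitz bound for the contraction factor); the paper's version, in exchange, reuses machinery already built for the other theorems and makes the comparison with the linear toy model \eqref{eqiterxi} explicit. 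Both proofs ultimately rest on the same inequality $\tvarphi(z_2)-\tvarphi(z_1)\leq z_2-z_1$ and both land on the same product $\prod_j\thh_j\to0$. Two small points worth making explicit if you write this up: the Markov property of $(\txi_M,\dots,\txi_{k'})$ on $\{\infty\to k'\}$ (which justifies $f_{j+1}(x)=E_\omega[f_j(\txi_j)\dado\txi_{j+1}=x]$ and the fact that $f_M$ is the same for every process specified by $g$) should be stated as a consequence of \eqref{eq:psiigual}, exactly as the paper does before \eqref{eq:basictrans}; and the monotone coupling step should note that the means are ordered because $\tvarphi$ is non-decreasing, a property guaranteed by Definition \ref{DefPsi} though not restated in the lemma.
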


\begin{proof}[Proof of Proposition \ref{propuniqteo2}:]
Let $\delta>0$ and $\lambda>0$ be such that $0\leq \varphi(z_2)-\varphi(z_1)\leq \lambda(z_2-z_1)$ for all
$-\delta\leq z_1\leq z_2\leq \delta$.
We define a function $\tvarphi$ that satisfies the conditions of Lemma
\ref{lemasandro}:
\[ \tvarphi(z)\pordef \tfrac{1}{\lambda}\phi(z)\,,\]
where
\begin{equation}\label{eq:defvarphitil} 
\phi(z)\pordef
\begin{cases} 
\lambda(z+\delta)+\varphi(-\delta) &\text{ if }z\in[-1,-\delta]\,,\\
\varphi(z) &\text{ if }z\in[-\delta,\delta]\,,\\
\lambda(z-\delta)+\varphi(\delta) &\text{ if }z\in[\delta,1]\,.
\end{cases}
\end{equation}

\iffalse
\begin{figure}[H]
\begin{center}
\begin{tikzpicture}[scale=4]
\pgfmathsetmacro{\delt}{0.2};
\pgfmathsetmacro{\incl}{5};
\pgfmathsetmacro{\dilat}{0.4};
\newcommand{\tangh}[1]{\dilat*(exp(#1)-exp((-1)*#1))/(exp(#1)+exp((-1)*#1))}
\draw[->](0,0)--(1.2,0) node[right]{$z$};
\draw[->](0,0)--(0,0.5);
\draw(1,0) node[below]{$+1$};
\draw(\delt,0) node[below]{$+\delta$};
\draw[dotted] (1,0)--(1,\dilat);
\draw[dashed] (\delt,0)--(\delt,\dilat*1.2);
\draw[very thick, domain=0:1, samples=50, lightgray] plot (\x,{\tangh{\incl*\x}})
node[black, right]{$\varphi(z)$};
\draw[very thick] (0,0)--(1.5*\delt,1.5*\incl*\dilat*\delt)
node[black, right]{$\varphi'(0)z$};
\pgfmathsetmacro{\incl}{1.2};
\begin{scope}[xshift=1.5cm]
\draw[->](0,0)--(1.2,0) node[right]{$z$};
\draw[->](0,0)--(0,0.5);
\draw(1,0) node[below]{$+1$};
\draw(\delt,0) node[below]{$+\delta$};
\draw[dotted] (1,0)--(1,\dilat*1.2);
\draw[dashed] (\delt,0)--(\delt,\dilat*1.2);
\draw[very thick, domain=0:1, samples=50, lightgray] plot (\x,{\tangh{\incl*\x}})
node[black, right]{$\tvarphi(z)$};
\draw[very thick] (0,0)--(1,\incl*\dilat)node[black, right]{$z$};
\draw[very thick, domain=0:\delt, samples=50] plot (\x,{\tangh{\incl*\x}});
\end{scope}
\end{tikzpicture}
\caption{After a vertical compression by a factor $\varphi'(0)^{-1}$.}
\end{center}
\end{figure}
\fi
Assume $\omega\in \{\infty\to k'\}$ for some large $k'$.
We fix $M>k'$ large, and to study $E_\omega[\xi_{k'}\dado \xi_{M}]$ we
construct the sequence $\xi_{k}$ for $k$ decreasing from $M$ to $k'$, 
coupled to another sequence $\txi_{k}$; 
$\xi_{k}$ will have its transition probability fixed by $\varphi$ and 
$h_{k}$, 
and $\txi_k$ will have its transition probability
fixed by $\tvarphi$ and
\[ \thh_{k}\pordef \lambda h_{k}\,.
\]
If $\lambda$ is large, we may need to
take $k'$ large enough so that $\thh_{k}\leq 1/2$ for all $k\geq k'$.
The processes $\xi_{k}$ and $\txi_{k}$ will be constructed along  with a sequence
$\gamma_{k}\in \{0,1\}$: if $\gamma_{k}=1$, then $\xi_{k}$ and $\txi_{k}$ are
still coupled; $\gamma_{k}=0$ means they have already decoupled.\\

For simplicity, we will continue denoting the coupling measure by $P_\omega$.
The construction is illustrated on the figure below.
We start with some fixed $\xi_{M}$. By \eqref{eq:BorelCantxismall}, we
can assume that $M$ is large enough in order to guarantee that $|\xi_{M}|\leq \delta$. Let then
$\txi_{M}\pordef \xi_{M}$, and $\gamma_{M}\pordef 1$. \\

Given $(\gamma_{k+1},\txi_{k+1},\xi_{k+1})$, 
$(\gamma_{k},\txi_{k},\xi_{k})$ is constructed as follows:
\begin{enumerate}
\item Sample $\txi_{k}$ as
an average of variables 
$\{\widetilde{X}_s=\pm,s\in \act(B^{k}(0))\}$, i.i.d. with
\[E_\omega[\widetilde{X}_s\dado \txi_{k+1}]=\thh_{k+1}\tvarphi(\txi_{k+1})\,.\]
\item If $\gamma_{k+1}=1$, set $\xi_{k}\pordef \txi_{k}$ and $\gamma_{k}\pordef
1_{\{|\xi_{k}|\leq \delta\}}$.
\item If $\gamma_{k+1}=0$, set $\gamma_{k}\pordef 0$, and
sample $\xi_{k}$ as an average of variables 
$\{X_s=\pm,s\in \act(B^{k}(0))\}$, i.i.d. with
\[ E_\omega[X_s\dado \xi_{k+1}]=h_{k+1}\varphi(\xi_{k+1})\,.\]
\end{enumerate}
By construction, $(\gamma_{k},\txi_{k},\xi_{k})$ is a Markov chain, and 
$\xi_{k}$ has the proper marginals. Namely, if 
$\gamma_{k+1}=0$ then 
\[
E_\omega[\xi_{k}\dado \xi_{k+1}]=h_{k+1}\varphi(\xi_{k+1})\,,
\]
and if $\gamma_{k+1}=1$, then
\begin{align*}
E_\omega[\xi_{k}\dado \xi_{k+1}]=
E_\omega[\txi_{k}\dado \txi_{k+1}]
&=\thh_{k+1}\tvarphi(\txi_{k+1})\\
&=\thh_{k+1}\tvarphi(\xi_{k+1})\\
&=h_{k+1}\varphi(\xi_{k+1})\,,
\end{align*}
where in the last line we used that $|\xi_{k+1}|\leq\delta$.

\begin{figure}[H]
\begin{center}
%% EXPORT 77%
\input{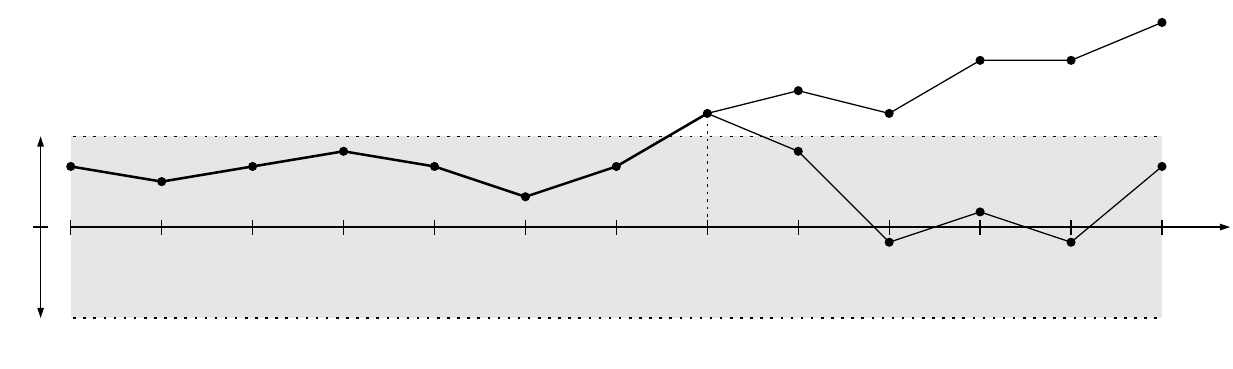_t}
\end{center}
\end{figure}
Let $D$ be the scale at which decoupling occurs: 
\[D\pordef \max\bigl\{k:M> k\geq k', \gamma_{k}=0\bigr\}\,.
\]
Clearly, $D$ is a stopping time for the chain $(\gamma_{k},\txi_{k},\xi_{k})$, and 
if $k\in \{k:M\geq k>D\}$, then 
$\xi_{k}=\txi_{k}$ and $|\xi_{k}|=|\txi_{k}|\leq \delta$.\\

Fix $k'<L<M$.  
On the one hand, proceeding as in \eqref{eq:estimprobdelta},
\begin{align*}
\bigl|E_\omega\bigl[\xi_{k'}, D\geq L\dado \xi_{M}\bigr] \bigr|
&\leq P_\omega(D\geq L\dado \xi_{M})\\
&\leq \sum_{k\geq L}P_\omega(|\xi_{k}|\geq \delta\dado \xi_{M})\leq  \sum_{k \geq
L}e^{-c n_1(k)}\,. 
\end{align*}
On the other hand, $D<L$ implies $\xi_{L}=\txi_{L}$ and so
\begin{align*}
E_\omega\bigl[\xi_{k'}, D< L\dado \xi_{M}\bigr]&=
E_\omega\Bigl[E_\omega\bigl[\xi_{k'}\dado \xi_{L}\bigr]
1_{\{D<L\}}\DDado \xi_{M}\Bigr]\\
&=
E_\omega\Bigl[f_\omega(\txi_{L})
1_{\{D<L\}}\DDado \xi_{M}\Bigr]\\
&=
E_\omega\bigl[f_\omega(\txi_{L})\Dado \xi_{M}\bigr]
+O\bigl(P_\omega(D\geq L\dado \xi_{M})\bigr)\,,
\end{align*}
where $f_\omega(x)\pordef E_\omega[\xi_{k'}\dado \xi_{L}=x]$. 
Now, the construction of $\txi$ was based on $\tvarphi$, so
by Lemma \ref{lemasandro}, 
\[E_\omega[\txi_{L}\dado\txi_{M}]\to 0\quad \text{ when }M\to\infty\,.\]

Since $\txi_{M}\pordef \xi_{M}$, $\txi_{L}$ has, under $P_\omega(\cdot \dado \xi_{M})$ 
in the limit $M\to \infty$, a symmetric law. 
But since  $f_\omega(-x)=-f_\omega(x)$, this implies 
\[ 
E_\omega\bigl[f_\omega(\txi_{L})\Dado \xi_{M}\bigr]
\to 0\quad \text{ as }M\to \infty\,.
\]
We have thus shown that for all $L>k'$,
\[ \limsup_{M\to \infty}\bigl|{E_\omega[\xi_{k'}\dado \xi_{M}]}\bigr|\leq 2\sum_{k\geq
L}e^{-cn_1(k)}\,.\qedhere
\]
\end{proof}

\begin{proof}[Proof of Lemma \ref{lemasandro}] 
{The coupling used below was kindly suggested by S. Gallo.}
We work with two different $g$-functions that have the same sequence $\thh_k$ but different
majority rules.
The first, $\tg$, is associated to
$\tvarphi$, which is $1$-Lipschitz $[-1,1]$.
The second, $\ovg$, is associated to $\ovvarphi\pordef \varphi_{ID}$:
\begin{figure}[H]
\begin{center}
\input{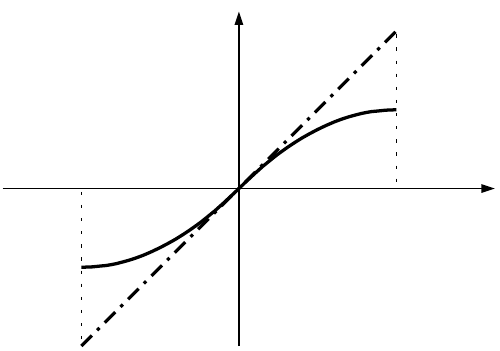_t}
\end{center}
\end{figure}
Uniformly in $z_1<z_2$,
\begin{equation}\label{eq:desigfundam1}
0\leq \tvarphi(z_2)-\tvarphi(z_1)\leq z_2-z_1\equiv \ovvarphi(z_2)-\ovvarphi(z_1)\,.
\end{equation}
Let $\tX$ (resp. $\ovX$) denote the process associated to $\tg$ (resp. $\ovg$). 
Using attractiveness and the notations of Section \ref{Sec_Cons_X},
\begin{align*}
E_\omega^{-,N}[\txi_{k'}]&\leq E_\omega[\txi_{k'}\dado \txi_M]\leq
E_\omega^{+,N}[\txi_{k'}]\,,
\end{align*}
where $N$ is chosen appropriately in function of $M$.
Since $\txi_{k'}$ is an average of identically distributed variables $\tX_s$, our aim will be to show that 
when $M\to\infty$,
\begin{align*}0\leq  E_\omega^{+,N}[\txi_{k'}]-E_\omega^{-,N}[\txi_{k'}]
=E_\omega^{+,N}[\tX_s]-E_\omega^{-,N}[\tX_s]\to 0\,.
\end{align*}
To bound this last difference, 
consider the coupling of $E_\omega^{+,N}$ and $E_\omega^{-,N}$ described in Section \ref{Sec_Cons_X},
which we here denote by $\mathsf{E}_\omega^{\pm,N}$. Since that coupling is maximal,
\begin{equation}\label{eq:usaraccopl} 
E_\omega^{+,N}[\tX_s]-E_\omega^{-,N}[\tX_s]
=2\mathsf{E}_\omega^{\pm,N}[1_{\{\tDelta_s=\binom{+}{-}\}}]\,.
\end{equation}
We will now use \eqref{eq:desigfundam1} to further couple the pair processes,
$\tDelta_s=\binom{\tX^2_s}{\tX^1_s}$ and $\ovDelta_s=\binom{\ovX^2_s}{\ovX^1_s}$. 
This coupling will contain the four processes associated to $\tg$ and $\ovg$, with
boundary conditions $+$ and$-$. 
The coupling will be such that there are more discrepancies between $\ovX^2$ and
$\ovX^1$ than there are between $\tX^2$ and $\tX^1$, in the following sense:
\begin{equation}\label{eq:accoplpares}
1_{\{\tDelta_s=\binom{+}{-}\}}
\leq 
1_{\{\ovDelta_s=\binom{+}{-}\}} \quad a.s.
\end{equation}

By definition, when $s\leq -N$, $\tDelta_s=\ovDelta_s=\binom{+}{-}$.
Let $U_t$, $t>-N$ be an i.i.d. sequence, each with $U_t$ uniform on $[0,1]$.
Assume all pairs, 
$\tDelta_s=\binom{\tx^2}{\tx^1}$ and $\ovDelta_s=\binom{\ovx^2}{\ovx^1}$,
have been sampled for all $s<t$ and that \eqref{eq:accoplpares} holds for all
$s<t$. Let $\tDelta_t$ (resp. $\ovDelta_t$) 
be defined as in \eqref{eq:defyt}, in which $A_t, B_t,C_t$ are replaced by the
corresponding $\tA_t, \tB_t,\tC_t$ (resp. $\ovA_t, \ovB_t,\ovC_t$).
(Obs: we are using the {same} variable $U_t$ to define $\tDelta_t$ and
$\ovDelta_s$.)
Then $\tDelta_t$ and $\ovDelta_t$ obviously have the correct distribution.
To verify that \eqref{eq:accoplpares} holds at time $t$, we first remind that
\begin{align*}
&\ovA_t= \bigl\{0\leq U_t<\ovg_t^\omega(+\dado (\ovx^2)_{-\infty}^{t-1}
)-\ovg_t^\omega(+\dado (\ovx^1)_{-\infty}^{t-1})\bigr\}\,,\\ 
&\tA_t=\bigl\{0\leq U_t<\tg_t^\omega(+\dado (\tx^2)_{-\infty}^{t-1} )-\tg_t^\omega(+\dado
(\tx^1)_{-\infty}^{t-1})\bigr\}\,. 
\end{align*}
Using the fact that $\ovX$ has more discrepancies than $\tX$, and
\eqref{eq:desigfundam1},
\begin{align*}
\ovg_t^\omega(+\dado
(\ovx^2)_{-\infty}^{t-1})-\ovg_t^\omega(+\dado (\ovx^1)_{-\infty}^{t-1})
&=\tfrac{\thh_{k_t}}{2}\Bigl\{
\ovvarphi\Bigl( \tfrac{1}{|S_t|}\sum_{s\in S_t}\ovx^2_s \Bigr)-
\ovvarphi\Bigl( \tfrac{1}{|S_t|}\sum_{s\in S_t}\ovx^1_s \Bigr)
\Bigr\}\\
&\geq \tfrac{\thh_{k_t}}{2}\Bigl\{
\ovvarphi\Bigl( \tfrac{1}{|S_t|}\sum_{s\in S_t}\tx^2_s \Bigr)-
\ovvarphi\Bigl( \tfrac{1}{|S_t|}\sum_{s\in S_t}\tx^1_s \Bigr)
\Bigr\}\\
&\geq\tfrac{\thh_{k_t}}{2}\Bigl\{
\tvarphi\Bigl( \tfrac{1}{|S_t|}\sum_{s\in S_t}\tx^2_s \Bigr)-
\tvarphi\Bigl( \tfrac{1}{|S_t|}\sum_{s\in S_t}\tx^1_s \Bigr)
\Bigr\}\\
&=
\tg_t^\omega(+\dado (\tx^2)_{-\infty}^{t-1})-\tg_t^\omega(+\dado
(\tx^1)_{-\infty}^{t-1})\,,
\end{align*}
which implies $\tA_t\subset \ovA_t$ almost surely.\\

With \eqref{eq:accoplpares} at hand, we go back to \eqref{eq:usaraccopl}:
\begin{align*}
E_\omega^{+,N}[\tX_s]-E_\omega^{-,N}[\tX_s]
&=2\mathsf{E}_\omega^{\pm,N}[1_{\{\tDelta_s=\binom{+}{-}\}}]\\
&\leq 2\mathsf{E}_\omega^{\pm,N}[1_{\{\ovDelta_s=\binom{+}{-}\}}]
=E_\omega^{+,N}[\ovX_s]-E_\omega^{-,N}[\ovX_s]\,.
\end{align*}

But since $\ovvarphi$ is purely linear, the explicit computation made at the
beginning of the section can be repeated, giving
\[
E_\omega^{\pm,N}[\ovX_s]
=\Bigl\{
\prod_{k=k'+1}^M \thh_k
\Bigr\}(\pm 1)
\to 0\quad\text{ when }M\to\infty\,.\qedhere
\]

\end{proof}
\section{Concluding remarks}
The analysis of the model was possible due to the Markovian structure of 
the sequence $\xi_{k}$, in particular to the relation (valid on $\{\infty\to k\}$)
\begin{equation}\label{eq_iterbasico}
E_\omega[\xi_{k}]=h_{k+1}E_\omega[\varphi(\xi_{k+1})]\,.
\end{equation}
We will give a simple heuristic argument that might shed some light on the proofs given
above, and on the role played by the continuity of $\varphi$ at the origin.\\

A mean field approximation consists in assuming that 
$\xi_{k+1}$ can be approximated by its mean: 
\[
\xi_{k+1}\simeq
E_\omega[\xi_{k+1}]\,.\]
This allows to transform
$ E_\omega[\varphi(\xi_{k+1})]\simeq \varphi\bigl(E_\omega[\xi_{k+1}]\bigr) $.
This approximation
is correct in exactly one case: when $\varphi$ is purely linear.\\

With the mean field approximation, one can transform \eqref{eq_iterbasico} into a 
deterministic toy model, in which
$\mu_{k}\pordef E_\omega[\xi_{k}]$ is a sequence 
satisfying the relation
\begin{equation}\label{eq:sistdineff} 
\mu_{k}=h_{k+1}\varphi(\mu_{k+1})\,. 
\end{equation}

We thus take some large integer $M$, fix some 
initial condition, $\mu_{M}$, and study the sequence $\mu_{M},\mu_{M-1},\dots,\mu_{k_*}$.
Since $\varphi(0)=0$, $0$ is always a fixed point for the dynamics.
In the case of a purely linear majority rule, $\varphi(z)=\lambda z$, the trajectory of
$\mu_{k+1}\to \mu_k$ is always attracted
towards the origin, independently of the initial condition. For example,  if $\mu_M>0$:

\begin{figure}[H]
\begin{center}
%% EXPORT 100%
\input{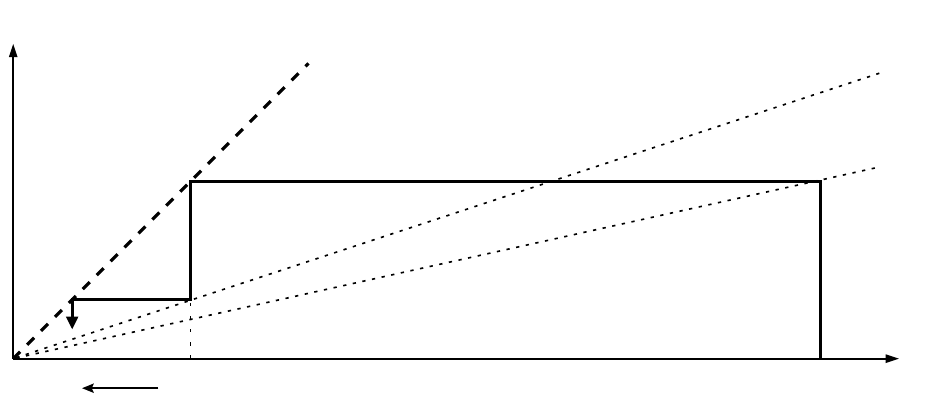_t}
\end{center}
\label{fig:iterreta}
\end{figure}

When $\varphi$ is Lipschitz near the origin, a coupling with a straight line
has shown that the same phenomenon occurs: for large enough $k$,
$z\geq 0$, the curve $z\mapsto h_{k+1}\varphi(z)$  
lies strictly \emph{below} the 
identity $z\mapsto z$, and any initial condition is also attracted towards the origin.\\

In the case of a pure majority rule, the mechanism changes: the trajectories of 
$\mu_{k+1}\to \mu_k=h_{k+1}\varphi_{PMR}(\mu_{k+1})$,
are repelled away from the origin, with a sign that depends on the sign of the initial
condition. For example, if $\mu_M>0$:
\begin{figure}[H]
\begin{center}
%% EXPORT 100%
\input{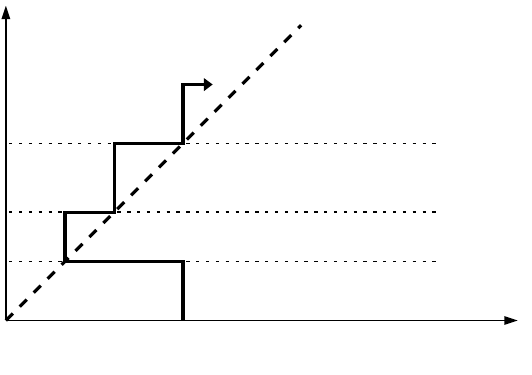_t}
\end{center}
\label{fig:iterrpure}
\end{figure}
This scenario was shown to hold for the BHS model, at least when
$\alpha$ was taken small enough. When $\alpha$ is large, 
fluctuations allow $\mu_k$ to change sign at any time, yielding
unicity.
One can therefore infer that a similar phenomenon can be used to
obtain non-unicity when
$\varphi$ is continuous but with $\varphi'(0)=\infty$.\\

\textbf{Aknowledgements:} We thank Sandro Gallo, Maria Eul\'alia Vares and Vladas
Sidoravicius for various useful discussions. We are in particular grateful to S. Gallo for
suggesting the coupling used in the proof of Lemma \ref{lemasandro}, which lead to a
substantial enhancement of a previous version of Theorem \ref{teo2}.

\appendix

\section{Computing the variation}\label{App:variation}
Consider $(x,\omega)$ and $(\ovx,\ovomega)$ such that
$x_i=\ovx_i$ and $\omega_i=\ovomega_i$
for all $-j\leq i\leq 0$. Let $k$ be such that $\beta_k\leq j<\beta_{k+1}$.
We temporarily denote by $B^m(0,\omega)$ and $B^m(0,\ovomega)$ the $m$-blocks 
associated to the environments $\omega$ and $\ovomega$, respectively. By Definition
\ref{defativos}, the events $\{0\in\act(B^m(0,\omega))\}$ and
$\{0\in\act(B^m(0,\ovomega))\}$ depends only on the interval
$[-\beta_{m+1}-\ell_m,0]$. We will consider two cases.
On the one hand, if $k_0(\omega)\leq k-2$, then
\[\beta_{k_0(\omega)+1}+\ell_{k_0(\omega)}\leq \beta_{k-1}+\ell_{k-2}\leq
\beta_{k}\,,\]
and $[-\beta_{k_0(\omega)+1}-\ell_{k_0(\omega)},0]$ belongs to the interval on which
$(x,\omega)$ and $(\ovx,\ovomega)$ coincide. It implies that
$k_0(\omega)=k_0(\ovomega)$ and that $\act(B^{k_0(\omega)}(0,\omega))=
\act(B^{k_0(\ovomega)}(0,\ovomega))$. Therefore,
\[g\big((x_0,\omega_0)\dado(x,\omega)_{-\infty}^{-1}\big)-g\big((\ovx_0,\ovomega_0
)\dado(\ovx,\ovomega)_{-\infty}^{-1}\big)=0.\]
On the other hand, if $k_{0}(\omega)\geq k-1$, by the previous case, $k_{0}(\ovomega)\geq
k-1$. Since $h_k$ is deacreasing, by Definition \eqref{defg},
\begin{align*}
\abs{g\big((x_0,\omega_0)\dado(x,\omega)_{-\infty}^{-1}\big)
&-g\big((\ovx_0,\ovomega_0)\dado(\ovx,\ovomega)_{-\infty}^{-1}\big)}\\
&\leq \max\{\abs{\psi_0^\omega(x_{-\infty}^{-1})},\abs{\psi_0^\omega(\ovx_{
-\infty}^{-1})}\}\\
& \leq \max\{h_{k_0(\omega)},h_{k_0(\ovomega)}\}\\
&\leq h_{k-1}.
\end{align*}
But if $h_{k-1}=\beta_{k-2}^{-\alpha}$, since $\beta_{k-2}\geq \tfrac12\beta_{k+1}^{(1+\epsilon_*)^{-3}}$,
\begin{align*}
\abs{g\big((x_0,\omega_0)\dado
(x,\omega)_{-\infty}^{-1}\big)-g\big((\ovx_0,\ovomega_0)\dado(\ovx,\ovomega)_{-\infty}^{-1}\big)}
\leq 2^\alpha j^{\tfrac{-\alpha}{(1+\epsilon_*)^3}} \,.
\end{align*}

\section{The uniqueness criterion}\label{app:unicidade}

The uniqueness criterion \eqref{eq:uniquenesscrit} is standard in attractive systems,
although usually used for translation invariant processes (which is not the case of
$P_\omega$).
See for example how it is used in Statistical Mechanics in \cite{Preston}, or in
\cite{Hulse} for $g$-measures.\\

We will use some notations and results from Section \ref{Sec_Cons_X}.
Let $\chi\pordef 
\{\pm \}^{\bZ}=\{x=(x_t)_{t\in\bZ}, x_t=\pm\}$ equipped with the $\sigma$-field
generated by cylinders.
A function $f:\chi\to \bR$ is local if it depends only on a finite
number of $x_t$s; we denote its support by $\supp(f)$.
We say $f$ is increasing if $f(x)\leq f(y)$ whenever $x_t\leq y_t$
for all $t$.\\

The probability measures on $\chi$ are entirely determined by their action on local
functions: if $E_1[f]=E_2[f]$ for all local function $f$, then $P_1=P_2$.
But a local function can always be represented as
\[ f(x)=\sum_{B\subset \supp(f)}\alpha_Bn_B\,,
\]
where $\alpha_B\in \bR$, 
$n_B\pordef \prod_{t\in B}n_t$, with $n_t\pordef \tfrac12(1+x_t)$.

\begin{lem}
If $E_\omega^+[X_t]=E_\omega^-[X_t]$ for all $t\in \bZ$, then
$P_\omega^+=P_\omega^-$.
\end{lem}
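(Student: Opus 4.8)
The plan is to reduce the equality of the two measures to the vanishing of single-site discrepancies, exploiting the representation of local functions in terms of the $n_B$. Since a probability measure on $\chi$ is determined by its values on local functions, and every local $f$ can be written as $f=\sum_{B\subset\supp(f)}\alpha_B n_B$, it suffices to prove that $E_\omega^+[n_B]=E_\omega^-[n_B]$ for every finite $B\subset\bZ$. Because each $n_t=\tfrac12(1+X_t)$ takes values in $\{0,1\}$ and is non-decreasing in $x_t$, the product $n_B=\prod_{t\in B}n_t$ is an increasing local function, so attractiveness (the stochastic domination $P_\omega^-\le P_\omega^+$ established through the monotone coupling of Section \ref{Sec_Cons_X}) already gives $E_\omega^-[n_B]\le E_\omega^+[n_B]$. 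The whole point is therefore to obtain the reverse inequality.

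For this I would use the monotone coupling. Passing to the limit $N\to\infty$ in the finite-volume couplings $\mathsf{P}_\omega^{\pm,N}$ of Section \ref{Sec_Cons_X} (extracting a weak limit by compactness of $\{\pm\}^\bZ$, the constraint being closed hence preserved), one obtains a coupling $\mathsf{P}$ of $P_\omega^+$ and $P_\omega^-$ carrying two processes $X^1\le X^2$ with $X^1\sim P_\omega^-$ and $X^2\sim P_\omega^+$. The key observation is the pointwise inequality
\[
n_B(X^2)-n_B(X^1)\le \sum_{t\in B}\bigl(n_t(X^2)-n_t(X^1)\bigr)\,.
\]
Indeed, since $X^1\le X^2$, the left-hand side lies in $\{0,1\}$ and equals $1$ only when $n_B(X^2)=1$ and $n_B(X^1)=0$; but then some $t\in B$ must have $X^2_t=+$ and $X^1_t=-$, which makes the corresponding summand on the right equal to $1$.

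Taking $\mathsf{E}$ on both sides then gives
\[
E_\omega^+[n_B]-E_\omega^-[n_B]\le \sum_{t\in B}\bigl(E_\omega^+[n_t]-E_\omega^-[n_t]\bigr)
=\tfrac12\sum_{t\in B}\bigl(E_\omega^+[X_t]-E_\omega^-[X_t]\bigr)=0\,,
\]
where the last equality is exactly the hypothesis. Combined with the domination bound this yields $E_\omega^+[n_B]=E_\omega^-[n_B]$ for every finite $B$, and hence $P_\omega^+=P_\omega^-$. The only genuinely delicate point is the construction of the global monotone coupling $\mathsf{P}$ of the two infinite-volume limits; this can in fact be bypassed by running the same estimate directly at finite volume $N$ with $\mathsf{P}_\omega^{\pm,N}$ and letting $N\to\infty$, using that $E_\omega^{\pm,N}[n_B]\to E_\omega^{\pm}[n_B]$ by the monotone definition of the limiting measures on local functions.
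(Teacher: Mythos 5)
Your proof is correct and is essentially the paper's argument: the pointwise inequality $n_B(X^2)-n_B(X^1)\le\sum_{t\in B}\bigl(n_t(X^2)-n_t(X^1)\bigr)$ for $X^1\le X^2$ is exactly the statement that $G=\sum_{t\in B}n_t-n_B$ is increasing, which is how the paper phrases it before invoking the stochastic domination \eqref{eq:monotaccopl}. The only difference is cosmetic — you rederive the monotone coupling explicitly (and correctly note the finite-volume workaround), whereas the paper applies the already-established monotonicity inequality directly.
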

\begin{proof}
By what was said above, it suffices to show that $E_\omega^+[n_B]=E_\omega^-[n_B]$ for
all finite $B\subset \bZ$.
Observe that $G=\sum_{t\in B}n_t-n_B$ is increasing. Using
\eqref{eq:monotaccopl} yields 
$E_\omega^+[G]\geq E_\omega^-[G]$, which can be written 
\[ \sum_{t\in B}
\bigl( E_\omega^+[n_t]-E_\omega^-[n_t] \bigr) \geq 
E_\omega^+[n_B]-E_\omega^-[n_B]\geq 0\,.
\]
But $E_\omega^+[n_t]-E_\omega^-[n_t]=E_\omega^+[X_t]-E_\omega^-[X_t]=0$.
\end{proof}

\begin{lem}
If $P_\omega^+=P_\omega^-$, then any other measure $P_\omega$ satisfying
\eqref{eq:distr_cond_X} coincides with $P_\omega^+$ and $P_\omega^-$.
\end{lem}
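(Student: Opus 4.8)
The plan is to sandwich $E_\omega[f]$ between the two extremal expectations for every increasing local $f$. Concretely, I will show that
\[
E_\omega^-[f]\leq E_\omega[f]\leq E_\omega^+[f]
\]
for all increasing local $f$; the hypothesis $P_\omega^+=P_\omega^-$ then collapses the outer terms, forcing $E_\omega[f]=E_\omega^+[f]$ on increasing local functions. Since every local $f$ can be written as $f=\sum_{B\subset\supp(f)}\alpha_B n_B$ with each $n_B$ increasing, this equality promotes to all local $f$, and because a measure on $\chi$ is determined by its action on local functions, $P_\omega=P_\omega^+=P_\omega^-$.

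The heart of the matter is to identify the conditional law of $P_\omega$ given the remote past with the finite-volume measures $P_\omega^{\eta,N}$ of Section \ref{Sec_Cons_X}. Fix an increasing local $f$ and take $N$ large enough that $\supp(f)\subset(-N,\infty)$. Set $\cF_{\le -N}\pordef\sigma(X_s,s\le -N)$ and let $\eta\pordef X_{-\infty}^{-N}$, which is $\cF_{\le -N}$-measurable. I claim that for every $s>-N$ and every $x$,
\[
P_\omega\bigl([x]_{-N+1}^{s}\Dado\cF_{\le -N}\bigr)=P_\omega^{\eta,N}\bigl([x]_{-N+1}^{s}\bigr)\quad\text{a.s.}
\]
This is proved by induction on $s$. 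For $s=-N+1$ it is precisely \eqref{eq:distr_cond_X} applied at time $-N+1$, since $\cF_{\le -N}=\sigma(X_{-N},X_{-N-1},\dots)$. For the inductive step, conditioning first on $\cF_{\le s-1}$ and using the tower property,
\[
P_\omega\bigl([x]_{-N+1}^{s}\Dado\cF_{\le -N}\bigr)=E_\omega\Bigl[1_{[x]_{-N+1}^{s-1}}\,g_s^\omega\bigl(x_s\dado X_{-\infty}^{s-1}\bigr)\DDado\cF_{\le -N}\Bigr]\,,
\]
where I used \eqref{eq:distr_cond_X} at time $s$. On the event $[x]_{-N+1}^{s-1}$ one has $X_{-\infty}^{s-1}=x_{-N+1}^{s-1}\eta_{-\infty}^{-N}$, so $g_s^\omega\bigl(x_s\dado X_{-\infty}^{s-1}\bigr)$ equals the $\cF_{\le -N}$-measurable quantity $g_s^\omega\bigl(x_s\dado x_{-N+1}^{s-1}\eta_{-\infty}^{-N}\bigr)$; pulling it out and invoking the induction hypothesis yields the claimed product. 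I expect this bookkeeping — checking that the one-sided specification \eqref{eq:distr_cond_X} genuinely produces $P_\omega^{\eta,N}$ as a regular conditional distribution — to be the only slightly delicate point; the rest is soft.

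Granting the identity, the conclusion is immediate. Because $\supp(f)\subset(-N,\infty)$, it gives $E_\omega[f\Dado\cF_{\le -N}]=E_\omega^{\eta,N}[f]$ a.s. Since $\eta^-\le\eta\le\eta^+$ pointwise, where $\eta^\#\equiv\#$, the coupling monotonicity \eqref{eq:monotaccopl} gives the deterministic bounds
\[
E_\omega^{-,N}[f]\leq E_\omega^{\eta,N}[f]\leq E_\omega^{+,N}[f]\,,
\]
and integrating over $\eta$ under $P_\omega$ yields $E_\omega^{-,N}[f]\leq E_\omega[f]\leq E_\omega^{+,N}[f]$. Letting $N\to\infty$, using $E_\omega^{+,N}[f]\searrow E_\omega^+[f]$ and, by symmetry, $E_\omega^{-,N}[f]\nearrow E_\omega^-[f]$, I obtain $E_\omega^-[f]\leq E_\omega[f]\leq E_\omega^+[f]$. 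With $P_\omega^+=P_\omega^-$ the two ends coincide, so $E_\omega[f]=E_\omega^+[f]$ for every increasing local $f$, and the reduction described in the first paragraph completes the proof.
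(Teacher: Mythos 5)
Your proof is correct, and it arrives at the same sandwich $E_\omega^-[n_B]\leq E_\omega[n_B]\leq E_\omega^+[n_B]$ that the paper's proof establishes, but by a genuinely different intermediate step. The paper exploits the model-specific structure: for $k$ large it conditions on $\xi_{k}$, uses that the law of $X$ on a finite region given the past factors through $\xi_{k}$ (the content of the discussion around $\{\infty\to k\}$), and bounds $E_\omega[n_B\dado \xi_{k}]$ by $E_\omega[n_B\dado \xi_{k}=+1]\equiv E_\omega^{+,N(k)}[n_B]$. You instead condition on the full past $\cF_{\le -N}$, identify the resulting conditional law with the finite-volume measure $P_\omega^{\eta,N}$ by iterating \eqref{eq:distr_cond_X}, and then vary the boundary condition $\eta$ monotonically between $\eta^-\equiv -$ and $\eta^+\equiv +$ via \eqref{eq:monotaccopl} before letting $N\to\infty$. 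Your route is the classical argument for attractive one-sided specifications: it is more general (it nowhere uses blocks, goodness of blocks, or the a.s.\ finiteness of $K(\omega)$), and it makes explicit a point the paper leaves implicit, namely that \eqref{eq:distr_cond_X} genuinely produces $P_\omega^{\eta,N}$ as a regular conditional distribution given $\cF_{\le -N}$ --- the induction you sketch is exactly the right bookkeeping, and it is also what is needed to justify the paper's identification $E_\omega[n_B\dado \xi_{k}=+1]\equiv E_\omega^{+,N(k)}[n_B]$. What the paper's version buys in exchange is brevity, since the Markov structure of $\xi_{k}$ has already been set up for the main theorems. Your final reduction from increasing local functions to all local functions via the decomposition $f=\sum_B\alpha_B n_B$ is the same as the paper's and is correct.
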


\begin{proof}
Once again, we need only consider local functions of the form $n_B$. If
$B\subset [a,b]$, we can take $k$ large enough so
that $\act(B^k(0))\subset (-\infty,a\wedge 0)$.
Then, 
\[ 
 E_\omega[n_B]=E_\omega\bigl[E_\omega[n_B\dado \xi_{k}]\bigr]\,.
\]
But, since $n_B$ is increasing,
\begin{align*}
E_\omega[n_B\dado \xi_{k}]  \leq E_\omega[n_B\dado \xi_{k}=+1]
\equiv E_\omega^{+,N(k)}[n_B]\,,
\end{align*}
for some suitable $N(k)$. But by the definition of $P_\omega^+$,
\[\lim_{k\to\infty}E_\omega^{+,N(k)}[n_B]=E^+_\omega[n_B]\,.
\]
We therefore have $E_\omega[n_B]\leq E_\omega^{+}[n_B]$.
In the same, way, $E_\omega^-[n_B]\leq E_\omega[n_B]$.
As a consequence, $E_\omega[n_B]=E_\omega^+[n_B]=E_\omega^-[n_B]$, and the same
extends to all local function $f$.
\end{proof}

\bibliographystyle{plain}
\bibliography{DFbib}

\end{document}